\documentclass[a4paper,11pt, english]{amsart}

\usepackage{amssymb} 
\usepackage{amsfonts}
\usepackage{amsmath}

\usepackage{amsthm}
\usepackage[utf8]{inputenc}
\usepackage{pgf}
\usepackage{graphicx}
\usepackage{tikz}
\usepackage{tikzsymbols}
\usepackage{pdfpages}
\usepackage[all]{xy}
\usepackage{verbatim,enumerate}
\usepackage{hyperref}
\usetikzlibrary{patterns}
\usepackage[yyyymmdd,hhmmss]{datetime}
\usepackage{mathtools}

\addtolength{\textwidth}{4cm}
\addtolength{\oddsidemargin}{-2cm}
\addtolength{\evensidemargin}{-2cm}

\newtheorem{thm}{Theorem}[section]
\newtheorem{lemma}[thm]{Lemma}
\newtheorem{prop}[thm]{Proposition}
\newtheorem{cor}[thm]{Corollary}
\newtheorem{defi}[thm]{Definition}
\newtheorem*{pb}{Problem}
{\theoremstyle{definition}
\newtheorem{exa}[thm]{Example}
\newtheorem{rem}[thm]{Remark}}

\newcommand{\R}{\mathbb{R}}
\newcommand{\RR}{\mathbb{R}}

\newcommand{\Z}{\mathbb{Z}}
\newcommand{\CP}{\mathbb{C}P}

\newcommand{\N}{\mathbb{N}}
\newcommand{\TP}{\mathbb{T}P}

\newcommand{\C}{\mathbb{C}}

\renewcommand{\epsilon}{\varepsilon}

\renewcommand{\div}{\mathop{\mathrm{div}}}

\newcommand{\Ed}{\text{Edge}}
\newcommand{\val}{\text{val}}
\newcommand{\td}{{\text{''}}}
\newcommand{\tg}{{\text{``}}}

\newcommand{\F}{\mathcal{F}}

\newcommand{\xdim}{m}
\newcommand{\xcod}{k}
\newcommand{\damb}{n}

\title{Planar tropical cubic curves of any genus, and higher dimensional generalisations}
\author{Benoît Bertrand} 
\address{Benoît Bertrand, Institut de mathématiques de Toulouse -- IUT de Tarbes, France}
\email{benoit.bertrand@math.univ-toulouse.fr}

\author{Erwan Brugallé}
\address{Erwan Brugallé, CMLS, École polytechnique, CNRS, Université
  Paris-Saclay, 91128 Palaiseau Cedex, France; Université de Nantes, Laboratoire de
  Mathématiques Jean Leray, 2 rue de la Houssinière, F-44322 Nantes Cedex 3,
France}
\email{erwan.brugalle@math.cnrs.fr}

\author{Lucía López de Medrano}
\address{Lucía López de Medrano, Unidad Cuernavaca del Instituto de Matem\'aticas, UNAM. Mexico.}
\email{lucia.ldm@im.unam.mx}

\date{\today \mbox{ at } \currenttime}

\dedicatory{À la mémoire de notre ami Jean-Jacques Risler, à qui nous n'avons pas eu le temps de raconter ces incongruités.}

\subjclass[2010]{Primary 14T05, 14F45; Secondary 52B20, 52B05}
\keywords{Topology of tropical varieties, tropical Hodge numbers, tropical homology, floor composition}

\begin{document}

\begin{abstract}
  We study the maximal
  values of Betti numbers of tropical subvarieties  of a
  given dimension and degree in $\TP^n$. We provide a lower estimate for the maximal value of the top Betti number, which
  naturally depends on the dimension and degree, but also  on the 
  codimension. In particular, when the codimension is large enough,
  this lower estimate is larger than the 
   maximal value of the corresponding Hodge number of complex algebraic
  projective varieties of the given dimension and degree.
In the case of surfaces, we extend our study to   all tropical
  homology groups.  As a special case, we prove that there exist 
  planar tropical cubic curves of genus $g$ for any non-negative integer $g$.
 \end{abstract} 
\maketitle
\tableofcontents

Throughout the text, we fix a field $\mathbb K$. 
 The $j^{\mbox{th}}$ Betti number $b_j(X)$
 of a topological space $X$ is the dimension of the $j^{\mbox{th}}$ homology group $H_j(X;\mathbb K)$ of $X$ with coefficients in $\mathbb K$.
Otherwise stated, we refer to \cite{BIMS15} for precise
definitions of notions from tropical geometry needed in this text.

\section{Introduction}

\subsection{Curves}
A tropical curve $C$ in $\R^n$ is a piecewise linear graph with
finitely many vertices such that
(see for example \cite{BIMS15,MikRau19}):
\begin{itemize}
\item each edge $e$ of $C$ is equipped with an integer weight
$w_e\in \Z_{>0}$,
and has a directing vector in $\Z^n$;
\item at each vertex $v$ of $C$, adjacent to the edges
$e_1,\cdots,e_l$, the following \emph{balancing condition} is
satisfied:
\[
\sum_{i=1}^l w_{e_i}u_{e_i}=0,
\]
where $u_{e_i}$ is the primitive integer directing vector of $e_i$
pointing away from $v$.
\end{itemize}
Some examples of tropical curves in $\R^2$ are depicted in
Figure \ref{fig:balancing}. A tropical curve is said to be
of \emph{degree} $d$ if 
\[
d=\sum\limits_{e}w_{e}\ {\max}_{j=1}^n\{0,u_{e,j}\},
\]
where the sum ranges over all unbounded edges $e$ of $C$, and
$u_e=(u_{e,1},\cdots, u_{e,n})$ is a primitive integer directing vector 
of $e$ pointing toward infinity, see  \cite{BIMS15}.
\begin{figure}[h]
\centering
\begin{tabular}{ccccc}
\includegraphics[width=3cm, angle=0]{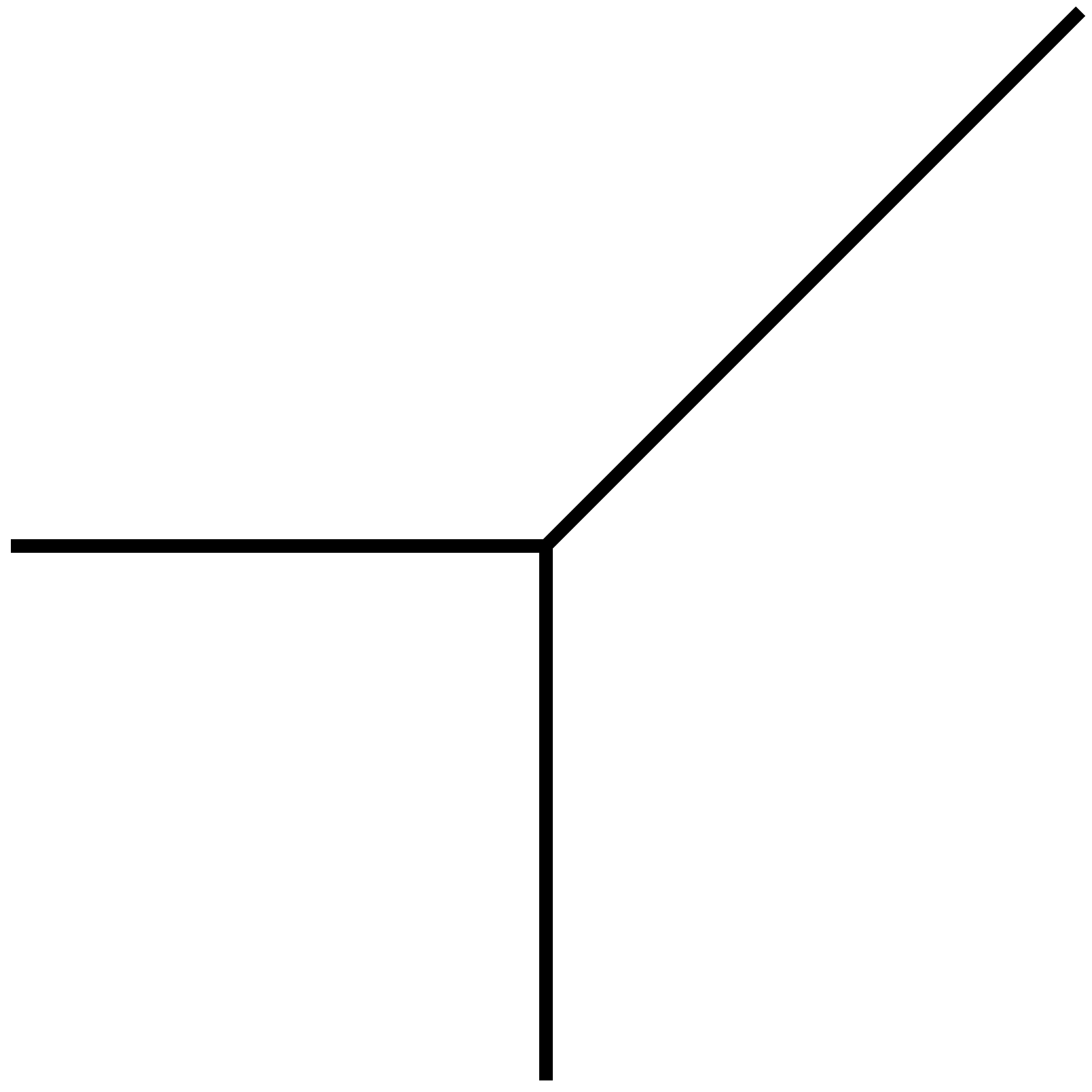} 
&\includegraphics[width=3cm, angle=0]{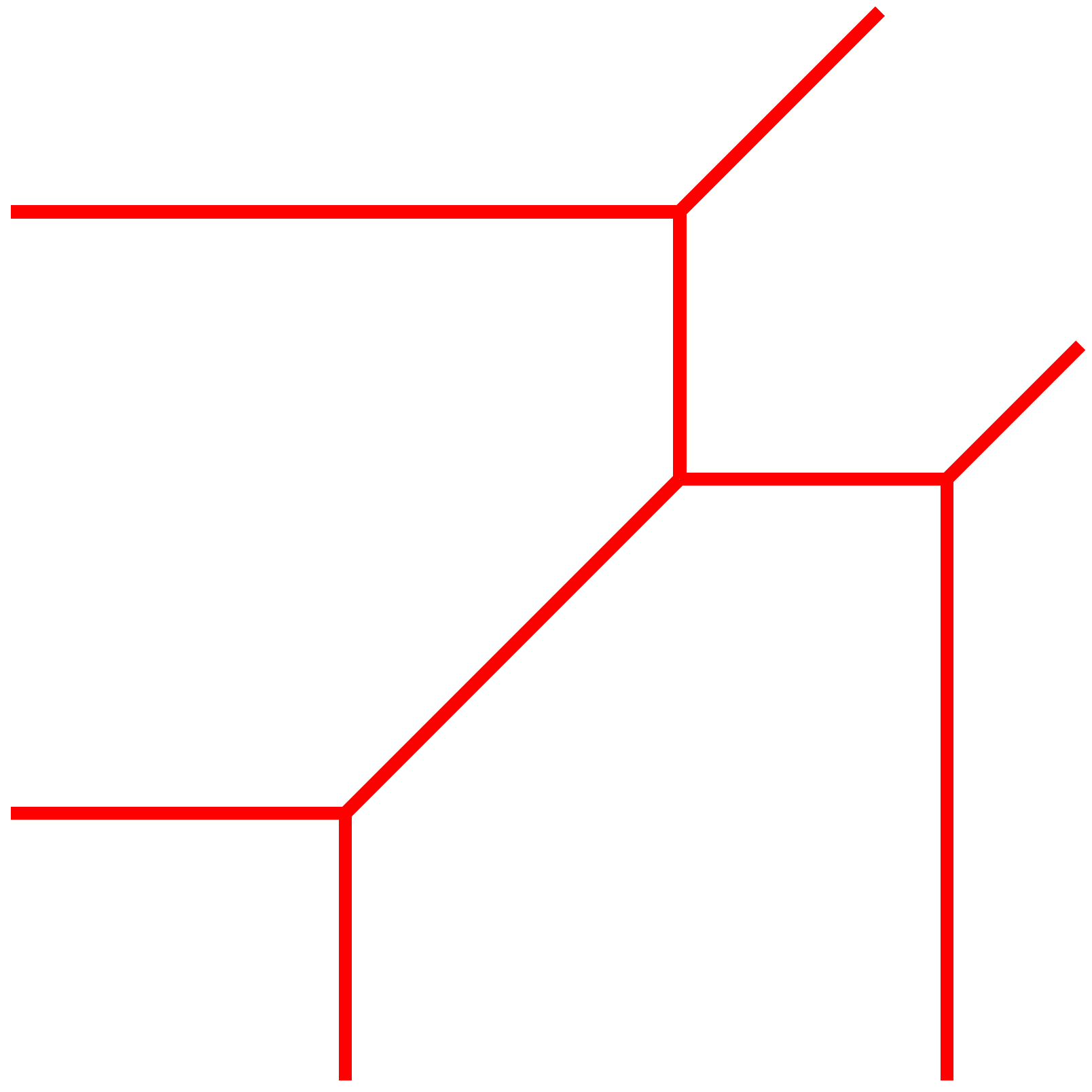}
&\includegraphics[width=3cm, angle=0]{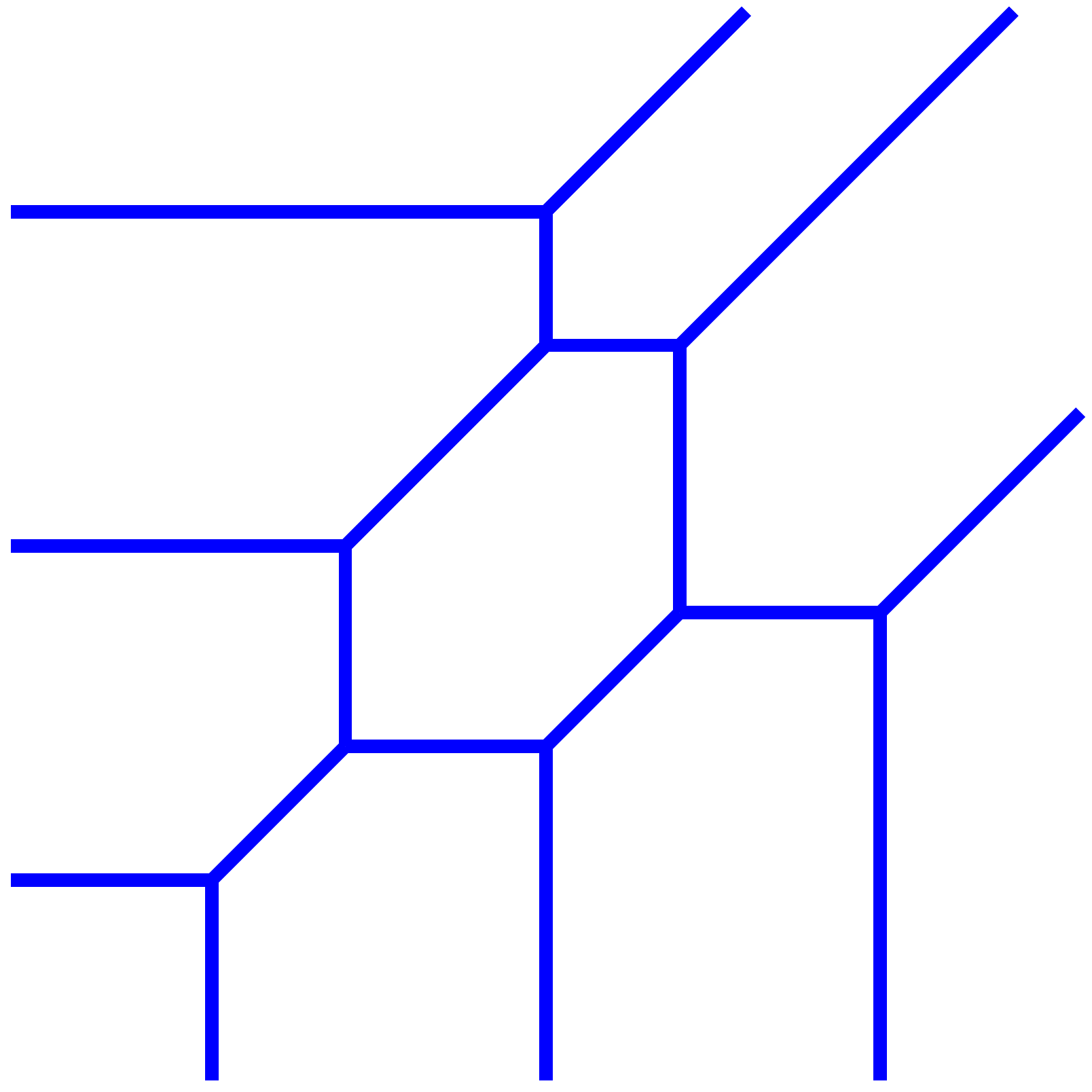}
&\includegraphics[width=3cm, angle=0]{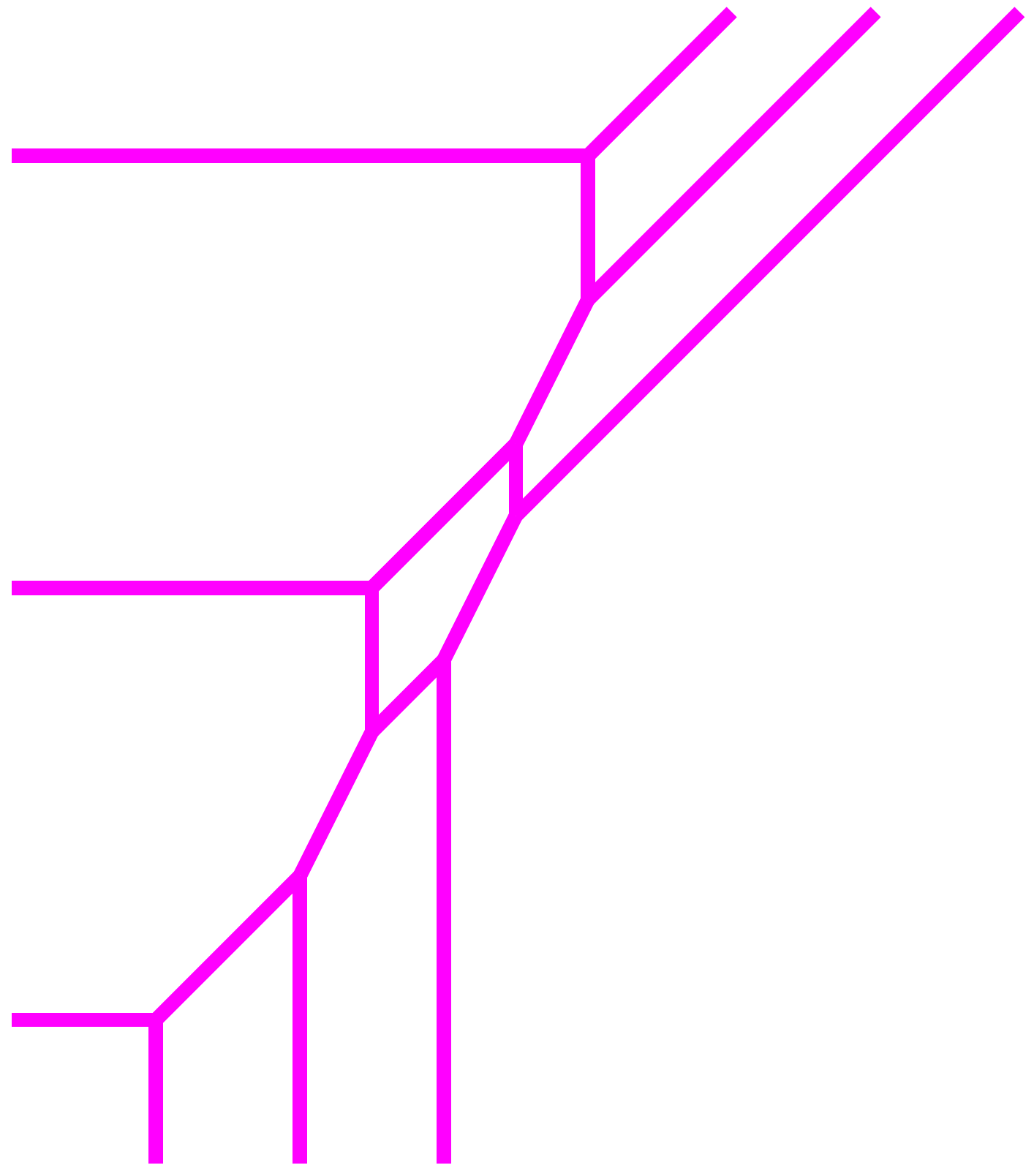}

\\
\\   A tropical line & A  tropical conic & A  tropical cubic &
Another  tropical cubic
\end{tabular}
\label{fig:balancing}
\caption{Examples of tropical curves in $\R^2$. All unbounded edges
have integer direction $(-1,0)$, $(0,-1)$, or $(1,1)$ toward infinity,
and all edges have weight $1$.}
\end{figure}
Tropical curves appeared in several mathematical and physical
contexts \cite{Aharony,Berg71,BiGr,V9,Mik1}, in particular in relation
with complex and non-Archimedean amoebas \cite{GKZ,Mik8,EKL06}. 

Figure \ref{fig:balancing} suggests a 
relation between the topology of
tropical curves  and of plane algebraic curves. 
Indeed by  \cite[Proposition 2.10]{Mik12}, the first
Betti number of a tropical curve in $\R^2$ of degree $d$ is at most
\[
\frac{(d-1)\cdot (d-2)}{2},
\]
and equality holds in the case of so-called \emph{non-singular}
tropical curves (i.e. tropical curves in $\R^2$ of degree $d$ with
exactly $d^2$ vertices). It is standard that the same is true
regarding  the geometric genus of an algebraic
curve of degree $d$ in the projective plane, see for example \cite[Chapter III 6.4]{Sha}. 
Such similarity led to use the expression 
``genus of a tropical curve'' in place of
``first Betti number of a tropical curve''.

Using linear projections, one easily sees that the above upper bound for the
geometric genus of an algebraic curve in the projective plane is also
an upper bound for the
geometric genus of an algebraic curve of degree $d$ in \emph{any}
projective space. The starting observation of this paper is that 
analogous statement does not hold in tropical geometry: 
there exist tropical curves of degree $d$ in $\R^n$, with $n\ge 3$, with
genus greater than the upper bound for tropical curves in
$\R^2$.
The first example is the tropical cubic curve of genus 2 in
$\R^3$ depicted in
Figure \ref{fig:cubicgenus2}.  
\begin{figure}[h!]
\begin{center}

  \includegraphics[height=9cm, angle=0]{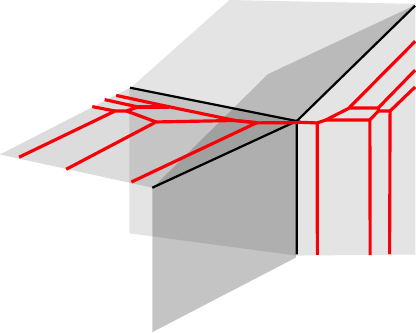}
      \put(-100,145){\textcolor{red}{$3$}}

\end{center}
\caption{A tropical cubic curve of genus 2 in $\R^3$. All unbounded edges
have integer direction $(-1,0,0)$, $(0,-1,0)$, $(0,0,-1)$, or $(1,1,1)$
toward infinity; all edges have weight 1, except the one  with weight
3 indicated close to it.} 

\label{fig:cubicgenus2}
\end{figure}
Moreover, this curve is
contained in a polyhedral complex $L$ of dimension 2: one vertex from
which emanate fours rays in the directions  $(-1,0,0)$, $(0,-1,0)$,
$(0,0,-1)$, and $(1,1,1)$, and six faces of dimension two generated by
each pair of rays. It turns out that $L$ is a \emph{tropical plane} in
$\R^3$, i.e. a tropical surface of degree 1  (see below).
Hence Figure \ref{fig:cubicgenus2} exhibits a rather surprising (to us) example
of a genus 2 tropical cubic in a tropical plane. We generalise this
observation in next Theorem, where a tropical curve in $\R^n$ is
called \emph{planar} if it is contained in a tropical plane.

\begin{thm}\label{thm:curve}
For any integers $d\ge 1$ and $n\ge 2$, there exists a planar tropical curve
of degree $d$ in $\R^n$ with genus
\[
(n-1)\cdot \frac{(d-1)\cdot (d-2)}{2}.
\]
\end{thm}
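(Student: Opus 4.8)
The plan is to realise the curve inside the standard tropical plane $L\subset\R^n$. Recall that $L$ is the two-dimensional fan whose rays are directed by $u_0=(1,\dots,1)$ and $u_i=-e_i$ for $1\le i\le n$, and whose two-dimensional faces are the cones $\langle u_i,u_j\rangle$ for $0\le i<j\le n$; the case $n=3$ is the plane of Figure \ref{fig:cubicgenus2}. The key structural remark I would start from is that the linear projection $\pi\colon\R^n\to\R^{n-1}$ forgetting the last coordinate maps $L$ onto the analogous tropical plane $L'\subset\R^{n-1}$, collapses the ray $u_n=-e_n$ to the origin, and realises $L$ as the tropical modification of $L'$ along its one-skeleton $\ell'=\bigcup_{i=0}^{n-1}\R_{\ge0}u_i'$. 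Since $u_0'+\dots+u_{n-1}'=0$, this $\ell'$ is a balanced one-dimensional fan, i.e.\ a tropical line (degree $1$) in $L'$. We thus obtain a tower $\R^2=L_2\subset L_3\subset\dots\subset L_n=L$, and the theorem reduces to an inductive statement: construct in each $L_k$ a tropical curve $C_k$ of degree $d$ whose genus exceeds that of $C_{k-1}$ by exactly $\frac{(d-1)(d-2)}{2}$.

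For the base case $n=2$ we take $C_2$ to be any non-singular tropical curve of degree $d$ in $\R^2$; by \cite[Proposition 2.10]{Mik12} it has genus exactly $\frac{(d-1)(d-2)}{2}$, the number of interior lattice points of its Newton triangle. Before turning to the inductive step I would record the bookkeeping fact that makes the degree easy to control: in the degree formula the contribution of an unbounded edge directed by $-e_i$ is $\max_j\{0,(-e_i)_j\}=0$. Hence we are free to create as many unbounded edges in the coordinate directions $-e_i$ as we wish without affecting the degree, and in particular the vertical direction $u_n=-e_n$ introduced by the modification is ``degree-free''. This is what will let the genus grow while the degree stays equal to $d$.

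The inductive step is the heart of the argument. Starting from $C_{k-1}\subset L_{k-1}$, I would first lift it to the graph of the modifying function over the modification $L_k\to L_{k-1}$, which is homeomorphic to $C_{k-1}$ and hence does not yet change the genus; then, using the freedom above, I would modify this lift in a neighbourhood of the modification wall (the new sheets $\langle u_k,u_i\rangle$) so that the curve dips into the new sheets and comes back, weaving an entire copy of the cycle pattern of a non-singular plane curve of degree $d$. Concretely this amounts to prescribing, by a ``floor composition'', a fresh layer of $\frac{(d-1)(d-2)}{2}$ bounded two-cells whose boundaries are new independent cycles; all the additional unbounded edges point in degree-free directions $-e_i$, so $\deg C_k=\deg C_{k-1}=d$ and $C_k\subset L_k$ remains planar by construction. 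At each vertex created in this process one must check the balancing condition using the explicit primitive generators of the faces of $L$; this is a finite local verification.

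Finally I would compute the genus of $C_n$ from its Euler characteristic, $b_1(C_n)=\#\{\text{bounded edges}\}-\#\{\text{vertices}\}+1$, or equivalently by exhibiting a basis of $H_1$: the $n-1$ layers contribute $n-1$ disjoint families of $\frac{(d-1)(d-2)}{2}$ independent cycles each, while the connections between consecutive layers are tree-like and create no further cycles, giving total genus $(n-1)\frac{(d-1)(d-2)}{2}$. The main obstacle I anticipate is precisely this genus count: one must design the excursions into the new sheets so that they produce exactly $\frac{(d-1)(d-2)}{2}$ new independent cycles, no fewer and no more, while simultaneously respecting the balancing condition on the rigid face structure of $L$ and keeping the whole curve connected and of degree $d$.
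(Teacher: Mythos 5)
Your high-level strategy coincides with the paper's: the paper also realises the curve inside a tropical plane built from $\R^2$ by iterated tropical modifications, and also proceeds one codimension at a time, adding genus $\frac{(d-1)\cdot(d-2)}{2}$ per step (this is Theorem \ref{thm:main curve}, which contains Theorem \ref{thm:curve}). But your proposal stops exactly where the real work begins. The inductive step --- ``modify this lift in a neighbourhood of the modification wall so that the curve dips into the new sheets \dots weaving an entire copy of the cycle pattern of a non-singular plane curve'' --- is never constructed, and you yourself flag it as the main obstacle. This is a genuine gap, not a deferred detail: you cannot attach ``excursions'' to the graph lift of $C_{k-1}$ at a point without violating the balancing condition there (adding an edge at a point of the lift forces the lift itself to bend, i.e.\ to be redesigned near the wall); the attachment locus must lie on the image of the divisor of the modification; and every excursion attached at two or more points creates interface cycles that enter the genus count. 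Nothing in your outline shows that these constraints can be met simultaneously, let alone with \emph{exactly} $\frac{(d-1)\cdot(d-2)}{2}$ new independent cycles, connectedness, and degree $d$.

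The way the paper fills this gap is worth comparing with your bookkeeping, because it is different in a crucial respect. Lemmas \ref{lem:constr curve 1} and \ref{lem:constr curve 2} first construct \emph{degenerate} plane curves $\widetilde C_d$, $\overline C_d$ of genus $\frac{(d-1)\cdot(d-2)}{2}$ whose whole degree is concentrated in a single unbounded edge of weight $d$ (in direction $(-1,0)$, resp.\ also one in direction $(1,1)$), rather than starting from a non-singular curve. The \texttt{Gluing} routine in the proof of Theorem \ref{thm:main curve} then truncates the weight-$d$ edge of the previously built curve and the weight-$d$ edge of a fresh plane curve, embeds both pieces one dimension higher (via $(x_1,\dots,x_n)\mapsto(x_1,\dots,x_n,0)$ and $(x,y)\mapsto(x,\dots,x,y)$), and joins them along a \emph{single} bounded segment of weight $d$. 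Because the two truncated edges have opposite primitive directions and the same weight $d$, balancing at the junction is automatic; because the interface is one segment, the genera simply add; and because the new layer brings its own degree-carrying ends in direction $(1,\dots,1,1)$ while the old degree-carrying end has been cut, the total degree remains $d$. In particular, your assertion that all edges added at each step can point in degree-free directions $-e_i$ is not how the construction can work: the degree-carrying ends must migrate to the newest layer at each step. To complete a proof along your lines you would have to reproduce this ``cut the weight-$d$ edge and splice'' mechanism (or an equivalent explicit balanced local model); as written, the statement has been reduced to an unproved claim.
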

Generalising Figure \ref{fig:cubicgenus2}, there exists therefore a
planar tropical cubic curve of any given genus $g\ge0$.
Note that Theorem \ref{thm:curve} disproves in particular 
\cite[Conjecture 4.5]{Yu14}.

\medskip
T.~Yu proved in \cite[Proposition 4.1]{Yu14}
that a tropical curve of degree $d$ in $\R^n$
 has no more than $2 d^2\cdot(n-1)^2$ vertices, which implies 
that the
genus of such  tropical curve is bounded from
above by a constant depending only on $d$ and $n$. Nevertheless,
to our knowledge the following question remains open in general.
\begin{pb}
What is the maximal possible genus of a tropical curve of degree $d$
in $\R^n$?
\end{pb}
In the case of planar tropical curves in $\R^3$, we can
``almost''
prove that  Theorem~\ref{thm:curve} is optimal. 
\begin{thm}\label{thm:genus curve}
If $C\subset \R^3$ is a
planar tropical curve of degree $d$ with
$4d$ unbouded edges, then $C$ has genus at most $(d-1)\cdot(d-2)$.
\end{thm}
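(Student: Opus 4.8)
The plan is to bound the genus $g=b_1(C)$ by a vertex count, and to control that count by projecting $C$ onto plane tropical curves. First I would record the combinatorics forced by the hypotheses. Writing $u_1=(-1,0,0)$, $u_2=(0,-1,0)$, $u_3=(0,0,-1)$, $u_4=(1,1,1)$ for the four ray directions of the ambient tropical plane $L$, the global balancing $\sum_e w_e u_e=0$ over the unbounded edges, together with the degree formula, forces the $4d$ ends to be exactly $d$ weight-one rays in each direction $u_i$. Since perturbing $C$ within $L$ to a trivalent curve with the same ends can only increase $b_1$, for an upper bound I may assume $C$ trivalent; then $3V=2E_b+4d$, so $g=E_b-V+c=\tfrac{V}{2}-2d+c$. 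Thus it suffices to prove the vertex bound $V\le 2d^2-2d+2$.

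For each $k\in\{1,2,3\}$ let $p_k\colon\R^3\to\R^2$ be the projection forgetting the $k$-th coordinate. The key elementary computation is that $p_k$ sends $u_1,\dots,u_4$ to $(-1,0)$, $(0,-1)$, $(1,1)$ with $p_k(u_k)=0$: the $u_k$-direction collapses and the remaining three land on the three rays of a standard tropical line. Consequently $p_k(L)=\R^2$ and $p_k(C)$ is a plane tropical curve whose ends are the $3d$ images of the non-collapsed rays, hence of degree $d$. By \cite[Proposition 2.10]{Mik12} each $p_k(C)$ then has genus at most $\binom{d-1}{2}=\tfrac{(d-1)(d-2)}{2}$ and at most $d^2$ vertices. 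Moreover $p_k$ is injective on a face $\sigma_{ij}=\mathrm{cone}(u_i,u_j)$ of $L$ precisely when $k\notin\{i,j\}$, in which case the trivalent vertices of $C$ lying in that face map bijectively to vertices of $p_k(C)$.

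The combinatorial heart of the argument is to turn these three shadows into one sharp estimate. I would organise the vertices of $C$ by the open face of $L$ containing them, setting $N_{ij}=\#(C\cap\mathrm{int}\,\sigma_{ij})$, and note that every face is seen injectively by at least one $p_k$, so that the three inequalities $\#\mathrm{vert}(p_k(C))\le d^2$ jointly control $\sum N_{ij}$, together with the vertices on the rays and at the central vertex. The homological counterpart of this injectivity is the natural claim that $H_1(C)\to\bigoplus_k H_1(p_k(C))$ has trivial kernel: a cycle killed by all three projections would be supported in the intersection of the three stars $\sigma_{1\ast}\cap\sigma_{2\ast}\cap\sigma_{3\ast}$, which contains no $2$-face (a face has only two rays) and is therefore one-dimensional, hence carries no cycle.

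The hard part will be extracting the sharp constant $2\binom{d-1}{2}=(d-1)(d-2)$ rather than the $3\binom{d-1}{2}$ produced by naively summing over the three projections. The loss comes from the faces $\sigma_{14},\sigma_{24},\sigma_{34}$ meeting the ray $u_4$: each is injective for two of the three projections, so its vertices are counted twice. The decisive step is therefore to show these double contributions are compensated, and I expect the hypothesis of exactly $4d$ ends to enter here essentially: it forces each $p_k(C)$ to be a degree-$d$ plane curve with the full generic $3d$ ends, so Mikhalkin's bound is attained only when all interior lattice points are realised, and I would argue that a single face cannot be simultaneously extremal for two projections. Concretely I would run an inclusion–exclusion across the six faces of $L$ (equivalently, a floor decomposition of $C$ along one of the $u_i$), attempting to pair the surplus carried by each $\sigma_{i4}$ against a forced deficit in the faces $\sigma_{ij}$ with $i,j\le 3$; carrying this bookkeeping through to $V\le 2d^2-2d+2$ is the crux of the proof.
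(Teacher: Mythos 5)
Your reduction is fine as far as it goes: granting trivalence and connectedness, $g=V/2-2d+1$, so the theorem is indeed equivalent to the vertex bound $V\le 2d^2-2d+2$, which is exactly the paper's key inequality (\ref{equ:ineq val}) (stated there as $\sum_{v\in C^0}(\val_v(C)-2)\le 2d^2-2d+2$, valid without any trivalence assumption). But the proposal has a genuine gap at its core, which you acknowledge yourself: the step ``carrying this bookkeeping through to $V\le 2d^2-2d+2$ is the crux of the proof'' is a hope, not an argument. Naively summing your three projections gives $3d^2$ with the faces $\sigma_{i4}$ counted twice, and no mechanism is offered that forces the claimed ``deficit'' in the faces $\sigma_{ij}$, $i,j\le 3$; extremality of Mikhalkin's bound for the three shadows is neither established nor clearly usable. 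There is also a quantitative defect in the setup itself: you use only the three coordinate projections, whereas the paper's Lemma \ref{lem:vertex H} uses all four directions, including $(1,1,1)$. With four projections every open face of $L$ is seen injectively exactly twice, which is what produces the constant $(d-1)(d-2)$ in the (easy) case where the cycles of $C$ decompose over the faces of $L$; with your three projections the faces $\sigma_{ij}$, $i,j\le3$, are seen only once, so even in that easy case the count only yields $g\le\tfrac{3}{2}(d-1)(d-2)$.

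The second gap is the reduction ``WLOG $C$ is trivalent''. This requires a perturbation of $C$ \emph{inside} $L$ preserving the ends and not decreasing $b_1$. Such perturbations exist for vertices lying in the interiors of facets of $L$ (via local dual subdivisions) and, with care, near $Sk^1(L)\setminus Sk^0(L)$, but a vertex of $C$ at the cone point of $L$ cannot in general be split within $L$: this is precisely why the paper needs dedicated statements (Lemmas \ref{lem:vertex 1sk} and \ref{lem:vertex H2}) for vertices on $Sk^1(L)$ and at the origin. More generally, the paper's mechanism for extracting the sharp constant is entirely different from yours and entirely absent from your plan: it is tropical intersection theory inside $L$. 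One writes $d^2=\sum_v C^2_v$ for the self-intersection of $C$ in $L$, bounds $C^2_v\ge \val_v(C)-2$ at vertices interior to facets by Pick's formula, proves $C^2_v\ge \val_v(C)-3$ on $Sk^1(L)\setminus Sk^0(L)$ (Lemma \ref{lem:vertex 1sk}) and a lower bound for $C^2_0$ at the origin (Lemma \ref{lem:vertex H2}); summing these local estimates against $d^2$ is what yields (\ref{equ:ineq val}), and then the Euler characteristic computation you also use finishes the proof. Your projection idea corresponds to an ingredient (Lemma \ref{lem:vertex H}) that the paper can only apply under a strong hypothesis on how $C$ meets $Sk^1(L)$, never to the general curve.
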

Note that a tropical curve $C$ of degree $d$ in $\R^3$ has at most $4d$
unbounded edges, and that there is equality if and only if $C$ has
exactly $d$ unbounded edges of weight 1 in each of the outgoing
direction
\[
(-1,0,0), \ (0,-1,0),\ (0,0,-1),\ (1,1,1).
\]
We believe that Theorem \ref{thm:genus curve} still holds without the
assumption on unbounded edges of $C$, and that a (quite technical)
adjustment of our proof should work. It is nevertheless not so clear
to us how to generalise our proof in higher dimensions.

\subsection{Higher dimensions}
Tropical curves generalise to tropical varieties in $\R^n$ of any
dimension. These are finite polyhedral complexes in $\R^n$ such that
all faces have a direction defined over $\Z$, all facets (i.e. faces
of maximal dimension) are equipped with a positive integer weight,
and which satisfy a
balancing condition at each face of codimension 1. 
We refer to \cite[Section 5]{BIMS15} for a precise
definition of tropical subvarieties of $\R^n$. 
By convention, a tropical variety will always be of pure dimension: 
every face  is contained  in 
a facet.

There is also a notion of degree of a tropical variety $X$ in $\R^n$,
based on  stable
intersections defined in \cite{RGJS05,Mik06}.
Recall that  a standard fan tropical linear space of dimension $k$ in $\R^n$ is a polyhedral
fan with a vertex from which emanate $n+1$ rays in the directions
\[
(-1,0,\cdots, 0), (0,-1,0,\cdots,0), \cdots,
(0,\cdots, 0,-1), (1,1,\cdots ,1),
\]
and having
$\binom{n+1}{l}$ additional faces of dimension $l\in\{2,\cdots,l \}$
generated by
each subset of $l$ of the $n+1$ rays.
The degree of a tropical variety $X$ of codimension $k$ in $\R^n$ is
defined
as the stable intersection number of $X$ with a generic standard fan tropical linear space of dimension $k$.

\medskip
The aim of this paper is to  study the topology of tropical
varieties. To this purpose, it is more convenient to deal with compact
tropical varieties, and to consider \emph{projective} tropical
varieties, i.e. tropical subvarieties of the tropical projective space
$\TP^n$.
This latter is defined as the quotient of
$([-\infty;+\infty[)^{n+1}\setminus \{(-\infty,\cdots,-\infty)\}$
by the equivalence relation
\[
(x_0,\cdots,x_n)\sim
(x_0+\lambda,\cdots,x_n+\lambda) \qquad \lambda\in \R,
\]
see for
example \cite[Section 3.3]{MikRau19}.
The tropical projective space $\TP^n$ is the union of finitely many
copies of $\R^k$ with $k\in\{0,\cdots,n\}$ defined by
\[
\R_I=\{[x_0:\cdots:x_n] \ | \  x_i=-\infty \mbox{ if and only if
}i\in I\}
\]
where $I\subsetneq \{0,\cdots,n\}$. A tropical variety in $\TP^n$ is the union of the topological closure of finitely
many tropical varieties contained in some $\R_I$.
The notion of degree of a tropical variety extends to projective
tropical varieties, see \cite[Section 5.2]{MikRau19}.

\medskip
Now we are ready to state the main problem studied in this paper, as
well as our main results.
We define the numbers 
$$B_j(\xdim,\xcod,d)= \sup_X \{b_j(X) \}\in\N\cup \{+\infty\},$$
where $X$ ranges over all tropical subvarieties of
dimension $m$ and degree $d$ in $\TP^{m+k}$.

\begin{pb}
Estimate the numbers $B_j(\xdim,\xcod,d)$.
\end{pb}

Generalising what we saw in the case of curves,
the values of the numbers $B_j(\xdim,1,d)$ are well known by
\cite[Proposition 2.10]{Mik12}: for $m=0$, $B_0(0,1,d)=d$ and for $m\ge 1$ and $d\ge 1$,
$$B_0(\xdim,1,d)=1, \quad
B_1(\xdim,1,d)=\cdots=B_{m-1}(\xdim,1,d)=0,\quad \mbox{and}\quad
B_m(\xdim,1,d)=\left(\begin{array}{c}d-1\\ m+1\end{array} \right).$$
This  follows from the existence of the dual subdivision of a tropical
hypersurface. Determining the exact value of $B_j(\xdim,k,d)$ for
$k>1$ seems more difficult, and it is even not clear a priori that this
number is finite. 
Our main result is the following.
\begin{thm}\label{thm:intro}
  Let $d,m$ and, $k$ be three positive integers.
Then the number $B_j(\xdim,\xcod,d)$ is finite for any $j$, and one has 
$$B_m(\xdim,\xcod,d)\ge \xcod\cdot B_m(\xdim,1,d).$$
\end{thm}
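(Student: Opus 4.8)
The statement combines a finiteness assertion with a lower bound, and I would establish them by quite different means; the lower bound is the heart of the matter, so I describe it first.

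For the inequality $B_m(m,k,d)\ge k\cdot B_m(m,1,d)=k\binom{d-1}{m+1}$, the plan is to generalise the construction behind Theorem~\ref{thm:curve}, which is exactly the case $m=1$ (there $k=n-1$ and $\binom{d-1}{2}=B_1(1,1,d)$). I would realise an $m$-dimensional tropical variety $X$ of degree $d$ as a tropical hypersurface of $L$, i.e.\ a codimension-one tropical cycle contained in the standard fan tropical linear space $L$ of dimension $m+1$ in $\R^{m+k}$. The reason this can exceed the hypersurface record for the same degree is twofold. First, degree is multiplicative under stable intersection: if $X$ is a degree-$d$ divisor in the degree-$1$ linear space $L$, then intersecting $X$ with a generic complementary standard linear space $H$ reduces, via $L$, to intersecting a degree-$d$ divisor of $L$ with the tropical line $L\cap H$, giving $d$ points, so the degree of $X$ in $\R^{m+k}$ is $d\cdot 1=d$. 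Second, $L$ has $m+k+1$ rays instead of the $m+2$ rays spanning a single affine chart $\R^{m+1}$, so it offers extra room: after fixing a generic projection $\pi\colon\R^{m+k}\to\R^{m}$, whose fibres are $k$-dimensional, the fibre directions furnish $k$ independent ``elevator'' directions transverse to the base.

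I would then build $X$ by floor composition over $\pi$. As for the maximal non-singular hypersurface $Y\subset\R^{m+1}$, whose top Betti number $b_m(Y)=\binom{d-1}{m+1}$ equals the number of bounded regions of $\R^{m+1}\setminus Y$ (equivalently the number of interior lattice points of $d\Delta_{m+1}$), the $m$-cycles generating the top homology of $X$ arise as the boundaries of the bounded components of $L\setminus X$. The crux of the construction is to arrange, using the $k$ independent elevator directions in the fibres of $\pi$, that $L\setminus X$ has $k$ independent families of $\binom{d-1}{m+1}$ such bounded regions, whence $b_m(X)\ge k\binom{d-1}{m+1}$, which is all that is needed for the supremum. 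Concretely I would fix the fan structure of $L$, exhibit a non-singular regular subdivision whose dual tropical cycle is $X$, verify the balancing condition at each codimension-one face (routine once the subdivision is chosen), and compute $H_m(X;\mathbb K)$ from the cellular chain complex. The main obstacle is this final step: proving that the $k\binom{d-1}{m+1}$ bounding cycles are linearly independent in $H_m$ and that the gluing inside $L$ produces no further relations. I would attack it with a Mayer--Vietoris decomposition separating the $k$ elevator directions, the control of the connecting homomorphisms being where the real difficulty lies.

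For the finiteness of $B_j(m,k,d)$, the plan is to bound the number of cells of an arbitrary tropical variety $X\subset\TP^{m+k}$ of dimension $m$ and degree $d$ by a constant $C(m,k,d)$; since $b_j(X)$ never exceeds the number of $j$-cells of $X$, this forces $B_j(m,k,d)<\infty$ for all $j$. I would take a generic linear projection $p\colon\R^{m+k}\to\R^{m+1}$, under which $X$ pushes forward to a tropical hypersurface of degree $d$ in $\R^{m+1}$; being dual to a regular subdivision of $d\Delta_{m+1}$, this image has a number of cells bounded in terms of $m$ and $d$ alone. It then remains to bound the cells of $X$ lying over a given cell of $p(X)$, and for generic $p$ the map is finite on $X$ with fibre cardinality bounded polynomially in $d$ and $k$; this is the mechanism behind the bound $2d^2(n-1)^2$ of T.~Yu quoted above, which is the case $m=1$. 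Combining the two bounds controls the facets, and hence all cells, of $X$. The delicate point here is to establish a single bound on the fibre cardinality valid uniformly over all admissible $X$, that is, to bound how many cells of $X$ can project into one cell of the image hypersurface.
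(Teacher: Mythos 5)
Your overall strategy coincides with the paper's on both halves (projection onto a hypersurface for finiteness; floor-composed hypersurfaces of an $(m+1)$-dimensional tropical linear space $L\subset\TP^{m+k}$ for the lower bound), but in each half your argument stops exactly where the paper's key lemmas do the work, and the steps you defer as ``difficulties'' are the actual content of the proof.

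For the lower bound, you never construct $X$, and the two mechanisms you propose are not the right ones. First, ``a non-singular regular subdivision whose dual tropical cycle is $X$'' is not available: dual subdivisions exist for hypersurfaces of $\R^{m+1}$ (or of tropical toric varieties), not for divisors of a tropical linear space of positive codimension. For such divisors the correct substitute is a tropical rational function on $L$ (Proposition~\ref{Thm:function} and Lemma~\ref{lem:deg div pol}), which is what the paper's floor composition patterns are built from (Proposition~\ref{prop:floor deg d} then controls the degree). Second, the factor $k$ does not arise from ``$k$ elevator directions'' producing $k$ families of bounded regions within a single construction: in the paper it is injected entirely at the induction basis $m=1$, namely the planar curves of genus $k\cdot\frac{(d-1)(d-2)}{2}$ of Theorem~\ref{thm:main curve}, built by the \texttt{Gluing} routine that concatenates plane curves across a chain of tropical modifications. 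Higher dimensions are then reached recursively: the floors are copies of $L_{m,k}$ (contractible), the divisors are the previously constructed $X_{m,k,i}$, and the identity $b_{m+1}(X_K)=\sum_{i=1}^{d-1}b_m(X_{m,k,i})=k\binom{d-1}{m+2}$ is precisely the Mayer--Vietoris computation of Proposition~\ref{Thm:Betti} --- i.e.\ exactly the linear-independence statement you flag as your ``main obstacle''. Without the curve construction and without that proposition, the inequality is not established.

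For finiteness, the step you call ``the delicate point'' --- a bound on fibre cardinality and on the degree of the image valid uniformly over all admissible $X$ --- is the heart of the matter, and your proposal contains no mechanism for it: a projection ``generic for $X$'' yields constants depending on $X$, which is useless for bounding a supremum over all $X$. The paper's solution is Lemma~\ref{lem:finite dir}: via the Pl\"ucker embedding, tropical B\'ezout bounds every $m\times m$ minor of a facet direction by $d$, so the set of directions of facets of \emph{all} tropical varieties of dimension $m$ and degree $d$ in $\TP^{m+k}$ is finite; one may therefore fix a single projection transverse to every possible facet direction simultaneously, with image degree bounded by a constant $D(d,W)$ and fibre cardinality bounded by a constant $D'(d,W')$ (obtained from a further finite projection to $\TP^m$ and B\'ezout), both depending only on $(m,k,d)$. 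Two further inaccuracies: the image of $X$ generally has degree larger than $d$ (only bounded in terms of $d$ and the projection), and bounding the number of facets does not by itself bound the lower Betti numbers --- for $j<m$ the paper runs a separate induction on $m$, turning the $(m-1)$-skeleton into a tropical variety of bounded degree via stable self-intersection.
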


\begin{cor}\label{cor:unbounded}
  For any  integers $m\ge 1$ and $d\ge m+2$, we have
  $$\lim_{k\to+\infty} B_m(\xdim,\xcod,d)=+\infty.$$
\end{cor}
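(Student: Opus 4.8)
The plan is to obtain Corollary \ref{cor:unbounded} as an immediate consequence of Theorem \ref{thm:intro} combined with the explicit value of $B_m(\xdim,1,d)$ recalled just above its statement. First I would invoke the inequality from Theorem \ref{thm:intro},
$$
B_m(\xdim,\xcod,d)\ge \xcod\cdot B_m(\xdim,1,d),
$$
and substitute the known value $B_m(\xdim,1,d)=\binom{d-1}{m+1}$, which holds for $m\ge 1$ and $d\ge 1$. This yields the single estimate on which everything rests, namely
$$
B_m(\xdim,\xcod,d)\ge \xcod\cdot\binom{d-1}{m+1}.
$$

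The next step is to check that the hypothesis $d\ge m+2$ is exactly what makes the binomial factor nonzero. Indeed $\binom{d-1}{m+1}\ge 1$ precisely when $d-1\ge m+1$, that is, when $d\ge m+2$; under this assumption the right-hand side above is bounded below by $\xcod$. Letting $\xcod$ tend to $+\infty$ then forces $B_m(\xdim,\xcod,d)\to +\infty$, which is the claim.

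I do not expect any genuine obstacle here, since all the substantive content has already been established in Theorem \ref{thm:intro} (both the finiteness of each $B_j$ and the crucial lower bound). The only point requiring a moment of care is to notice that $d\ge m+2$ is the sharp threshold guaranteeing $B_m(\xdim,1,d)\neq 0$: for $d\le m+1$ one would have $\binom{d-1}{m+1}=0$ and the lower bound would degenerate, so the restriction on $d$ in the statement is precisely the one needed to turn the linear growth in $\xcod$ into divergence.
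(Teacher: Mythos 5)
Your proof is correct and is exactly the paper's (implicit) argument: the corollary is stated as an immediate consequence of Theorem \ref{thm:intro} combined with the value $B_m(\xdim,1,d)=\binom{d-1}{m+1}$ recalled from \cite[Proposition 2.10]{Mik12}, and your observation that $d\ge m+2$ is precisely the condition making this binomial coefficient positive is the only point that needs checking.
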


From our proof
that  $B_j(\xdim,\xcod,d)$ is finite, it is
possible to extract explicit upper bounds. Nevertheless these bounds seem
far from being sharp (for example we did not succeed 
 to obtain a
better upper bound than T.~Yu in the case of curves). The lower bound in Theorem \ref{thm:intro}
is obtained by constructing
explicit examples. To do so, we use a method of construction of
tropical varieties that we call floor composition (see
Section~\ref{Sec:composition}), and  which originates in the floor
decomposition technique introduced by Brugall\'e and Mikhalkin
(\cite{BruMikh07, BruMikh08,Br6}), and in the tropical modifications
introduced by Mikhalkin in \cite{Mik06}. It is worth noting that the floor
composed varieties we construct are
actually projective hypersurfaces, thus generalising 
 Theorem \ref{thm:curve}.
\begin{thm}\label{thm:intro 2}
  Let $d,m$ and $k$ be three positive integers.
  Then there exist a tropical
linear space $L$ of dimension $m+1$ in $\TP^{m+k}$, and 
a tropical hypersurface $X$ of degree $d$
in $L$ such that
$$b_m(X)\ge \xcod\cdot B_m(\xdim,1,d).$$ 
\end{thm}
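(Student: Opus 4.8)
The plan is to produce $L$ and $X$ by an explicit floor composition and then to extract the bound on $b_m(X)$ from the way the floors are stacked. The basic building block is the extremal case of codimension one: by the formula $B_m(m,1,d)=\binom{d-1}{m+1}$ recalled above, a non-singular tropical hypersurface $X_0$ of dimension $m$ and degree $d$ in $\TP^{m+1}$ realises $b_m(X_0)=\binom{d-1}{m+1}$. This is the floor whose top homology I want to reproduce $k$ times inside a single hypersurface. Note that the curve case $m=1$ of Theorem~\ref{thm:curve}, where the genus $(n-1)\binom{d-1}{2}$ is obtained with $k=n-1$ floors each of genus $\binom{d-1}{2}$, is exactly the shadow of this construction, so the task is to upgrade that picture to arbitrary dimension and to hypersurfaces rather than curves.

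Next I would build $L$ and $X$ simultaneously through floor composition (Section~\ref{Sec:composition}), using the tropical modifications of \cite{Mik06}. Starting from $\TP^{m+1}$, I perform $k-1$ successive tropical modifications along tropical hyperplanes; each modification raises the ambient dimension by one while keeping the dimension of the resulting linear space equal to $m+1$, so after $k-1$ steps one reaches a tropical linear space $L$ of dimension $m+1$ in $\TP^{m+k}$. Simultaneously I place a copy of (a suitable degeneration of) $X_0$ at each of the $k$ successive levels created by these modifications, and connect consecutive copies by the vertical ``elevator'' walls produced by the modifications; the union $X$ is arranged to be a single tropical hypersurface of degree $d$ in $L$. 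The genuinely delicate part here is the combinatorial bookkeeping: one must check that $X$ is balanced at the new codimension-one faces introduced along the elevators, that its degree is exactly $d$, and that each floor indeed carries the full top homology $H_m(X_0)$ while the loci where a floor meets an elevator have dimension at most $m-1$.

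Finally I would compute $b_m(X)$ by a Mayer--Vietoris argument adapted to this stratification. Cover $X$ by open neighborhoods $U_1,\dots,U_k$ of its $k$ floors, chosen so that $U_i\cap U_{i+1}$ deformation retracts onto the elevator separating floors $i$ and $i+1$, and $U_i\cap U_j=\varnothing$ for $|i-j|\ge 2$. Since an elevator is homotopy equivalent to its $(m-1)$-dimensional base, one has $H_m(U_i\cap U_{i+1};\mathbb K)=0$; setting $X_{\le i}=U_1\cup\dots\cup U_i$ one checks $X_{\le i}\cap U_{i+1}=U_i\cap U_{i+1}$, so the Mayer--Vietoris sequence gives, by induction on $k$, an injection $\bigoplus_{i=1}^k H_m(U_i;\mathbb K)\hookrightarrow H_m(X;\mathbb K)$. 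As each $H_m(U_i;\mathbb K)\cong H_m(X_0;\mathbb K)$ has dimension $\binom{d-1}{m+1}$, this yields $b_m(X)\ge k\cdot\binom{d-1}{m+1}=k\cdot B_m(m,1,d)$, the inequality rather than equality reflecting possible extra $m$-cycles created globally by the chain of elevators. The main obstacle is thus not the homological bookkeeping, which is clean once the stratification is in place, but the construction itself: producing a balanced degree-$d$ hypersurface inside $L$ whose floors are independent copies of the extremal building block, with gluing loci of dimension $\le m-1$ so that no floor cycle becomes a boundary after assembly.
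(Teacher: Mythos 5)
Your homological endgame is sound: \emph{if} one had $k$ floors, each a copy of an extremal degree-$d$ hypersurface $X_0\subset\TP^{m+1}$ with $b_m(X_0)=\binom{d-1}{m+1}$, joined by elevators that deformation retract onto $(m-1)$-dimensional bases, then the Mayer--Vietoris induction you describe does give the injection $\bigoplus_{i=1}^k H_m(U_i;\mathbb K)\hookrightarrow H_m(X;\mathbb K)$ and hence the bound. But the actual content of the theorem is the \emph{existence} of such an $X$, and this is exactly the step you defer (``a suitable degeneration of $X_0$'', ``the union $X$ is arranged to be a single tropical hypersurface'', and your own closing admission that the main obstacle is ``the construction itself''). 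For $m\ge 2$ this is a genuine gap, not bookkeeping: to glue two hypersurface pieces along an elevator so that the result is balanced, both pieces must be, near the cutting locus, cylinders of matching weights over one and the same $(m-1)$-dimensional tropical variety of degree $d$, and these degenerate models must simultaneously retain the full top Betti number $\binom{d-1}{m+1}$. Even in the curve case this required real work in the paper: Lemmas \ref{lem:constr curve 1} and \ref{lem:constr curve 2} construct families of planar curves of full genus possessing a weight-$d$ degenerate edge, and the Gluing routine in the proof of Theorem \ref{thm:main curve} crucially exploits the fact that the cut locus is a single point on that edge. Your proposal supplies no higher-dimensional analogue of these lemmas, and producing one is not routine.

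The paper sidesteps precisely this difficulty, and its architecture is in a sense transposed to yours. It performs the ``glue $k$ extremal copies through modifications'' construction only for $m=1$ (Theorem \ref{thm:main curve}); then, in Theorem \ref{Thm:HigherConst}, it raises the dimension by induction on $m$ via floor composition: the floors are copies of the \emph{contractible} linear space $L_{m,k}$, carrying rational functions $h_i$ of degree $i$ provided by Lemma \ref{lem:deg div pol}, while the walls are the previously constructed varieties $X_{m,k,i}$ of degrees $i=1,\ldots,d-1$. There the top homology comes from the walls rather than the floors: by Proposition \ref{Thm:Betti},
$$b_{m+1}(X_{m+1,k,d})=\sum_{i=1}^{d-1}b_m(X_{m,k,i})=k\sum_{i=1}^{d-1}\binom{i-1}{m+1}=k\binom{d-1}{m+2},$$
each $m$-cycle of a wall being suspended by the contractible floors above and below it, while degree $d$ and containment in $L_{m+1,k}=L_{m,k}\times\R$ follow from Proposition \ref{prop:floor deg d}. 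If you want to salvage your route, the ingredient you must supply is the higher-dimensional analogue of Lemmas \ref{lem:constr curve 1}--\ref{lem:constr curve 2}: degree-$d$ hypersurfaces of full top Betti number with prescribed matching cylindrical ends. That existence statement is substantially harder than the Mayer--Vietoris part, and it is precisely what the paper's inductive detour through contractible floors is designed to avoid.
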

In connection to algebraic geometry, it seems also  interesting to
determine the maximal value of Betti numbers of tropical hypersurfaces
of degree $d$ of a given tropical linear space.
At this time, we are not aware any generalisation 
 of
Theorem \ref{thm:genus curve} to tropical varieties of higher dimension.
\begin{rem}
All tropical varieties we construct in our proof of
Theorem \ref{thm:intro 2} are singular as soon as $k\ge 2$.
It may be interesting to
study bounds on Betti numbers, and more generally on tropical Hodge
numbers, of non-singular tropical projective varieties of a given
dimension, codimension, and degree. In particular, we do not know if
there exist  universal finite upper bounds which do not depend on
the codimension.
For example, it follows from the tropical adjunction
formula~\cite[Theorem 6]{Sha15} that the upper bound given by
Theorem~\ref{thm:genus curve} can be refined to the classical bound
$\frac{1}{2}(d-1)\cdot(d-2)$ under the additional assumption that $C$
is locally of degree 1 in $L$ (i.e. $C$ is a non-singular tropical
subvariety of $L$).
\end{rem}

\bigskip
Homology groups of a tropical variety $X$ are special instances of its tropical
homology groups (we refer to
\cite{MikZha14,BIMS15,KSW16} for the definition of  tropical
homology for locally finite polyhedral complexes in  $\TP^n$). More precisely, 
the group $H_j(X ;\R)$ is canonically isomorphic to  the tropical homology group $H_{0,j}(X
;\R)$.
Our proof of finiteness of the numbers $B_j(\xdim,k,d)$  in Theorem~\ref{thm:intro} also implies finiteness of the numbers
$$\sup_X
\{\dim H_{p,q}(X,\R) \}\in\N\cup \{+\infty\},$$
where $X$ ranges over all tropical subvarieties of
dimension $m$ and degree $d$ in $\TP^{m+k}$.
In the case of surfaces, we  compute  all
tropical homology groups of the tropical surfaces  constructed in the
proof of Theorem \ref{thm:intro 2}. Let us denote by $h_{p,q}^\C(d,m)$
the dimension of the $(p,q)$-tropical homology group of a non-singular
tropical hypersurface of degree $d$ in $\TP^{m+1}$.
By \cite[Corollary 2]{IKMZ},  this
number does not depend on a particular choice of a tropical
hypersurface, and  is equal to the $(p,q)$-Hodge
number of a non-singular complex algebraic hypersurface of degree $d$
in $\CP^{m+1}$. In particular we have
$$h^{\C}_{2,0}(d,2)=\frac{(d-1)\cdot(d-2)\cdot(d-3)}{6}
  \qquad\mbox{and}\qquad h^{\C}_{1,1}(d,2)=\frac{4d^3-12d^2+14d}{6}.$$
A tropical surface in $\TP^n$ is called \emph{spatial} if it is
contained in a tropical linear space $L$ of dimension $3$.
  
\begin{thm}\label{thm:main surface}
  Let $k$ and $d$ be two positive integers.
  Then there exist a 
spatial tropical surface $X$ of degree $d$
in $\TP^{2+k}$ with the following
tropical Hodge diamond
$$\begin{array}{ccccc}
  & & 1 & &
\\  & 0 & &\qquad 0  \qquad&
\\ \xcod\cdot h^{\C}_{2,0}(d,2) &
&
h^{\C}_{1,1}(d,2) +\frac{(k-1)\cdot(d-1)\cdot(2d^2-7d+9)}{3}
& & \xcod\cdot h^{\C}_{2,0}(d,2)
\\  & (k-1)\cdot(d-1) & & 0 &
\\  & &1 & &
\end{array} $$ 
 where we use the  convention  that $h_{0,0}$ is the topmost number and $h_{2,0}$ the leftmost one.
\end{thm}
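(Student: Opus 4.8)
The plan is to compute all the groups $H_{p,q}(X;\R)$ from the cellular chain complex of tropical homology attached to the explicit surface $X$ produced in the proof of Theorem~\ref{thm:intro 2}. Recall that this $X$ is a floor-composed tropical hypersurface of degree $d$ inside the $3$-dimensional tropical linear space $L\subset\TP^{2+k}$; as such it comes equipped with a decomposition into $d$ floors glued along $d-1$ interior elevator walls. The first step is to turn this combinatorial model into linear algebra: for every face $\sigma$ of $X$ I would compute the multi-tangent space $\mathcal{F}_p(\sigma)$, the space spanned by the $p$-fold wedge products of the directions of the facets of $X$ containing $\sigma$, separating the faces contained in the non-singular locus of $X$ from those meeting the singular locus, which is nonempty precisely when $k\ge 2$. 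This yields, for each $p\in\{0,1,2\}$, a finite chain complex $(C_{p,\bullet},\partial)$ with $C_{p,q}=\bigoplus_{\dim\sigma=q}\mathcal{F}_p(\sigma)$ whose homology computes $H_{p,q}(X;\R)$.

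The second step is to organise the computation along the floor structure, treating the non-singular case $k=1$ as a base line. When $k=1$ we have $L=\TP^3$ and $X$ is a non-singular surface, so by \cite{IKMZ} its Hodge numbers are exactly $h^{\C}_{p,q}(d,2)$; for general $k$ I would phrase every entry as this non-singular value plus a defect supported on the singular walls. A Mayer--Vietoris argument for the covering of $X$ by neighbourhoods of its $d$ floors, or equivalently a filtration by floors, reduces each $H_{p,q}(X;\R)$ to the homology of a single floor together with gluing contributions along the $d-1$ walls. The cheap entries drop out immediately: $X$ is connected and carries a fundamental class, giving $h_{0,0}=h_{2,2}=1$; a direct inspection of $\mathcal{F}_p$ on the floors gives $h_{1,0}=h_{0,1}=h_{1,2}=0$; and the relevant rank computations in the $p=0$ and $p=2$ systems give $h_{0,2}=h_{2,0}=k\cdot h^{\C}_{2,0}(d,2)$, the value $h_{0,2}=b_2(X)$ realising with equality the lower bound of Theorem~\ref{thm:intro}.

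The two remaining entries carry all the content. For $h_{2,1}$ I would exhibit, for each of the $k-1$ ambient directions not present in the non-singular model $\TP^3$ and each of the $d-1$ interior walls, one $1$-chain in $C_{2,1}$ with $\mathcal{F}_2$-coefficient that is closed but not a boundary; a rank count of $\partial\colon C_{2,2}\to C_{2,1}$ and of $\partial\colon C_{2,1}\to C_{2,0}$ then shows these $(k-1)(d-1)$ classes form a basis of $H_{2,1}(X;\R)$. The group $H_{1,1}$ is the genuinely hard one: here $\mathcal{F}_1$ is most sensitive to the folding of $L$, and there is no symmetry to exploit, since $h_{2,1}\ne h_{1,2}$ already shows the diamond is invariant neither under $(p,q)\mapsto(q,p)$ nor under $(p,q)\mapsto(2-p,2-q)$. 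I would compute $\dim H_{1,1}$ as the non-singular contribution $h^{\C}_{1,1}(d,2)$ plus a correction that is a sum over the $d-1$ walls and the $k-1$ extra directions of a term quadratic in $d$ coming from the cells of each floor, the total being $\frac{(k-1)(d-1)(2d^2-7d+9)}{3}$.

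The main obstacle is precisely this $(1,1)$ computation, and within it the honest bookkeeping of the boundary-map ranks $\partial\colon C_{1,2}\to C_{1,1}$ and $\partial\colon C_{1,1}\to C_{1,0}$ in the presence of singular faces. To keep this under control I would lean on three independent consistency constraints that together determine the last unknown entry once the others are in hand: the ordinary Betti numbers $h_{0,q}=b_q(X)$ already fixed above, the equality $h_{2,0}=h_{0,2}$ coming from the top-degree coefficient sheaf $\mathcal{F}_2$, and the tropical Euler characteristic identity $\sum_{p,q}(-1)^{p+q}h_{p,q}=\sum_{\sigma}\sum_{p}(-1)^{p+\dim\sigma}\dim\mathcal{F}_p(\sigma)$, whose right-hand side is a purely combinatorial sum over the cells of $X$ and can be evaluated floor by floor. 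Matching the resulting cubic polynomial in $d$ against the stated formula for $h_{1,1}$ completes the proof.
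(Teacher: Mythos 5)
Your plan shares the paper's skeleton --- a floor-by-floor Mayer--Vietoris computation in which $h_{2,1}$ and $h_{1,1}$ carry the content --- but two of its load-bearing steps fail as stated. The first is the baseline: you claim that for $k=1$ the surface is non-singular, so that \cite[Corollary 2]{IKMZ} gives its Hodge numbers. This is false. The floor-composed surfaces of Theorem~\ref{thm:intro 2} are singular already for $k=1$ and every $d\ge 2$: the curves used as divisors (the $\widetilde C_i$ of Lemma~\ref{lem:constr curve 1}, glued in Theorem~\ref{thm:main curve}) have an edge of weight $i$, and the vertical walls $\mathcal W_i$ of the floor composition inherit these weights, so the surface has facets of weight $\ge 2$ and is not a non-singular tropical hypersurface; \cite{IKMZ} does not apply to it. This is precisely why the paper invokes Shaw's computation \cite{Sha13} of the tropical homology of (singular) floor-composed surfaces in $\TP^3$ as the $k=1$ case. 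For the bare statement of the theorem at $k=1$ you could take a different, non-singular $X$, but then it cannot serve as the baseline of your ``non-singular value plus defect'' bookkeeping at $k\ge 2$, which concerns the actual floor-composed surfaces.

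The second gap is that the entries you declare cheap are where all the work is. The vanishings $h_{1,0}=h_{1,2}=0$ and the values $h_{2,0}=h_{0,2}=k\cdot h^{\C}_{2,0}(d,2)$ do not follow from ``direct inspection of $\mathcal F_p$ on the floors'': in the paper they come out of Mayer--Vietoris sequences (Lemma~\ref{lem:remove boundary modif}, Proposition~\ref{prop:rec hom fd}) which split only because certain connecting maps are zero or injective, and those facts rest on non-combinatorial inputs --- the homology of $\TP^1$-, line- and $\mathbb T^{\times}$-bundles over the curves via a tropical Gysin map (Lemma~\ref{lem:proj bd}, Corollary~\ref{cor:torus bd}), the non-vanishing of the first Chern class of the bundle over $C_{k,d}$ (Example~\ref{ex:chern surface}), which shifts $h_{1,1}$ of the $\mathbb T^{\times}$-bundle by $1$ (Example~\ref{ex:torus bd curve}), the behaviour of tropical homology under birational modifications (Lemma~\ref{lem:hom bir modif}), and the precise intersection pattern of consecutive curves, namely $k-1$ segments plus transverse points (Theorem~\ref{thm:main curve}). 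Relatedly, your ``constraint'' $h_{2,0}=h_{0,2}$ has no a priori justification for singular tropical surfaces: as you yourself observe, the diamond has no $(p,q)\leftrightarrow(q,p)$ symmetry (here $h_{2,1}\ne h_{1,2}$), so the equality of the corner entries is an output of the computation, not a usable input. Your per-$p$ Euler-characteristic identity is correct, and for $p=1$ it would indeed pin down $h_{1,1}$ once $h_{1,0}=h_{1,2}=0$ were established and the cell count (including the boundary cells in $\TP^{2+k}$, where the coefficients $\mathcal F_p$ involve quotients) were carried out; but as the plan stands, the inputs it needs are exactly the unproven steps, so the argument does not close.
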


Hence as soon as $d\ge 2$, the quantities $h_{1,1}(X)$ and
$h_{2,1}(X)$ are not bounded
from above among spatial tropical surfaces of degree $d$.
Our proof of Theorem
\ref{thm:main surface}
generalises the computation by K.~Shaw of
tropical homology groups of floor
composed surfaces in $\TP^3$ \cite{Sha13}. 
We point out that the technique developed to prove Theorem
\ref{thm:main surface} also applies to study tropical Hodge numbers of  floor composed tropical varieties of any dimension. Nevertheless computations become a bit tedious starting from dimension 3, so we restricted ourselves to the case of surfaces.

\subsection{Comparison with algebraic geometry}
To a great extent, the tremendous development of tropical geometry the
last fifteen years has been motivated by its deep relations to
algebraic geometry. There exists several procedures
that associate a tropical variety $X$ to a family of projective
complex algebraic varieties $(\mathcal X_t)$. For such a
\emph{realisable} tropical variety, the tropical Hodge numbers may be
bounded from above in terms of
the Hodge numbers of a general member of the family  $(\mathcal X_t)$,
see for
example \cite[Corollary 5.8]{HelKat12}, \cite[Corollary 5.3]{Katz2}, and
\cite[Corollary 2]{IKMZ}.

Hence it is reasonable to compare our
main results stated above to what is known about Hodge numbers of projective
complex algebraic varieties.
As usual, in the case of hypersurfaces (and more generally  of
complete intersections) in $\TP^n$, both series of geometric invariants coincide:
it follows from \cite[Corollary 2]{IKMZ} that the tropical Hodge numbers of a
non-singular tropical hypersurface equal the Hodge numbers of
a non-singular complex algebraic hypersurface of the same
dimension and degree\footnote{Note however that this correspondence only concerns
  dimension of the corresponding vector spaces.
  There is no canonical isomorphism
  between tropical homology groups and Hodge groups in general.}.

Given two positive integers $m$ and $d$, the Hodge number $h^{p,q}(\mathcal
X)$ of a projective complex algebraic variety $\mathcal X$ of degree
$d$ and dimension $m$ is bounded
from above by some constant that only depends on $m$ and $d$, see
\cite{Mil64,Har81}. For example, it is well known that a
cubic curve in $\CP^n$ has genus at most 1, whatever the value of $n$
is. Corollary \ref{cor:unbounded} and Figure \ref{fig:cubicgenus2}
show that the situation is
drastically different in tropical geometry, where such an upper
bound independent on the codimension does not exist. In particular,
for 
$k$ large enough with respect to some fixed $m$ and $d$, the tropical
varieties whose existence is 
attested by Theorem \ref{thm:intro 2} are not the tropicalisation of any family of projective varieties of the same dimension and degree.

\medskip
In a somewhat similar direction, 
Davidow  and Grigoriev studied in  \cite{DavGri17} the possible numbers of
connected components of intersections of tropical varieties.
They proved in particular that this number can also be much larger than the bound in algebraic geometry given by Bézout Theorem.

\bigskip
\noindent {\bf Organisation of the paper.}
Section~\ref{Sec:upper} is devoted to showing the finiteness of $B_j(\xdim,\xcod,d)$ 
  and proving Theorem \ref{thm:genus curve}. 
  In Section~\ref{Sec:composition} the floor composition method is
  introduced and we explain how to compute Betti numbers of the
  obtained varieties.
  In Section~\ref{Sec:lower}, we first prove Theorem~\ref{thm:main curve} which contains Theorem~\ref{thm:curve}. 
 We then give lower estimates of $B_j(\xdim,\xcod,d)$ in general
 using Theorem~\ref{thm:main curve} as induction basis,
 and floor composition to recursively construct the varieties of Theorem~\ref{thm:intro 2}.
 Section~\ref{sec:trop hom} is dedicated to the computation of tropical
 Hodge numbers of floor composed tropical surfaces and to the proof of Theorem ~\ref{thm:main surface}.

\medskip
\noindent {\bf Acknowledgement.} 
We are grateful to Kristin Shaw for her disponibility to explain many
aspects of her previous works, and more generally for many enlightening
discussions about tropical homology. We also thank an anonymous
referee for many useful remarks about a first version of this paper.

This research has been supported by ECOS NORD M14M03, UMI 2001, 
Laboratorio Solomon Lefschetz CNRS-CONACYT-UNAM, Mexico. L. L.d.M. 
was also supported for this research by PAPIIT-IN114117 and PAPIIT- IN108216. 
 Part of this work has been achieved during visits of B.B. and E.B. at
Universidad Nacional Autónoma de México (Instituo de Matemáticas, Unidad Cuernavaca), and of B.B. and L. L.d.M. at Centre Mathématiques Laurent Schwartz. We
thank these institutions for their support and the excellent working
conditions they offered us.

 \section{Upper estimates}\label{Sec:upper}

In this section, we prove the finiteness of the numbers
$B_j(\xdim,\xcod,d)$, and 
Theorem \ref{thm:genus curve}.
The main ingredient is  tropical intersection theory, for which we refer to \cite{AlRa1,Sha13-2,Br17} for more details.
  
\subsection{Finiteness of $B_j(\xdim,\xcod,d)$} 
 Our strategy to prove the finiteness of $B_j(m,k,d)$
is to reduce to the case of hypersurfaces by a
suitable projection  universal for all tropical subvarieties  of
dimension $m$ and degree $d$ in $\TP^{m+k}$. 
We denote by $Gr(m,\Z^{m+k})\subset Gr(m,\R^{m+k})$ the space of
subvector spaces of dimension $m$ of $\R^{m+k}$ that are defined over $\Z$.
\begin{lemma}\label{lem:finite dir}
  Let  $\mathcal V(d,m,k)$ be the set of elements of $Gr(m,\Z^{m+k})$
  that are the direction of a facet of a tropical variety
  of dimension $m$ and degree $d$ in $\TP^{m+k}$. Then
  $\mathcal  V(d,m,k)$ is a finite set.
\end{lemma}
\begin{proof}
  The usual Plücker embedding of $Gr(m,\R^{m+k})$ lifts to an injection
  $$\begin{array}{cccc}
   \phi:& Gr(m,\Z^{m+k})&\longrightarrow&
   \Lambda^m\left(\Z^{m+k}\right)/\{\pm 1\}
   \\ & \mbox{Span}(v_1,\cdots, v_m)&\longmapsto& v_1\wedge \cdots\wedge v_m
 \end{array},$$ 
 where $(v_1,\cdots, v_m)\in (\Z^{m+k})^m$ is a basis of the lattice
 $\mbox{Span}(v_1,\cdots, v_m)\cap \Z^{m+k}$.  In the standard
 coordinates of $\Lambda^m\left(\Z^{m+k}\right)$, the coordinates of
 $\phi\left(\mbox{Span}(v_1,\cdots, v_m)\right)$ are given by all $m\times m$
 minors of the matrix $(v_1,\cdots, v_m)$.

 Suppose now that $V\in \mathcal  V(d,m,k)$, and choose a basis 
 $(v_1,\cdots, v_m) $ of $V\cap \Z^{m+k}$.
 Let $(u_1,\cdots,u_{m+k})$ denote the canonical basis of $\R^{m+k}$. 
 Then by the tropical
  Bézout Theorem, one has
  $$\left|\det(u_{i_1},\cdots ,u_{i_k},v_1,\cdots,v_m) \right|\le d $$
  for any subset $\{i_1,\cdots,i_k\}\subset\{1,\cdots,m+k\}$. 
 All these determinants are precisely the $m\times m$  minors of the matrix $(v_1,\cdots, v_m)$. Hence we deduce that $\phi\left( \mathcal  V(d,m,k)\right)$ is a finite set, and so is $\mathcal  V(d,m,k)$.
\end{proof}

\begin{prop}\label{prop:finite cells}
 Given any  integers $m,k,d>0$ and $j\ge 0$, the number  $B_j(\xdim,\xcod,d)$ is finite.
  \end{prop}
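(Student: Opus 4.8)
The plan is to reduce the general case to that of tropical hypersurfaces by means of a single linear projection that works simultaneously for all $X$, exactly as announced before Lemma~\ref{lem:finite dir}. By that lemma the set $\mathcal V(d,m,k)$ of facet directions occurring among all tropical varieties of dimension $m$ and degree $d$ in $\TP^{m+k}$ is finite. Each $V\in\mathcal V(d,m,k)$ is an $m$-dimensional subspace of $\R^{m+k}$, so a generic linear projection $\pi\colon\R^{m+k}\to\R^{m+1}$ restricts to an injection on every such $V$: for a fixed $V$ the projections failing to be injective on $V$ form a proper closed subset of the space of projections, and one only has to avoid the finite union of these subsets over $V\in\mathcal V(d,m,k)$. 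The resulting $\pi$ is universal, i.e.\ independent of $X$, and extends to a map $\TP^{m+k}\dashrightarrow\TP^{m+1}$.

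First I would record the consequences of this injectivity. Since every facet of $X$ has its direction in $\mathcal V(d,m,k)$, the map $\pi$ is affine and injective in restriction to each facet, hence to each face of $X$; in particular $\pi|_X$ is finite (proper, with $0$-dimensional fibres) onto its image $Y=\pi(X)$, which is a genuinely $m$-dimensional tropical hypersurface of $\R^{m+1}$, respectively of $\TP^{m+1}$ after compactification. The pushforward $\pi_*[X]$ in tropical intersection theory (\cite{AlRa1,Sha13-2,Br17}) is an effective balanced weighted cycle supported on $Y$, and projecting a cycle does not increase its degree, so $\deg Y\le d$. Consequently the Newton polytope of $Y$ is contained in a dilate of the standard simplex of size at most $d$, and therefore $Y$ admits a polyhedral structure with a number of cells bounded solely in terms of $m$ and $d$.

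Next I would bound the number of sheets of the covering $\pi|_X\colon X\to Y$. For a generic point $y$ in the relative interior of a facet of $Y$, the fibre $\pi^{-1}(y)\cap X$ is finite, and its cardinality is the number of sheets, which by the projection formula for degrees satisfies (number of sheets)$\,\cdot\,\deg Y=d$ and is thus at most $d$. Since $\pi$ is injective on each face of $X$, every face of $X$ maps homeomorphically onto a polyhedron of $\R^{m+1}$, and the preimage under $\pi|_X$ of the relative interior of any cell of $Y$ meets at most $d$ faces of $X$. Taking the common refinement of the structure of $Y$ with the boundedly many projected faces of $X$ therefore produces a polyhedral structure on $X$ whose total number of cells is bounded by a function of $m$, $k$ and $d$ alone.

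Finally, once the number of cells of a polyhedral structure on $X$ is bounded independently of $X$, each Betti number $b_j(X)$ is bounded by the number of $j$-cells, since $H_j(X;\mathbb K)$ is a subquotient of the cellular chain group; hence $B_j(\xdim,\xcod,d)<+\infty$ for every $j$. The main obstacle is the middle step: ensuring that the image $Y$ has degree bounded by $d$ and that the covering has boundedly many sheets. Both rest on the interplay between the genericity of $\pi$ relative to the finite set $\mathcal V(d,m,k)$ and the behaviour of degree under pushforward, and care is needed to treat the compactification to $\TP^{m+1}$ and to verify that $Y$ is genuinely a balanced tropical hypersurface rather than a lower-dimensional or unbalanced object.
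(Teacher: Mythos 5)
Your overall strategy --- reduce to hypersurfaces via one projection that is injective on the finitely many directions of Lemma~\ref{lem:finite dir} --- is the same as the paper's, but two of your intermediate claims are false, and one of them conceals the real difficulty. First, the assertion that ``projecting a cycle does not increase its degree, so $\deg Y\le d$'' fails for the projections you are forced to use: genericity with respect to $\mathcal V(d,m,k)$ rules out coordinate projections (every coordinate projection contracts some coordinate direction, and all coordinate directions occur in $\mathcal V(d,m,k)$), and a general integer linear projection can increase tropical degree. For instance, pushing the standard tropical line in $\R^2$ forward along $(x,y)\mapsto x+y$ gives the cycle $\R$ with weight $2$, which has degree $2$ in $\TP^1$. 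For the same reason your relation (number of sheets)$\,\cdot\,\deg Y=d$ fails. This part is reparable: the paper only claims $\deg \pi(X)\le D(d,W)$ for a constant depending on the universal projection, and bounds fibre cardinality by a constant $D'(d,W')$ obtained from a second, finite projection $\pi'\colon X\to\TP^{m}$ together with the tropical B\'ezout theorem; constants depending on $(d,m,k)$ through the fixed projection are all one needs for finiteness.

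The genuine gap is in your last step. You take ``the common refinement of the structure of $Y$ with the boundedly many projected faces of $X$'', but $X$ carries no canonical polyhedral structure once $k\ge 2$ (the paper's example: the union of the $2$-planes $x_1=x_2=0$ and $x_3=x_4=0$ in $\R^4$), and in an arbitrary structure the number of faces is not bounded in terms of $(m,k,d)$ --- bounding it is essentially what is to be proved, so the argument is circular. Lifting the canonical decomposition of $Y=\pi(X)$ works only in top dimension: preimages of open facets of $Y$ are disjoint unions of open facets of $X$, which combined with the fibre bound gives $B_m(m,k,d)<+\infty$. Over lower-dimensional cells of $Y$ the preimage need not be a disjoint union of cells mapping homeomorphically onto their image: in the two-planes example, the preimage of the open $1$-cell of $\pi(X)$ is a connected ``cross'', not a polyhedron, and the subdivision needed to repair this depends on $X$ itself. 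The paper handles $j<m$ by an idea absent from your proposal: using stable intersection it endows the closure $Sk^{m-1}(X)$ of the $(m-1)$-skeleton with weights making it a tropical variety of dimension $m-1$, codimension $k+1$ and degree at most $\widetilde D(d,W)$; since $X$ is obtained from $Sk^{m-1}(X)$ by attaching $m$-cells, one gets $b_j(X)\le b_j\bigl(Sk^{m-1}(X)\bigr)$ for $j<m$, and concludes by induction on $m$. Without this recursion, or some substitute for it, your bound on $b_j(X)$ for $j<m$ does not follow.
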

\begin{proof}
  Without loss of generality, we only consider tropical
  subvarieties of $\TP^n$ with no irreducible component contained in
  $\TP^n\setminus\R^n$.
  Then  the number of faces of dimension $j$ of a tropical
  hypersurface
  in $\R^{n}$ is equal to the
  number of faces of dimension $n-j$ in its dual subdivision.
  Any tropical hypersurface of degree $d$ in $\TP^n$ has a Newton
  polytope included in
the simplex with vertices 
$$(0,\cdots,0), \ (d,0,0,\cdots,0), \ (0,d,0,\cdots,0),\ \cdots,
\ (0,\cdots,0,d,0), \ (0,\cdots,0, 0,d).$$ 
Hence the proposition holds true in the case of hypersurfaces,
i.e. when $k=1$.

\medskip
In the case when $k\ge 2$, we prove the proposition by induction on
$m$, the case $m=0$ holding trivially.
Note that a tropical subvariety of $\TP^n$
carries a canonical polyhedral
decomposition when it is either a curve or a hypersurface, however
this is
 no longer 
the case in higher dimensions and codimensions
(think for example of the union of the two $2$-planes with equations $x_1=x_2=0$
and $x_3=x_4=0$ in $\R^4$).
Given any couple $(V,V')$ of distinct
elements of  $\mathcal  V(d,m,k)$, we fix a vector $u_{V,V'}\in
V'\setminus V$, and we define
$$\mathcal  W= \{V\oplus \R u_{V,V'}\  | \ (V,V')\in \mathcal  V^2(d,m,k)
\mbox{ and } V\ne V' \}.$$
By  Lemma \ref{lem:finite dir},
both sets $\mathcal  V(d,m,k)$ and $\mathcal  W$ are finite, and there exists a vector space $W\in Gr(k-1,\Z^{m+k})$ such that
$$W\cap V=\{0\} \qquad \forall V\in  \mathcal  V(d,m,k)\cup \mathcal  W.$$
Let $\pi: \TP^{\xdim+\xcod} \to \TP^{\xdim+1}$ be the tropical map
induced by the linear projection
along $W$ in $\R^{m+k}$. Note that for any tropical variety $X$
of dimension $m$ and degree $d$ in $\TP^{m+k}$,
the degree  of $\pi(X)$ in $\TP^{m+1}$
is bounded from above by (and generically is equal to) a constant $D(d,W)$
that only depends on $d$ and $W$.
Since $W\cap V=\{0\}$ for any $V\in \mathcal  V(d,m,k)$, 
the dimension of $\pi(F)$ equals the one of
 $F$ for any facet $F$ of $X$.
The condition that $W\cap V=\{0\}$ for any $V\in \mathcal W$ guaranties
that different elements of $ \mathcal  V(d,m,k)$ have distinct images
under $\pi$.
From now on, we consider the lift to $X$ of 
the canonical polyhedral decomposition of $\pi(X)$.
By construction, 
the preimage $\pi^{-1}(F)$ of any open facet $F$ of $\pi(X)$
is the disjoint union of open facets of $X$.

Let $W'$ be any element of $Gr(k,\Z^{m+k})$ which contains $W$ and
such that $W'\cap V=\{0\}$ for any $V\in \mathcal  V(d,m,k)$. The 
tropical map $\pi':X \to \TP^{\xdim}$ induced 
by the linear projection along $W'$ is finite. Furthermore the
tropical degree  of $\pi'$
is bounded from above by (and generically is equal to) a constant
$D'(d,W')$
that
only depends on $d$ and $W'$.
By the tropical Bézout Theorem,   the
fibre $\pi'^{-1}(x)$ contains  at most $D'(d,W')$ points for any point $x$ in $\TP^m$. Since each fibre of $\pi_{|X}$ is contained in a fibre of $\pi'$, there are  at most $D'(d,W')$ points in $\pi^{-1}_{|X}(x)$ for any $x\in\pi(X)$.
 Hence, we deduce that the number of facets  of $X$, and so the number
 $B_m(\xdim,\xcod,d)$, is at most
 $$ D'(d,W')\cdot K(\xdim,D(d,W))<+\infty,$$
 where $K(\xdim,D(d,W))$ is the maximal number of facets of a tropical hypersurface of degree $D(d,W)$ in $\R^{m+1}$. This proves the proposition when $j=m$. 

 \medskip
 We prove the results for $j<m$ by induction on $m$. Let us denote respectively by $Sk^{m-1}(X)$ and $Sk^{m-1}(\pi(X))$ the  closure in $\TP^{m+k}$ and $\TP^{m+1}$ of the $(m-1)$-skeleton of $X\cap\R^{m+k}$  and $\pi(X)\cap\R^{m+1}$. 
 The stable self-intersection of $\pi(X)\cap\R^{m+1}$ in $\R^{m+1}$
 provides positive integer weights on $Sk^{m-1}(\pi(X))$,  turning
 this latter into a tropical subvariety of  $\TP^{m+1}$ of
 degree at most $D(d,W)^2$. In its turn, the stable intersection of
 $X\cap\R^{m+k}$ with  $\pi^{-1}(X)\cap\R^{m+k}$ provides positive integer weights on $Sk^{m-1}(X)$, turning it into a tropical subvariety of  $\TP^{m+k}$  whose degree is bounded by a number $\widetilde D(d,W)$ which only
 depends on $d$ and $W$. Since $X$ is obtained from $Sk^{m-1}(X)$ by
 attaching $m$-cells, we have
 $$B_j(\xdim,\xcod,d)\le B_j(\xdim-1,k+1,\widetilde D(d,W)).$$
 Since by assumption the number $B_j(\xdim-1,k+1,\widetilde D(d,W))$
 is finite, the proposition is proved.
\end{proof}

\subsection{Auxiliary statements}
The proof of Theorem~\ref{thm:genus curve} requires the following
several auxiliary lemmas that will be combined in Section
\ref{sec:proof genus curve}.
Given a tropical plane $L$ in $\TP^3$, we denote by
$Sk^i(L)$ the closure
in $\TP^3$
of the union of all faces of dimension $i$ of  $L\cap \R^3$.

Until the end of this section, we denote by $L_0$ the
tropical plane in $\TP^3$
defined by the tropical polynomial $\tg x+y+z+0\td$.
\begin{lemma}\label{lem:vertex H}
  Theorem \ref{thm:genus curve} holds if every  edge  of $C$ is either disjoint from $Sk^1(L)\setminus Sk^0(L)$ or contained in $Sk^1(L)$.
\end{lemma}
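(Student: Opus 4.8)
The plan is to use the fact that the tropical plane $L$ is a tropical modification of $\R^2$, and to deduce the bound $(d-1)\cdot(d-2)$ by applying the planar genus bound \cite[Proposition 2.10]{Mik12} twice: once to the image of $C$ under the modification, and once to the part of $C$ carried by the modification divisor. After a translation we may assume $L=L_0$. I would first consider the linear projection $\pi\colon\R^3\to\R^2$, $(x,y,z)\mapsto(x,y)$, whose kernel is the line directed by the ray $(0,0,-1)$ of $L_0$. On $L_0$ this map contracts that ray together with the three two-dimensional faces adjacent to it onto the standard tropical line $\Lambda\subset\R^2$ with rays $(-1,0),(0,-1),(1,1)$, and maps the three remaining faces isomorphically onto the three connected components of $\R^2\setminus\Lambda$. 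In other words $\pi$ exhibits $L_0$ as the tropical modification of $\R^2$ along $\Lambda$, with modification divisor $\Sigma:=\pi^{-1}(\Lambda)$ equal to the union of the three contracted faces, glued along the contracted ray.

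The hypothesis that every edge of $C$ is either disjoint from $Sk^1(L)\setminus Sk^0(L)$ or contained in $Sk^1(L)$ ensures that $\pi|_C$ is a tame map of graphs: it is injective over $\R^2\setminus\Lambda$, and $C$ meets $\Sigma$ only along edges contained in $\Sigma$ or at vertices lying on the three boundary rays of $\Sigma$. I would then write $C=A\cup B$, where $A$ is the part of $C$ in the three faces mapped isomorphically by $\pi$, and $B=C\cap\Sigma$. The overlap $A\cap B$ is contained in the intersection of $C$ with the three boundary rays of $\Sigma$ and the central vertex, hence in a tree; thus $A\cap B$ is a forest and $b_1(A\cap B)=0$. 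The restriction $\pi|_A$ is injective with image a subgraph of the planar tropical curve $\bar C:=\pi(C)$, which has degree $d$ by the $4d$-unbounded-edges assumption. Since $H_1$ of a subgraph of a $1$-complex injects into $H_1$ of the whole, the planar bound gives $b_1(A)\le b_1(\bar C)\le \frac{(d-1)\cdot(d-2)}{2}$.

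It then remains to bound the genus carried by the modification divisor. Using additivity of the Euler characteristic for $C=A\cup B$ together with $b_1(A\cap B)=0$, one gets $b_1(C)=b_1(A)+b_1(B)+\varepsilon$, where $\varepsilon\ge 0$ is the Mayer--Vietoris correction measuring how the connected components of $A\cap B$ get identified inside $A$ and $B$. I would show that $b_1(B)+\varepsilon\le\frac{(d-1)\cdot(d-2)}{2}$ as well; here the $4d$-unbounded-edges hypothesis is again crucial, as it forces exactly $d$ unbounded edges in the contracted direction $(0,0,-1)$, giving the curve inside $\Sigma$ \emph{degree} $d$. Concretely I would bound the bounded complementary regions of $B$ in $\Sigma$ by those of a degree-$d$ planar curve, accounting at the same time for the loops created when $B$ is attached to $A$ along the boundary rays. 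Combining the two estimates yields $b_1(C)\le (d-1)\cdot(d-2)$.

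The step I expect to be the main obstacle is precisely this control of $b_1(B)+\varepsilon$. The divisor $\Sigma$ is a union of three two-dimensional faces glued along a single ray --- a \emph{three-page book} --- so, unlike $A$, it admits no injective linear projection to the plane, and its genus cannot be read off directly from a single planar curve. Bounding it, while simultaneously keeping track of the non-negative gluing term $\varepsilon$ and making sure that the two halves together do not exceed $(d-1)\cdot(d-2)$, is the delicate combinatorial heart of the argument.
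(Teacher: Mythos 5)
Your first half is sound, and in fact coincides with one instance of the paper's own argument: under the hypothesis, an edge of $C$ can meet $Sk^1(L)\setminus Sk^0(L)$ only if it is contained in $Sk^1(L)$, so the parts of $C$ lying in the open facets attach to the rest of $C$ only at the vertex of $L$; consequently the projection along $(0,0,-1)$ maps your $A$ homeomorphically onto a subgraph of a degree-$d$ plane tropical curve, and $b_1(A)\le\frac{(d-1)\cdot(d-2)}{2}$ follows from \cite[Proposition 2.10]{Mik12}. The gap is the second half, the bound $b_1(B)+\varepsilon\le\frac{(d-1)\cdot(d-2)}{2}$ for the part carried by the three ``book'' pages, which you acknowledge you do not know how to prove. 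This is a genuine missing idea, not a routine verification: no single linear projection makes all three pages of the book non-degenerate, and the available projection estimates do not give your target inequality. Each of the projections along $(-1,0,0)$, $(0,-1,0)$, $(1,1,1)$ captures only two of the three pages, so together they yield only $b_1(B)\le\frac{3}{4}\,(d-1)\cdot(d-2)$; your intermediate claim, which asserts that the book can never carry more than half of the allowed total genus, is strictly stronger than this, nothing in your outline indicates how to obtain it, and it is not clear it is even true.

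The paper sidesteps this asymmetry entirely by symmetrising over all four directions instead of fixing one. Using the hypothesis it first writes $b_1(C)=\sum_F b_1(C_F)$, where $C_F=C\cap F$ and $F$ runs over the facets of $L$ (this is your $\varepsilon=0$, which indeed holds because interior parts of $C$ attach to $Sk^1(L)$ only at the vertex). Then, for \emph{each} direction $s$ among $(-1,0,0),(0,-1,0),(0,0,-1),(1,1,1)$ along which $L$ is not a cylinder, the degree-one projection $\pi_s:L\to\TP^2$ maps the facets not containing $s$ isomorphically, giving $\sum_{F\not\ni s}b_1(C_F)=b_1(\pi_s(C))\le\frac{(d-1)\cdot(d-2)}{2}$. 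Since every facet of $L$ contains exactly two of the four ray directions, each $b_1(C_F)$ appears in exactly two of these four inequalities; summing them gives $2g(C)\le 4\cdot\frac{(d-1)\cdot(d-2)}{2}$, i.e.\ the desired bound. The idea you were missing is that the factor $2$ in the target should come from double-counting every facet over several projections, rather than from splitting $C$ once into two halves and bounding each half separately by the planar bound.
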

\begin{proof}
Denote by $C_F$ the intersection of $C$ with a facet $F$ of $L$.  By
assumption,  one has
$$b_1(C)=\sum_F b_1(C_F),$$
where the sum runs all over the facets of $L$.
Let $\sigma$ be the number of the directions  $(0,0,-1)$, $(0,-1,0)$,
$(-1,0,0)$, and $(1,1,1)$ along which $L$ is not a cylinder, and let
$s$ be one of these four directions.
The projection along  $s$ defines a  degree one tropical map $\pi_s:L\to\TP^2$, and by assumption, one has
 $$\sum_F b_1(C_F) = b_1(\pi_s(C))\le \frac{(d-1)\cdot (d-2)}{2},$$
 where the sum runs over all facets of  $L$ not containing the direction $s$.
 Considering all
 possible directions $s$, each $C_F$ contributes to the first
Betti number of exactly two projections $\pi_s(C)$, so we get
$$2g(C)\le \sigma \cdot \frac{(d-1)\cdot(d-2)}{2}\le 4\cdot \frac{(d-1)\cdot(d-2)}{2}, $$
which is the desired result.
\end{proof}

Given a tropical curve $C$ in $\TP^\damb$ and $p\in C$, we denote by
$\val_p(C)$ the valency of $C$ at $p$, and by $\Ed_p(C)$ the set of edges
of $C$ adjacent to $p$ (viewed as a vertex of $C$). We also
define $C^0=C\cap \R^n$ and 
$C^{\infty}=C\cap \left( \TP^n\setminus\R^n \right)$.
If $C$ is furthermore contained in a tropical plane $L$, we denote by
$\Ed^2(C)$ the set of edges of $C$ that are not contained in $Sk^1(L)$.
 The tropical curve $C$ is called a
\emph{fan} tropical curve with vertex $v$ if the  support of $C$
is the closure in $\TP^3$ of rays in $\R^3$ all emanating from $v$.

 \smallskip
The proofs of next Lemmas and of Theorem \ref{thm:genus curve}
extensively use tropical intersection theory of tropical curves in
tropical surfaces, for which 
we use  the presentation given in
\cite[Section 3]{Br17} and \cite[Section 6.2]{BIMS15}. For the reader
convenience, we recall informally the definition of the local
self-intersection $C^2_p$ at a point $p$ of a tropical curve $C$ in $L_0$
with no irredducible components in $\TP^3\setminus \R^3$.
Recall that each facet $F$ of $L_0$ contains a unique corner point
(i.e.  with coordinates $(-\infty,-\infty)$ in an affine tropical chart
$[-\infty;+\infty[^2$) that we
denote by $q_F$. 
There are several cases to consider to define $C^2_p$, depending on the location of the
point $p$ in $L_0$:
\begin{itemize}
\item  $p$ is contained in $C^0\setminus Sk^1(L_0)$:  the tropical plane $L_0$ is then
  locally given at $p$ by the affine tropical chart $\R^2$, and 
  $C^2_p$ is
  defined as the stable intersection of $C$ at $p$ in $L_0$ \cite{RGJS05};

  \item $p$ is contained in $Sk^1(L_0)\setminus Sk^0(L_0)$:
    the curve $C$ can be deformed in $L_0$ locally at $p$ into a tropical curve $\widetilde C$
    intersecting $C$ in $L_0\setminus Sk^1(L_0)$, see Figure
    \ref{fig:selfinter}; we define $C^2_p$ as the sum of the
    tropical intersection multiplicity of intersection points of $C$
    and $\widetilde C$ that are close to $p$ (depicted in black dot
    points in Figure \ref{fig:selfinter}b,  note that $C^2_p=0$ if
    $p\in C^\infty$);
   \begin{figure}[h!]
\begin{center}
\begin{tabular}{ccc}
  \includegraphics[height=4cm, angle=0]{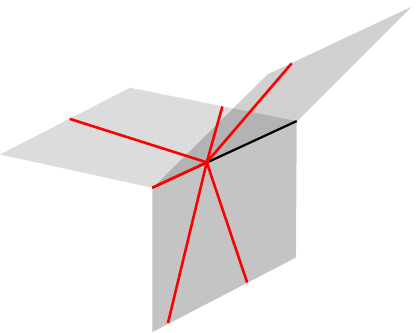}
  \put(-130,75){$C$}
&\includegraphics[height=4cm, angle=0]{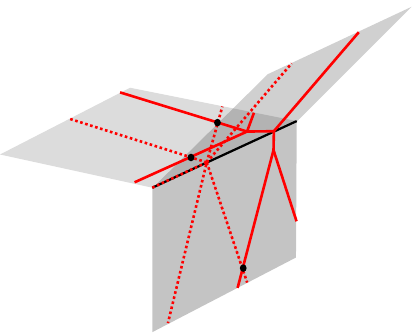}
  \put(-130,75){$C$}
  \put(-115,85){$\widetilde C$}
&\includegraphics[height=4cm, angle=0]{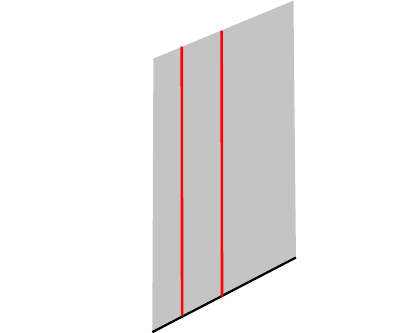}
  \put(-60,90){$\widetilde C$}
  \put(-90,100){$C$}

\\
\\ a) $p\in C^0\cap Sk^1(L_0)\setminus Sk^0(L_0)$  & b)
&  c) $p\in C^\infty$
\end{tabular}
\end{center}
\caption{}
\label{fig:selfinter}
\end{figure}

  \item $p=q_F$ for some facet $F$ of $L_0$: in an affine tropical
    chart of $L_0$ at  $q_F$, the tropical curve $C$ is defined
        by a tropical polynomial $P(x,y)$; denoting by $\Delta_{q_F}(C)$ the
        Newton polygon of $P(x,y)$, by $\overline\Delta_{q_F}(C)$ the convex
        hull of $\Delta_{q_F}(C)\cup\{(0,0)\}$, and by
        $\Gamma_{q_F}(C)=\overline\Delta_{q_F}(C)\setminus \Delta_{q_F}(C)$, we define
        \[
        C^2_{q_F}=Area (\Gamma_{q_F}(C)),
        \]
        where $Area$ stands for twice the Euclidean area;

      \item
        $p$ is the origin in $\R^3$: denote by $\widetilde C$ the fan
        tropical curve that coincide with $C$ in a neighborhood of
        $p$, and  by $d$ the degree of $\widetilde C$; we define
        \[
        C^2_p=d^2 -\sum_{F\mbox{ facet of }L_0}  \widetilde C^2_{q_F}.
        \]
\end{itemize}

Note that we have $C^2_p\ge 0$ whenever  $p$ is not the origin. Next lemma
provides a lower bound for $C^2_0$. Recall that each edge $e$ of
a tropical curve is equipped with a weight $w_e\in \Z_{>0}$.

\begin{lemma}\label{lem:vertex H2}
  Let $C\subset L_0$ be a fan tropical curve of degree $d$ with vertex the origin. Then one has
  $$C_{0}^2+\sum_{e\in \Ed^2(C)} (w_e-1) -\val_{0}(C)\ge -d^2 +2d-4. $$
\end{lemma}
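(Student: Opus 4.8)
The plan is to turn the inequality into an \emph{upper} bound on the corner self-intersections and then to control that bound through the tropical Bézout theorem. First I would use that $C$ is itself a fan curve, so in the last bullet of the definition of $C^2_p$ one has $\widetilde C=C$ and hence $C_0^2=d^2-\sum_F C^2_{q_F}$, the sum ranging over the six facets of $L_0$. Substituting, the asserted inequality becomes equivalent to
\[
\sum_F C^2_{q_F}+\val_0(C)-\sum_{e\in\Ed^2(C)}(w_e-1)\ \le\ 2d^2-2d+4 .
\]
Writing $N$ for the number of edges of $C$ contained in $Sk^1(L_0)$ (so $N\le 4$, since these lie along the four rays of $L_0$ in distinct directions), the valency/weight combination simplifies to $\val_0(C)-\sum_{e\in\Ed^2(C)}(w_e-1)=N+\sum_{e\in\Ed^2(C)}(2-w_e)$, which I will keep track of \emph{exactly}, since the target bound turns out to be sharp.

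Next I would compute each $C^2_{q_F}$ explicitly. Fixing a facet $F=F_{ij}$ spanned by $\rho_i,\rho_j$, I work in the affine chart at $q_F$ in which these two rays become the negative coordinate half-axes and $q_F=(-\infty,-\infty)$; the origin of $\R^3$ sits at a finite point and the edges of $\Ed^2(C)\cap F$ point into the open third quadrant, towards $q_F$. The key observation is that $\Gamma_{q_F}(C)$ is exactly the region enclosed between $(0,0)$ and the lower-left boundary of $\Delta_{q_F}(C)$, whose edges are dual to the interior edges of $C$ in $F$. Ordering these edges $e_1,\dots,e_s$ by slope, with primitive chart-directions $(-a_t,-b_t)$ and weights $w_t$, a shoelace computation should give
\[
C^2_{q_F}=\sum_t a_tb_tw_t^2+2\sum_{t<s}a_tb_sw_tw_s\ \le\ P_i^{(F)}P_j^{(F)},\qquad P_i^{(F)}:=\sum_{e\in\Ed^2(C)\cap F}a_ew_e ,
\]
the defect $P_i^{(F)}P_j^{(F)}-C^2_{q_F}=\sum_{t<s}(a_sb_t-a_tb_s)w_tw_s$ being a nonnegative sum of lattice determinants. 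Here $P_i^{(F)}$ is precisely the local intersection multiplicity of the interior edges of $F$ with the ray at infinity $\ell_i$ of $L_0$.

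I would then feed in the degree through tropical Bézout: the total intersection of $C$ with each $\ell_i$ equals $d$, giving four balancing-type relations $m_i+\sum_{j\ne i}P_i^{(F_{ij})}=d$, where $m_i$ is the weight of $C$ along $\rho_i$. Regarding the numbers $P_i^{(F_{ij})}$ as weights on the complete graph $K_4$ with prescribed vertex sums $\le d$, a convexity (perfect-matching) estimate — using $P_i^{(F)}P_j^{(F)}\le\frac d2\bigl(P_i^{(F)}+P_j^{(F)}\bigr)$ — yields $\sum_F P_i^{(F)}P_j^{(F)}\le 2d^2-\frac d2\sum_i m_i$, with equality for a single matching such as $\{F_{12},F_{34}\}$. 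Combining the three displays reduces the whole statement to the single combinatorial inequality $N+\sum_{e\in\Ed^2(C)}(2-w_e)-\sum_F\mathrm{def}_F\le \frac d2\sum_i m_i-2d+4$.

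I expect this final assembly to be the main obstacle. The bound is attained — for instance by the two-edge curve with edges of directions $(-1,-1,0)$ and $(1,1,0)$ and weight $d$, where $N=0$, all defects vanish, and both sides equal $4-2d$ — so the self-intersection defects $\mathrm{def}_F$, the count $N\le 4$ of rays in $Sk^1(L_0)$, and the weight excess $\sum_{e\in\Ed^2(C)}(2-w_e)$ must be balanced against one another \emph{exactly}, using the four relations $m_i+\sum_j P_i^{(F_{ij})}=d$ rather than any lossy estimate; any place where I discard a defect or replace $C^2_{q_F}$ by $P_i^{(F)}P_j^{(F)}$ too early breaks the sharp cases. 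A secondary technical point to nail down carefully is the canonical position of $\Delta_{q_F}(C)$ in the exponent lattice (that its lower-left vertices meet the two coordinate axes at heights $P_j^{(F)}$ and $P_i^{(F)}$), since this is exactly what legitimises both the shoelace formula and the inequality $C^2_{q_F}\le P_i^{(F)}P_j^{(F)}$.
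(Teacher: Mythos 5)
Your preparatory steps are all correct: the identity $C_0^2=d^2-\sum_F C^2_{q_F}$ for a fan curve, the rewriting $\val_0(C)-\sum_{e\in\Ed^2(C)}(w_e-1)=N+\sum_{e\in\Ed^2(C)}(2-w_e)$, the shoelace formula and the bound $C^2_{q_F}\le P_i^{(F)}P_j^{(F)}$, and the four relations $m_i+\sum_{j\ne i}P_i^{(F_{ij})}=d$. The genuine gap is at the end: the ``single combinatorial inequality'' you reduce everything to is \emph{false}, so the reduction cannot be completed. Take $C$ to be the union of two tropical lines through the origin, i.e.\ the degree-$2$ fan curve with four weight-$1$ edges in directions $(-1,-1,0)$, $(1,1,0)$, $(-1,0,-1)$, $(1,0,1)$, one in each of the facets $F_{12},F_{34},F_{13},F_{24}$. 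Here $m_i=0$ and $N=0$, each occupied facet carries a single edge of chart direction $(-1,-1)$, so $C^2_{q_F}=P_i^{(F)}P_j^{(F)}=1$ and every defect vanishes, while $\sum_e(2-w_e)=4$. Your final inequality reads $4\le \frac d2\sum_i m_i-2d+4=0$, which is false --- yet the lemma holds for this curve, with equality: $C_0^2=4-4=0$ and $0+0-4=-4=-d^2+2d-4$. The culprit is the convexity step $P_i^{(F)}P_j^{(F)}\le\frac d2\bigl(P_i^{(F)}+P_j^{(F)}\bigr)$, which loses $1$ on each of the four facets, exactly the margin needed. Moreover your other lossy step is fatal on its own as well: for the degree-$3$ fan curve with weight-$1$ edges of chart directions $(1,2)$ and $(2,1)$ in each of $F_{12}$ and $F_{34}$, the lemma again holds with equality and $\sum_F C^2_{q_F}=12$, while $\sum_F P_i^{(F)}P_j^{(F)}=18$, overshooting by exactly the total defect $6$. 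So equality cases are scattered throughout the constraint set, and no argument of the proposed shape (replace $C^2_{q_F}$ by a product of multiplicities, then optimize the products under the Bézout relations) can survive them; what remains is precisely the exact optimization problem you started from, which your outline does not solve.

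For comparison, the paper avoids all corner-by-corner Newton-polygon bookkeeping and argues globally: it perturbs $C$ away from the origin (via convex triangulations of the polygons $\Gamma_{q_F}(C)$) into a curve $\widetilde C$ of the same degree, still inside $L_0$, whose vertices other than the origin are trivalent, so that $\widetilde C$ satisfies the hypothesis of Lemma \ref{lem:vertex H}. It then combines the Euler-characteristic formula $2g(\widetilde C)=\sum_v(\val_v(\widetilde C)-2)+2-|\widetilde C^\infty|$, the genus bound $g(\widetilde C)\le (d-1)\cdot(d-2)$ furnished by Lemma \ref{lem:vertex H} through the four coordinate projections, the identity $C^2_0=d^2-\sum_{v\ne 0}\widetilde C^2_v$, and $|\widetilde C^\infty|\le 4d$. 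If you wish to salvage your approach, you need a global input of this kind that controls all corners simultaneously; the four Bézout relations alone are insufficient, as the first counterexample shows.
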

\begin{proof}
  Let $\widetilde C$ be a perturbation of $C$ outside a neighbourhood of
the origin into a tropical
curve of degree $d$ such that
\begin{itemize}
\item $\widetilde C$ is still contained in $L_0$;
\item any  vertex $v$ of $\widetilde C^0$ distinct from and not
adjacent to the origin (resp. connected to the origin by
an edge $e$) is trivalent and
$$\widetilde C^2_v= 1 \qquad (\mbox{resp.}\ \widetilde C^2_v= w_e).$$
\item $\widetilde C\cap\R^3$ has unbounded edges only in the standard
  directions $(0,0,-1)$, $(0,-1,0)$,
$(-1,0,0)$, and $(1,1,1)$.
\end{itemize}
Such  perturbation $\widetilde C$ exists: it suffices to
perturb $C$ in a neighborhood of each point $q_F$ according to any
convex triangulation of $\Gamma_{q_F}(C)$ such that each edge of
$\Gamma_{q_F}(C)\cap \Delta_{q_F}(C)$ is the edge of a triangle, and
 containing the maximal number of triangles among all triangulations satisfying this condition.
An elementary Euler characteristic computation gives
\begin{equation}\label{eq:euler}
  2 g(\widetilde C)=\sum_{v\in \widetilde C^0}(\val_v(\widetilde C)-2) +2 -\left|\widetilde C^\infty\right|.
  \end{equation}
Next, by \cite[Definition 3.6]{Br17} we have
\begin{align*}
  C^2_{0}&=d^2- \sum_{v\in \widetilde C^0\setminus \{0\}} \widetilde{C}^2_{v}
  \\ &=d^2- \sum_{v\in \widetilde C^0\setminus \{0\}}(\val_v(\widetilde C)-2)   -\sum_{e\in \Ed^2(C)} (w_e-1),
\end{align*}
from which we deduce that
$$\sum_{v\in\widetilde C^0}(\val_v(\widetilde C)-2) = d^2- C^2_{0} -\sum_{e\in \Ed^2(C)} (w_e-1)+\val_{0}(C)-2.$$
Just as $C$, the tropical curve $\widetilde C$  satisfies to the
hypothesis of Lemma \ref{lem:vertex H}. Hence
combining this latter identity together with  Lemma \ref{lem:vertex H}
and equation $(\ref{eq:euler})$, we obtain
$$d^2- C^2_{0}
-\sum_{e\in \Ed^2(C)} (w_e-1)+\val_{0}(C)- |\widetilde C^\infty| \le 2d^2-6d+4. $$
Now the result follows from the fact that $|\widetilde C^\infty|\le 4d$.
\end{proof}

\begin{lemma}\label{lem:vertex 1sk}
Let $L\subset\TP^3$ be a  tropical plane, and let
$C\subset L$ be a 
fan tropical 
curve with vertex $v_0$ contained in $Sk^1(L)\setminus Sk^0(L)$.  Then one has
$$C_{v_0}^2 \ge \val_{v_0}(C) -3. $$
\end{lemma}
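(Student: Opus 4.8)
The plan is to reduce to a standard local model and then to resolve the vertex $v_0$ one edge at a time. Since $v_0\in Sk^1(L)\setminus Sk^0(L)$ and $L$ is a tropical plane, a neighbourhood of $v_0$ in $L$ is integer-affinely isomorphic to $\R\times Y$, where $Y\subset\R^2$ is a standard tropical line with its three rays $g_1,g_2,g_3$ satisfying $g_1+g_2+g_3=0$. Thus I may assume $L=L_0$, that $v_0$ lies in the interior of one of the four rays of $Sk^1(L_0)$, and that the three facets $F_1,F_2,F_3$ adjacent to the edge $e\ni v_0$ correspond to the three rays $g_j$. Writing the edges of $C$ at $v_0$ as primitive vectors $u_1,\dots,u_n$ (with $n=\val_{v_0}(C)$) and weights $w_i$, each $u_i$ either points along $e$ (so $u_i=\pm(0,0,1)$) or has transverse part a positive multiple $c_ig_{j(i)}$ of some $g_{j(i)}$. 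Projecting the balancing condition to the transverse plane and using $g_1+g_2+g_3=0$ shows that $C$ meets each facet with the same total transverse weight $\delta=\sum_{j(i)=1}w_ic_i=\sum_{j(i)=2}w_ic_i=\sum_{j(i)=3}w_ic_i$; in particular at least one edge of $C$ enters each of the three facets.

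Since $C^2_{v_0}$ depends only on the germ of $C$ at $v_0$, namely on the edge directions and weights there, I would argue by induction on this valency $n$. When $n\le 3$ the inequality is exactly the nonnegativity $C^2_{v_0}\ge 0$ recorded above for points distinct from the origin, so suppose $n\ge4$. As at least one edge enters each facet, the remaining edges force either two edges in a common facet $F_j$ or an edge along $e$ together with a facet edge; in either case I split $v_0$ by detaching a small trivalent vertex $w$ carrying two suitably chosen edges, joined to a new $(n-1)$-valent vertex $v_0'\in e$ by a short bounded edge contained in $L$ (its direction being dictated by balancing at $w$). This deforms $C$ only near $v_0$ into a tropical curve $C''$ of the same class. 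Because the local self-intersection is invariant under such deformations and is computed locally, $C^2_{v_0}$ equals the sum of the contributions of $C''$ near $v_0$, namely $(C'')^2_{w}+(C'')^2_{v_0'}$ together with the intersection points created near the new bounded edge. The vertex $w$ sits at a smooth point of $L$ in a facet interior, so $(C'')^2_{w}\ge 0$, while the germ at $v_0'$ is again that of a fan vertex on $Sk^1(L)\setminus Sk^0(L)$ of valency $n-1$, to which the induction hypothesis applies and gives $(C'')^2_{v_0'}\ge (n-1)-3$.

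The heart of the argument, and the step I expect to be the main obstacle, is to show that detaching the trivalent vertex $w$ contributes at least $1$ to the self-intersection, that is, that the perturbation $\widetilde{C''}$ of $C''$ used in the definition of $C^2_\bullet$ necessarily crosses $C''$ once more near the new bounded edge, with lattice multiplicity at least $1$. This should follow by arranging the splitting so that, when the deformed curve is pushed off $Sk^1(L)$, the germ at $w$ and its perturbed copy meet transversally in the interior of $F_j$; but establishing it cleanly requires a careful choice of which two edges to group and in which facet to place $w$, a control of the intersection multiplicities in terms of the determinants $\det(u_a,u_b)$ and the weights, and a correct bookkeeping of the intersection points that migrate onto the edge $e$ and must be resolved by the perturbation. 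Granting this unit contribution, the three terms combine to $C^2_{v_0}\ge 0+\big((n-1)-3\big)+1=n-3=\val_{v_0}(C)-3$, completing the induction. An alternative to the inductive peeling would be a single global perturbation of $C$ within $L$ into a trivalent curve and a direct count of $C\cap\widetilde C$ organised over the three facets; this leads to the same essential task of matching the $n-3$ excess edges to intersection points, so I expect the multiplicity bookkeeping to be the key difficulty in either route.
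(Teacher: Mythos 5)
Your reduction to the local model $L\cong\R\times Y$, the splitting construction, and the conservation statement $C^2_{v_0}=\sum_v (C'')^2_v$ are all sound, and your inductive peeling is essentially a step-by-step version of what the paper does in one shot. But the step you yourself flag as the "main obstacle" is a genuine gap, and moreover the way you propose to fill it cannot work. By the very conservation-under-deformation fact you invoke (the same one the paper uses, $C^2_{v_0}=\sum_{v\in\widetilde C^0}\widetilde C^2_v$), the local self-intersection near $v_0$ is exactly the sum of the contributions at the \emph{points} of $C''$, and points in the interior of an edge lying in the interior of a facet contribute zero. So there are no "intersection points created near the new bounded edge" to be harvested: the extra term you hope for is identically $0$, and your accounting $0+\bigl((n-1)-3\bigr)+1$ has no source for the $+1$.

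The missing ingredient is precisely the one the paper uses, namely a Pick-formula lower bound at facet vertices: a trivalent vertex $w$ of a tropical curve lying in the smooth ($\R^2$-) part of $L$ has stable self-intersection
\[
(C'')^2_w\ \ge\ \sum_{e\in\Ed_w(C'')}w_e-2\ \ge\ 1,
\]
not merely $(C'')^2_w\ge 0$ as you grant. With this single correction your induction closes with no extra term: $C^2_{v_0}=(C'')^2_w+(C'')^2_{v_0'}\ge 1+\bigl((n-1)-3\bigr)=n-3$. For comparison, the paper avoids the induction altogether: it perturbs $C$ inside $L$ once, into a curve $\widetilde C$ meeting $Sk^1(L)$ in a single trivalent vertex $\widetilde v_0$ (which contributes $0=\val_{\widetilde v_0}-3$), bounds every other vertex contribution from below by $\val_v-2$ via Pick, and concludes by an Euler-characteristic count of valencies against the $\val_{v_0}(C)$ unbounded directions. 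Either route goes through, but only once the unit (in fact $\ge\sum w_e-2$) contribution of trivalent facet vertices is established; that, and not the bookkeeping of migrating intersection points, is the heart of the matter.
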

\begin{proof}
  The lemma is true if $v_0$ is a trivalent vertex of $C$ since in
  this case $C_{v_0}^2=0$. If $v_0$ is not a trivalent vertex of $C$,
  then  we  perturb $C$
as depicted in Figure \ref{fig:selfinter}a and b,
  into a tropical curve $\widetilde C$ such
  that
\begin{itemize}
  \item $\widetilde C$ is contained in $L$;
  \item $C$ and $\widetilde C$ have the same directions of unbounded edges;
    \item $\widetilde C$ intersects $Sk^1(L)$ in a
      single trivalent vertex $\widetilde v_0$.
\end{itemize}
We have
$$C_{v_0}^2=\sum_{v\in \widetilde C^0}\widetilde C_v^2.$$
Furthermore if $v\ne v_0$ and $v$ is not a
2-valent point of $\widetilde C$, it follows from Pick Formula that 
$$\widetilde C_v^2\ge \sum_{e\in\Ed_v(\widetilde C)}w_e -2.$$
 Furthermore we have 
$$\widetilde C_{\widetilde v_0}^2 =0=\val_{\widetilde v_0}(\widetilde C) -3,$$
 and the result follows.
\end{proof}

 \subsection{Proof of Theorem \ref{thm:genus curve}}\label{sec:proof genus curve}
 Let us denote by $L\subset \TP^3$ a tropical plane containing $C$,
 and by $a$  the number of vertices of $C$ which are contained
 in $Sk^1(L)\setminus Sk^0(L)$.
 We claim that 
\begin{equation}\label{equ:ineq val}
\sum_{v\in C^0}\left(\val_v(C) -2 \right)\le 2d^2 -2d+ 2.
 \end{equation} 
Assuming that this inequality holds,
we have 
\begin{align*}
  2g(C)&=\sum_{v\in C^0}(\val_v(C)-2) +2- | C^\infty| \\
  &\le 2d^2 -6d+4 \\
  &\le 2(d-1)\cdot(d-2).
  \end{align*}

Hence it remains to prove Inequality $(\ref{equ:ineq val})$.
Suppose first that $C$ does not pass through $Sk^0(L)$. 

The
self-intersection of $C$ in $L$ is equal to $d^2$,
hence it follows from Pick Formula and Lemma \ref{lem:vertex 1sk} that
\begin{align*}
  d^2&=\sum_{v\in C^0} C^2_v 
  \\ &\ge \sum_{v\in C\setminus Sk^1(L)}\left(\sum_{e\in\Ed_v(C)}w_e
  -2 \right)   + \sum_{v\in C^0\cap Sk^1(L)}\left(\val_{v}(C) -3 \right) 
  \\ \\ & \ge \sum_{v\in C^0}\left(\val_v(C) -2 \right) -a.
  \end{align*}
Since $a \le d$, we have $d^2+a\le 2d^2 -2d+ 2$ and Inequality
$(\ref{equ:ineq val})$ holds.

Suppose now that $C$  passes through  $Sk^0(L)$. 
Again, it follows from Pick Formula and Lemma \ref{lem:vertex 1sk} that
\begin{align*}
  d^2&=\sum_{v\in C^0\setminus Sk^1(L)} C^2_v +
  \sum_{v\in C^0\cap Sk^1(L)\setminus\{0\}} C^2_v + C^2_{0} 
  \\ \\ &\ge \sum_{v\in C^0\setminus Sk^1(L)}
  \left(\sum_{e\in\Ed_v(C)}w_e -2 \right)   +
 \sum_{v\in C^0\cap Sk^1(L)\setminus\{0\}}\left(\val_{v}(C) -3 \right)  +
  C^2_{0}
  \\ \\ & \ge \sum_{v\in C^0\setminus  \{0\}}
  \left(\val_v(C) -2 \right) -a + \sum_{e\in \Ed^2_{0}(C)}
  (w_e-1)+   C^2_{0}
 \\ \\ &\ge \sum_{v\in C^0}\left(\val_v(C) -2 \right)-a + \sum_{e\in \Ed^2_{0}(C)}
  (w_e-1) +   C^2_{0} -\val_{0}(C)+2.
  \end{align*}
Denoting by $d_0$ the local intersection number of
 $C$ and $Sk^1(L)$ at the origin,
it follows from Lemma \ref{lem:vertex H2} that
$$\sum_{v\in C^0}\left(\val_v(C) -2 \right)\le  d^2
+ d_0^2-2d_0+2 +a.$$
Since the total
intersection number of $C$ and $Sk^1(L)$
is equal to $d$, and that each local intersection multiplicity on
$Sk^1(L)\setminus \{0\}$ 
is
positive, we deduce that $d_0\le d-a$ and $a\le d-1$. In particular we
have
$d_0^2-2d_0\le (d-a)^2-2(d-a)$, and 
\begin{align*}
  \sum_{v\in C^0}\left(\val_v(C) -2 \right) &\le 2d^2 -2d+ 2  +a\cdot (a-2d
  +3)
  \\ &\le 2d^2 -2d+
  2, 
\end{align*}
i.e. Inequality
$(\ref{equ:ineq val})$ holds in this case as well.
\hfill \BasicTree[1.3]{orange!97!black}{green!90!white}{green!50!white}{leaf}

 \section{Floor composition}\label{Sec:composition}
We describe a method of construction of tropical varieties which we
will use in Section~\ref{Sec:lower} to exhibit tropical varieties with
large top Betti numbers. This method originates in the floor
decomposition technique introduced by Brugall\'e and Mikhalkin
(\cite{BruMikh07, BruMikh08,Br6}), whose roots can in their turn be traced back to 
earlier ideas by Mikhalkin. A floor composed tropical variety is
a $\xdim$-dimensional tropical variety in $\R^{\damb+1}$ which is built out of
the data of a collection of $\xdim$-dimensional varieties in $\R^\damb$
together with some effective divisors on elements of this collection. 
In the case when the varieties and the divisors involved are homology 
bouquets of spheres, we express the Betti numbers of the floor composed 
 variety in term of  those of the construction's data.  

\subsection{Tropical birational modifications}
Here we slightly generalise the notion of
\emph{tropical modifications} introduced in \cite{Mik06} and further
developped in \cite{BrLop,Sha13,Sha13-2,Br18,CuetoMarkwig}.
Let $X$ be a tropical subvariety in $\R^n$. Recall (see for example
\cite[Section 5.6]{BIMS15} or \cite[Section 4.4]{MikRau19}) that to 
a tropical rational function $f:X\to\R$ corresponds its divisor $\div_X (f)$ which is a codimension 
one tropical cycle on $X$. 
We denote $\Gamma_f(X)\subset X\times\R\subset \R^n\times\R$ the graph of $f$ with weights inherited from $X$. 
Given  a closed polyhedron  $F$ in $\R^n\times\R$  equipped with a weight $w_F$, we denote by 
 $F^-$ (resp. $F^+$) the polyhedral cell $F - \R_{\ge 0}(0,\cdots,0,1)$ (resp. $F + \R_{\ge 0}(0,\cdots,0,1)$) equipped with the weight $w_F$. 

\begin{defi}\label{Def:Modification}
  Let $X$ be a tropical variety in $\R^n$, and  $f:X\to\R$
  be a tropical rational function. Suppose that there exist
   two effective tropical divisors $D_+$ and $D_-$ on $X$ such that
   $\div_X (f)  =D_+- D_-$. 
   The tropical variety $\widetilde X$ in $\R^{n+1}$ defined by
   $$\widetilde X=\Gamma_f(X)\ \bigcup \ \Gamma_f(D_+)^-\
   \bigcup\ \Gamma_f(D_-)^+$$ 
   is called a \emph{birational tropical modification} of $X$ along the
   divisor $D_+- D_-$. If $D_-=  \emptyset$, then $\widetilde X$ is called a \emph{tropical modification} of $X$ 
   along  $D_+$. 
\end{defi}

Our definition of tropical modification coincides with the definition 
from \cite{Mik06,Sha13,Sha13-2,BIMS15}.

\begin{exa}
The tropical line $L$ in $\R^2$ defined by the tropical polynomial
$\tg x+y+0 \td$ is a  tropical modification of $\R$ along $0$.
The tropical plane in $\R^3$ defined by the tropical polynomial
$\tg x+y+z+0 \td$ is a  tropical modification of $\R^2$ along the line $L$.
More generally, any  tropical linear space of dimension $m$ in $\R^n$
  can be obtained from $\R^m$ by
  a sequence of tropical modifications along   tropical linear spaces of dimension $m-1$.
\end{exa}

\begin{exa}
  The tropical surface in $\R^3$ defined by the tropical polynomial
  $\tg (y+0)z +x+0\td$ is a birational tropical modification of $\R^2$
  along $L_+-L_-$, where $L_+$ (resp. $L-$)
  is the tropical line in $\R^2$ defined
  by the tropical polynomial $\tg x+0\td$ (resp. $\tg y+0\td$), 
  see Figure~\ref{fig:BlowUp}. 
  This surface may be thought as an open part of the blow-up of $\TP^2$
  at the point $(0,0)$, the line $(0,0)\times\R$ corresponding to an
  open part of the exceptional divisor. 
\end{exa}
  
    \begin{figure}[h!]
\begin{center}
\includegraphics[height=7.5cm, angle=0]{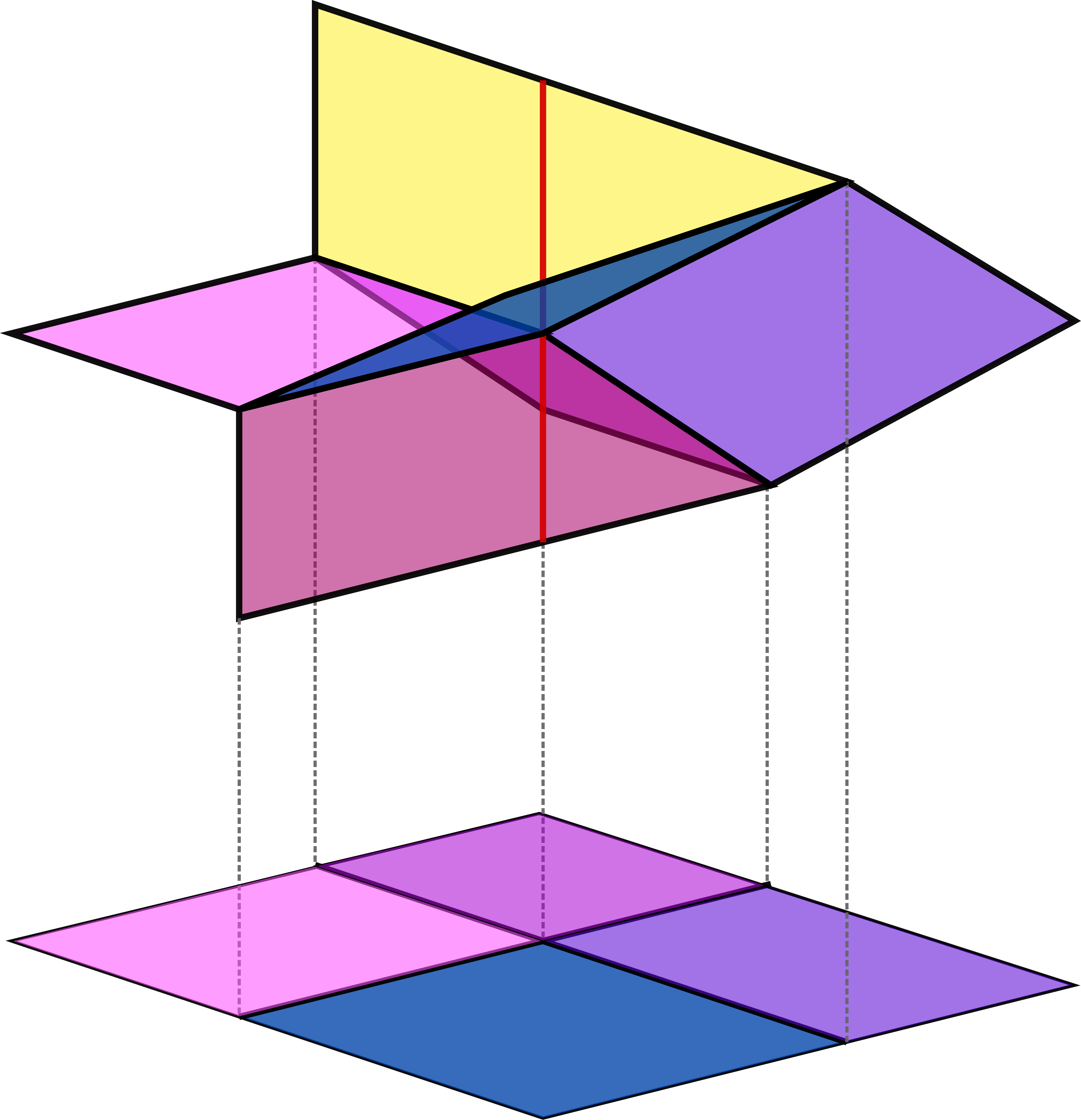}
\end{center}
\caption{The tropical birational modification of $\R^2$ along the divisor $\div_{\tg x+0\td}(\R^2)-\div_{\tg y+0\td}(\R^2)$.  
  The line $(0,0)\times\R$ is contained in the tropical surface.}
\label{fig:BlowUp}
\end{figure}

Given a tropical variety $X$ in $\R^n$ and a divisor $D$ on $X$, 
it is not true in general that there exists a tropical rational
function $f:X\to\R$ such that $\div_X (f)=D$. 
Nevertheless the following proposition
shows that this is true when $X$ is a tropical linear space. This is an immediate generalisation of~\cite[Lemma 2.23]{Sha13-2} 
which treats the case of fan tropical linear spaces. 
The proof from  \cite[Lemma 2.23]{Sha13-2} is based on the following
two facts: 
\begin{itemize}
  \item any fan tropical linear space of dimension $m$ in $\R^n$
   is obtained from $\R^m$ by 
  a sequence of tropical modifications along fan tropical linear
  spaces of dimension $m-1$;
  \item any tropical divisor in $\R^m$ is the divisor of a tropical
    rational function.    
\end{itemize}
Since the first point extends to tropical linear spaces which are
not necessarily fans, the proof of \cite[Lemma 2.23]{Sha13-2} 
extends
immediately as well. 
 
\begin{prop}\label{Thm:function}
  Let $L$ be a tropical linear space in $\RR^\damb$.
  Then any tropical divisor $D$ in $L$ is the divisor of some tropical
  rational function $f:L\to\R$.
\end{prop}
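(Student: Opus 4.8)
The plan is to run the argument of \cite[Lemma 2.23]{Sha13-2} essentially verbatim, simply deleting the adjective ``fan'' everywhere, and to organise it as an induction on the number of tropical modifications needed to build $L$. By the Example recalled above, any tropical linear space $L$ of dimension $m$ in $\R^n$ sits at the end of a chain
\[
\R^m=L_0,\ L_1,\ \dots,\ L_{n-m}=L,
\]
in which each $L_{i+1}$ is a tropical modification of $L_i$ along a tropical linear space $E_i\subset L_i$ of dimension $m-1$. I would take the length $n-m$ of this chain as the induction parameter.

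The base case $L=\R^m$ is the classical fact that every tropical divisor in $\R^m$ is the divisor of a tropical rational function: an effective divisor is the corner locus of a suitable convex integer piecewise-linear function, and an arbitrary divisor is a difference of two effective ones (see \cite{AlRa1}). For the inductive step, let $\pi\colon L\to L'$ be the last modification of the chain, where $L'$ has dimension $m$ in $\R^{n-1}$, the map $\pi$ is the linear projection forgetting the last coordinate, and $\div_{L'}(g)=E$ for the tropical rational function $g$ defining the modification. Given a tropical divisor $D$ on $L$, I would first push it down: $\pi_*D$ is a divisor on $L'$, so by the inductive hypothesis there is a rational function $h\colon L'\to\R$ with $\div_{L'}(h)=\pi_*D$. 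Pulling back, $\pi^*h=h\circ\pi$ is a rational function on $L$, and since $\pi$ has degree one the projection formula gives $\pi_*\!\left(\div_L(\pi^*h)\right)=\div_{L'}(h)=\pi_*D$. Hence the correction divisor $D-\div_L(\pi^*h)$ pushes forward to zero on $L'$, so its support is contained in the exceptional locus of the modification, the vertical faces lying over $E$. Every such divisor is principal: the function $x_n-\pi^*g$ vanishes on the graph $\Gamma_g(L')$ and is negative on the downward faces over $E$, so its divisor is supported precisely on the exceptional locus, and suitable integer combinations of functions of this type realise any divisor supported there. Adding such a function to $\pi^*h$ then produces a rational function $f$ on $L$ with $\div_L(f)=D$, which closes the induction.

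The step that carries all the weight, and the only one where one must verify that erasing ``fan'' costs nothing, is the identification of the divisors pushing forward to zero with principal multiples of the exceptional divisors. Both this identification and the principality of the exceptional divisor are local statements along the modification $\pi$, hence completely insensitive to whether $L'$ is a fan or a general tropical linear space. Consequently, once the modification decomposition of the Example is available in the non-fan setting — which is exactly the input the paper supplies — Shaw's proof transfers with no further change; the degree-one projection formula used in the push–pull comparison is standard for tropical morphisms of this kind.
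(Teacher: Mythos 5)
Your overall strategy coincides with the paper's: the paper's proof consists precisely of the observation that Shaw's argument for fan tropical linear spaces uses only (i) the decomposition of the linear space as a chain of tropical modifications over $\R^m$ and (ii) the base case of divisors in $\R^m$, so that it survives deletion of the word ``fan''. Your push-forward/pull-back reduction to divisors in the kernel of $\pi_*$ is the right skeleton for that argument. The gap is in the step you yourself identify as carrying all the weight: the claim that every divisor supported on the exceptional locus is principal because it is an integer combination of divisors of functions ``of the type $x_n-\pi^*g$''. That claim is false, and not only in the non-fan setting.

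Write $\phi=x_n-\pi^*g$. Then $\div_L(\phi)=-[\Gamma_g(E)]$ (plus vertical walls over $\div_E(g|_E)$ in general), and truncations $\max(\phi,-s)$ contribute horizontal slices of the exceptional locus minus $[\Gamma_g(E)]$; every integer combination of such divisors therefore has only horizontal facets, or vertical facets lying over $\div_E(g|_E)$. The kernel of $\pi_*$ is much bigger. Take $L'=\R^2$, $g=\max(x,0)$, $E=\{x=0\}$, so that $L\subset\R^3$ is a (degenerate, matroidal) tropical plane with exceptional locus $U=\{(0,y,z)\,:\,z\le 0\}$. The cycle $Z$ consisting of the rays $\R_{\ge 0}(0,1,-1)$, $\R_{\ge 0}(0,1,-2)$, $\R_{\ge 0}(0,0,-1)$ with weights $1$, $-1$, $1$ is balanced, supported on $U$, pushes forward to zero, and has skew facets, hence is not a combination of the divisors above. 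It is nevertheless principal: the function equal to $0$ on the graph part of $L$ and to $\max(y+z,0)-\max(2y+z,0)+\max(y,0)$ on $U$ has divisor exactly $Z$ (it vanishes identically near $\Gamma_g(E)\setminus\{0\}$, so no unwanted component appears along the top). Producing such a function is an instance of the real inner lemma: one must realize an arbitrary divisor supported on $U\simeq E\times\R_{\le 0}$ by a function vanishing along $\Gamma_g(E)$, and this requires the realization statement for lower-dimensional data (divisors on $E$, resp.\ on $E\times\R$), run as a second induction --- which is where the actual content of Shaw's Lemma 2.23 sits. Under a generous reading in which ``functions of this type'' means ``all functions vanishing on the graph part'', your claim becomes circular rather than false: it is then exactly the statement to be proved. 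Either way the inductive step is not closed; and since principality is a global property (tropical divisors can be locally principal without being principal), the appeal to locality in your final paragraph cannot repair it.
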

   
   \subsection{Floor composed varieties}

A \emph{construction pattern} 
     is a set $K=\{X_1,\cdots, X_d,D_0, \cdots, D_d, f_1,\cdots, f_d\}$ 
where 
     \begin{itemize}
     \item  $X_i$ is a $\xdim$-dimensional
       connected
       tropical variety in $\R^n$;
     \item $D_{i-1}$ and $D_{i}$ are effective tropical
       divisors on $X_i$, and $f_i:X_i\to \R$ is a tropical rational
       function such that $\div_{X_i}(f_i)=D_i-D_{i-1}$;
     \item $D_i$ is non-empty for $i\in\{1,\cdots,d-1\}$; 
     \item $f_i(p)> f_{i+1}(p)$ for any $p\in D_i$. 
     \end{itemize}

    Note that the above varieties $X_i$ are not disjoint since $D_{i}\subset X_i\cap X_{i+1}$. 
   Given such a construction pattern $K$, we construct a tropical variety 
   $X_K$ of dimension $m$ in $\R^{n+1}$ as follows.   
   For any $i\in \{1,\cdots,d-1\}$, we define $\mathcal W_i$ as the polyhedral complex
 $$\mathcal W_i=\Gamma_{f_i}(D_i)^-\cap \Gamma_{f_{i+1}}(D_i)^+$$
   equipped  with weight inherited from $D_i$. We also define 
$$\mathcal W_d=\Gamma_{f_d}(D_d)^- \qquad\mbox{and}\qquad \mathcal
W_0=\Gamma_{f_1}(D_0)^+$$
equipped with with weight inherited from $D_d$ and  $D_0$ respectively. Finally we define $X_K$ as follows
$$X_K= \mathcal W_0 \cup \bigcup_{i=1}^{d} \left(\Gamma_{f_i}(X_i)\cup
\mathcal W_i\right).   $$
Note that $X_K\subset \displaystyle \bigcup_{i=1}^{d}X_i\times \R$ by construction.
\begin{defi} \label{Defi:FloorComp}
The tropical variety $X_K$ in $\R^{n+1}$ is called 
\emph{the floor composed tropical variety} with pattern $K$.
\end{defi}

\begin{exa} \label{Exa:degree} A classical use of the above
  construction is with a construction pattern $K$ where each $X_i$ is $\R^m$, each divisor $D_i$ is a hypersurface defined 
  by a tropical polynomial $P_i$ of degree  $i$ in $\R^m$, and $f_i=\tg P_i/ P_{i-1}\td$. 
 In this case $X_K$ is a  tropical hypersurface of degree $d$ in $\R^{m+1}$.
An example of such a construction pattern and the corresponding 
floor composed tropical cubic curve in $\R^2$
is depicted in Figure~\ref{fig:FloorComp}.
\end{exa}

\begin{figure}[h!]
\begin{center}
\begin{tabular}{ccc}
\includegraphics[height=4cm, angle=0]{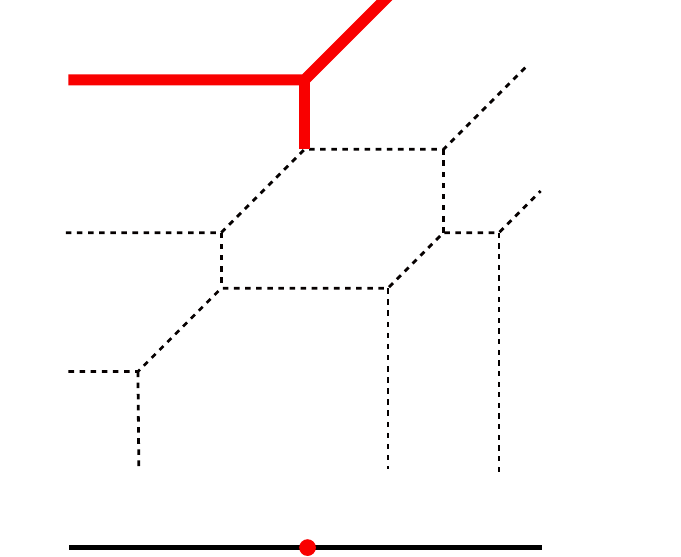}
 \put(-25,-1){$\R$}
 \put(-79,-10){\textcolor{red}{$1$}}
&\includegraphics[height=4cm, angle=0]{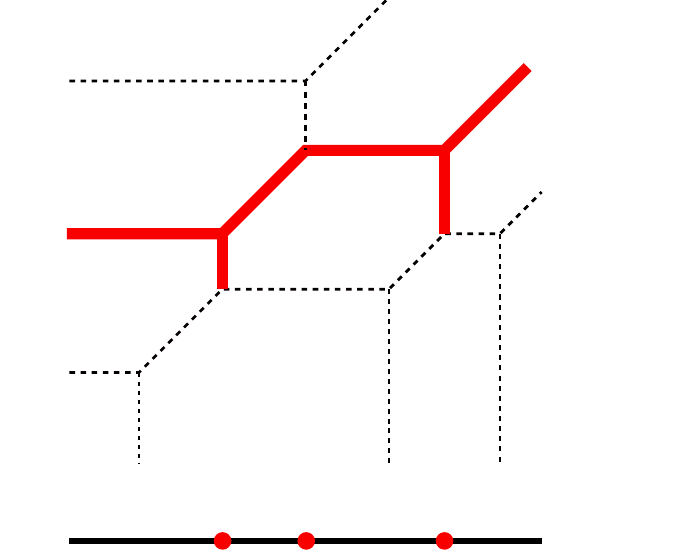}
\put(-25,-1){$\R$}
 \put(-88,-10){\textcolor{red}{$-1$}}
 \put(-96,-10){\textcolor{red}{$1$}}
 \put(-51,-10){\textcolor{red}{$1$}}
&\includegraphics[height=4cm, angle=0]{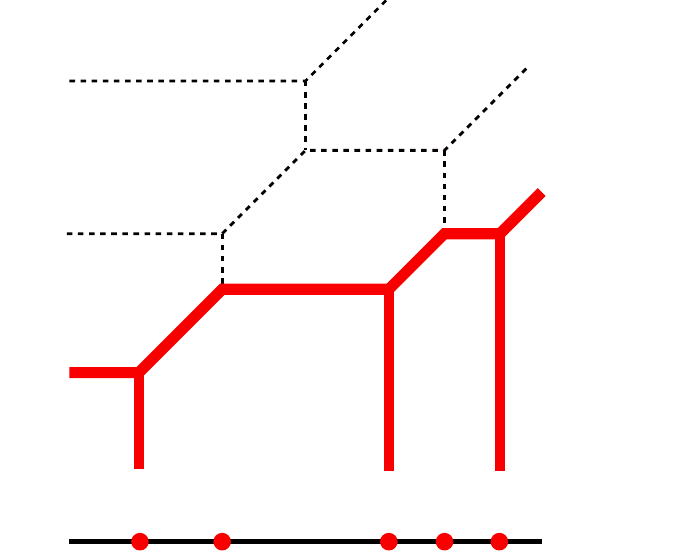}
\put(-25,-1){$\R$}
 \put(-113,-10){\textcolor{red}{$1$}}
 \put(-105,-10){\textcolor{red}{$-1$}}
 \put(-62,-10){\textcolor{red}{$1$}}
\put(-57,-10){\textcolor{red}{$-1$}}
 \put(-40,-10){\textcolor{red}{$1$}}

 \\
\end{tabular}
\end{center}
\caption{A floor composed cubic tropical curve in $\R^2$. 
  For each $X_i=\R$, 
we depicted $D_i-D_{i-1}$ and $\Gamma_{f_i}(\R)\cup\mathcal W_i$} 
\label{fig:FloorComp}
\end{figure}

Our main construction in Section \ref{sec:low any dim} uses a
generalisation of the
previous example with  an arbitrary tropical linear space
in place of $\R^n$.
Given a tropical linear space of dimension $m$ in $\R^n$ and a
surjective linear projection $\pi:L\to\R^m$ to a
coordinate $m$-plane, we denote by $U_\pi\subset \R^m$ the set of
points whose preimage by $\pi$ consists of a single point (it is the
complement in $\R^m$ of an arrangement of at most $n-m$ tropical hyperplanes). 

\begin{defi}\label{Def:function_degree}
Let $L$ be a tropical linear space of dimension $m$ in $\R^n$  and $f:L\to \R$ a tropical rational function. 
The function $f$ is said to have \emph{degree at most $d$} 
if for any surjective linear projection $\pi:L\to\R^m$ to a coordinate $m$-plane, the function $f\circ \pi^{-1}:U_\pi\to\R$ is the 
restriction to $U_\pi$  of a tropical polynomial $P_\pi$ of degree at most $d$ in $\R^m$. It is of \emph{degree  $d$} 
 if it is of degree at most $d$, and not at most $d-1$ (i.e. when at least one of these polynomials has degree $d$).
\end{defi}

Note that with the above definition, not any tropical rational
function $f:L\to\R$  has a degree. This is the case if and only if 
$f$  restricts to a tropical polynomial on every facet of $L$.

\begin{exa}\label{exa:plane}
Let $L$ be the tropical hyperplane in $\R^3$ defined by the tropical
polynomial $\tg x+y+z+0 \td$. The tropical rational function 
$$f(x,y,z)= \begin{array}{c}\tg\\ \ \end{array}\frac{(x+y)\cdot(z+0)}{x+y+z+0}
  \begin{array}{c}\td\\ \ \end{array}  $$
    has degree 1 on $L$. Indeed, by symmetry it is enough to consider
    the projection $\pi(x,y,z)=(x,y)$, and in this case 
    $f\circ \pi^{-1}(x,y)=\tg x+y\td$. Note that $\div_L(f)$ is the
    line $\R(1,1,0)$, see Figure~\ref{fig:Plane}.
\end{exa}

 \begin{figure}[h!]
\begin{center} 
\includegraphics[height=7.5cm, angle=0]{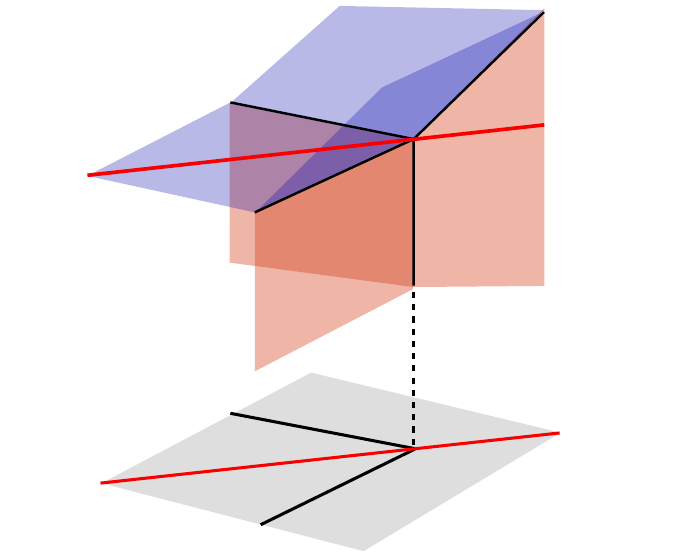}  
\end{center}
\caption{}
\label{fig:Plane}
 \end{figure}

  \begin{lemma}\label{lem:deg div pol}
Let $L$ be a tropical linear space in $\R^n$, and $X$ a tropical
subvariety in $L$ of codimension 1 and degree $d$. Then there exists a
tropical rational 
function $f:L\to \R$ of degree $d$ such that $X=\div_L(f)$.
 \end{lemma}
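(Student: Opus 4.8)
The existence of a tropical rational function $f:L\to\R$ with $\div_L(f)=X$ is exactly the content of Proposition \ref{Thm:function}; the whole point is therefore to control the degree of such an $f$. The plan is to show that \emph{any} $f$ satisfying $\div_L(f)=X$ already has degree $d$. First I would check that $f$ has a well-defined degree at all. By the remark following Definition \ref{Def:function_degree}, this amounts to checking that $f$ restricts to a tropical polynomial on each facet of $L$; since $X=\div_L(f)$ is effective, the function $f$ bends upward (i.e.\ is convex) across every codimension-one face of $L$, so on each facet it is the restriction of a genuine tropical polynomial rather than a proper rational function. Hence $f$ has a degree, equal by definition to $\max_\pi \deg P_\pi$, where $\pi$ ranges over the coordinate projections $L\to\R^m$ and $P_\pi$ is the tropical polynomial with $f\circ\pi^{-1}=P_\pi$ on $U_\pi$.

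Next I would identify $\deg P_\pi$ geometrically. Over the dense open set $U_\pi$ the projection $\pi$ restricts to a bijection onto $U_\pi$, so the hypersurface $\div_{\R^m}(P_\pi)$ and the pushforward $\pi_* X$ coincide on $U_\pi$; being closed balanced hypersurfaces agreeing on a dense set, they coincide everywhere, whence $\deg P_\pi=\deg(\pi_* X)$. It then remains to compare $\deg(\pi_* X)$ with $d=\deg X$. For this I would use the tropical Bézout theorem exactly as in the proof of Lemma \ref{lem:finite dir}: for a generic standard tropical line $\ell\subset\R^m$, the preimage $\pi^{-1}(\ell)$ is a degree-one tropical linear space of complementary dimension in $\R^n$, and the projection formula gives $\deg(\pi_* X)=\pi_* X\cdot \ell = X\cdot \pi^{-1}(\ell)$. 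When $\pi$ is chosen so that no facet of $X$ is parallel to $\ker\pi$ — which happens for at least one coordinate projection, since $X$ has only finitely many facet directions — the map $\pi$ is generically injective on $X$ and this intersection number equals $\deg X=d$; for every other coordinate projection some facets collapse and one only gets $\deg(\pi_* X)\le d$. Taking the maximum over $\pi$ then yields $\deg f=d$, as desired.

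The main obstacle is the degree bookkeeping in this last step. One must ensure both that the intersection number $X\cdot\pi^{-1}(\ell)$ genuinely computes the degree $d$ of $X$ through a coordinate (not merely a generic) projection, and that the weights of $\div_{\R^m}(P_\pi)$ over the tropical hyperplane arrangement $\R^m\setminus U_\pi$ agree with those of $\pi_* X$, so that no degree is silently lost at infinity. An alternative, and perhaps more robust, route avoids projections and instead mimics the inductive proof of Proposition \ref{Thm:function}: writing $L$ as an iterated tropical modification of $\R^m$, one pushes $X$ down one modification at a time, invokes the base case (a degree-$d$ hypersurface in $\R^m$ is the divisor of a degree-$d$ tropical polynomial), and corrects by the modification function at each step, verifying that these corrections never raise the degree. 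I expect the projection argument to be cleaner to state, but the modification induction to be easier to make fully rigorous.
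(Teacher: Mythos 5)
There is a genuine gap, and it sits exactly where the paper's proof does its real work. Your plan rests on the claim that \emph{any} $f$ with $\div_L(f)=X$ already has a well-defined degree, deduced from effectiveness of the divisor. This is false: two tropical rational functions on $L$ have the same divisor whenever they differ by the restriction of an affine-linear function of $\R^n$ (a tropical Laurent monomial), since such restrictions have trivial divisor by the balancing condition, and this operation destroys the existence of a degree. Concretely, let $L\subset\R^3$ be the plane defined by $\tg x+y+z+0\td$ and $\pi(x,y,z)=(x,y)$, so that $\pi^{-1}(x,y)=(x,y,\max(x,y,0))$ on $U_\pi$. The function $f=\tg z^{-1}\td=-z$ restricted to $L$ has $\div_L(f)=0$, which is effective, yet $f\circ\pi^{-1}=-\max(x,y,0)$ is not the restriction to $U_\pi$ of any tropical polynomial: at the points $a=(3,1)$, $b=(1,2.9)$ and their midpoint $(2,1.95)$, all in $U_\pi$, it takes the values $-3$, $-2.9$ and $-2$, violating convexity. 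Likewise, if $f_0$ has divisor $X$ and degree $d$, then $\tg f_0/z\td$ has the same divisor $X$ but in general no degree at all, so ``any $f$ with $\div_L(f)=X$ has degree $d$'' cannot hold. The flaw in your convexity argument is that effectiveness of $\div_L(f)$ only gives convexity of $f\circ\pi^{-1}$ facet by facet, i.e.\ on each connected component of $U_\pi$ separately; being the restriction of a single tropical polynomial is a convexity condition \emph{across} components, which the divisor does not control. This is precisely why the paper's proof does not argue with an arbitrary $f$: using the identity $\tg f/x_i\td\circ\pi^{-1}=\tg(f\circ\pi^{-1})/x_i\td$, it first normalises $f$, among all functions with divisor $X$, to one which has a degree but such that no $\tg f/x_i\td$ does, and only then runs the degree count, deriving a contradiction with this reducedness if some $f\circ\pi_0^{-1}$ had degree at least $d+1$.

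A second, related, gap is the assertion that $\div_{\R^m}(P_\pi)$ and $\pi_*X$, agreeing on the dense set $U_\pi$, must coincide everywhere. Their difference is a balanced $(m-1)$-cycle supported on $\R^m\setminus U_\pi$, which is itself a union of tropical hyperplanes of dimension $m-1$; such a cycle has no reason to vanish (the arrangement hyperplanes themselves are balanced). This is not a technicality: components of $\div_{\R^m}(P_\pi)$ concentrated on the arrangement are exactly how $\deg P_\pi$ can exceed $d$, and ruling them out for a suitably normalised $f$ is the content of the paper's contradiction argument. Your fallback suggestion of pushing $X$ down an iterated sequence of tropical modifications of $\R^m$ is a reasonable alternative skeleton, but the step you flag as needing verification --- that the correction functions never raise the degree --- is again the same unproved point, so as written the proposal does not close the gap.
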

  \begin{proof}
 Denote by $m$ the dimension of $L$,
and let $\pi:L\to \R^m$ be a surjective linear
projection to a coordinate $m$-plane. For any direction $x_i$
which is
not contracted by $\pi$, we have
\begin{equation}\label{eq:division}
  \begin{array}{c}\tg
  \\\  \end{array}\frac{f}{x_i} \begin{array}{c}\td
  \\\  \end{array}\circ \pi^{-1}=\begin{array}{c}\tg
  \\\  \end{array}\frac{f\circ \pi^{-1}}{x_i} \begin{array}{c}\td
    \\\  \end{array} .
  \end{equation}
Hence we may assume without loss of generality that the tropical
rational function $f$ has a well defined degree but that $\tg f/x_i\td$ 
does not for any index $i$. Suppose now that there exists a projection $\pi_0$ as above 
such that the tropical polynomial  $f\circ \pi_0^{-1}$ has degree  at least  $d+1$.
 Since $\pi_0(X)$ has degree $d$, it follows that there 
exists a direction $x_i$ which is not contracted by $\pi_0$ and such
that $\tg f\circ \pi_0^{-1} /x_i\td$ is still a tropical polynomial. 
But then it follows from $(\ref{eq:division})$ that  
 $\tg f\circ\pi^{-1} /x_i\td$
 is a tropical polynomial for any projection $\pi$
 that does not contract the direction $x_i$. Hence the tropical rational
function $\tg f/x_i\td$ has a well defined degree 
in contradiction with our assumptions. 
 \end{proof}

Next proposition generalises Example \ref{Exa:degree}. 
  \begin{prop}\label{prop:floor deg d}
Let $L$ be a tropical linear space in $\R^n$, 
let $h_0,h_1,\cdots , h_d$ be tropical rational functions on $L$ such that
$h_i$ is of degree $i$, and
let $f_i=\tg h_i/h_{i-1}\td$. 
If $f_i(p)>f_{i+1}(p)$ for any $p\in\div_L(h_i)$, then 
the tropical floor composed variety $X_K$ with pattern 
$K=\{L,\cdots,L,\div_L(h_0) ,\cdots,\div_L(h_d),f_1,$ $\cdots ,f_d \}$
is of degree $d$ in $\R^{n+1}$, and
is  contained in the tropical linear space $L\times \RR$.
\end{prop}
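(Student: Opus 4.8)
The plan is to verify the two claims separately: first that $X_K$ is contained in the tropical linear space $L\times\R$, and then that it has degree $d$. The containment is essentially built into the construction of floor composed varieties: each piece $\Gamma_{f_i}(L)$ is the graph of a function on $L$, hence lies in $L\times\R$, while each wall piece $\mathcal W_i$ is obtained by intersecting vertical translates $\Gamma_{f_i}(D_i)^-$ and $\Gamma_{f_{i+1}}(D_i)^+$ of graphs of divisors $D_i=\div_L(h_i)\subset L$, so these too are supported over $L$. Since $X_K$ is by definition the union $\mathcal W_0\cup\bigcup_{i=1}^d(\Gamma_{f_i}(L)\cup\mathcal W_i)$, the inclusion $X_K\subset L\times\R$ follows immediately. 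I would state this first, as it requires no real computation.

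For the degree, I would compute the stable intersection of $X_K$ with a generic standard fan tropical linear space of the appropriate dimension, or more conveniently argue via projection, mirroring the classical computation in Example~\ref{Exa:degree}. The key point is that $L\times\R$ is itself a tropical linear space of dimension $m+1$ in $\R^{n+1}$ (being the product of $L$ with a line), so $X_K$ is a hypersurface of degree $d$ in $L\times\R$ exactly when for a generic coordinate projection $\Pi:L\times\R\to\R^{m+1}$ onto an $(m+1)$-plane, the image $\Pi(X_K)$ is a tropical hypersurface of degree $d$ in $\R^{m+1}$. I would choose coordinates so that $\Pi$ factors through a projection $\pi:L\to\R^m$ together with the graphing coordinate, and then track what each piece contributes. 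Over the open set $U_\pi\subset\R^m$ where $\pi$ is a bijection, the function $f_i\circ\pi^{-1}=\tg P_i^\pi/P_{i-1}^\pi\td$ is a quotient of tropical polynomials of degrees $i$ and $i-1$ by Definition~\ref{Def:function_degree}, since $h_i$ has degree $i$. The floor composition of these quotient functions reproduces precisely the classical degree $d$ floor decomposition of Example~\ref{Exa:degree}: the graphs $\Gamma_{f_i}$ and the walls $\mathcal W_i$ assemble, over $U_\pi$, into the tropical hypersurface $\div(\tg\sum_{i=0}^d P_i^\pi\, t^i\td)$ of degree $d$ in $\R^{m+1}$, where $t$ is the last coordinate.

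The main obstacle, and the step I would spend the most care on, is showing that this degree count is genuinely independent of the projection $\pi$ and is not spoiled by the behavior of $X_K$ over the complement $\R^m\setminus U_\pi$, where $\pi$ fails to be injective and several facets of $L$ overlap. I would handle this by invoking Lemma~\ref{lem:deg div pol} and Definition~\ref{Def:function_degree}: the fact that each $h_i$ has a well-defined degree $i$ means that for \emph{every} coordinate projection the associated polynomial has degree exactly $i$, so the degree-$d$ count is uniform across all projections and the definition of degree via stable intersection is unambiguous. The transversality hypothesis $f_i(p)>f_{i+1}(p)$ for $p\in\div_L(h_i)$ is what guarantees that the floors stack in the correct vertical order, so that the pieces $\Gamma_{f_i}(L)$ and the walls $\mathcal W_i$ glue along the divisors $D_i$ into a genuine balanced tropical cycle rather than overlapping or leaving gaps; I would check the balancing condition along each wall $\mathcal W_i$ using that $\div_L(f_i)=D_i-D_{i-1}$, so that the incoming and outgoing weights from the adjacent graphs cancel against the vertical wall contributions. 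Once balancing and the uniform degree-$d$ projection count are established, the conclusion that $X_K$ is a degree $d$ hypersurface in $L\times\R$ follows.
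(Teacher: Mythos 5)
The containment claim is fine, and it is exactly how the paper dispatches that half (it is immediate from the construction). The degree argument, however, has a genuine gap, and it sits precisely at the step you yourself single out as the main obstacle. The proposition asserts that $X_K$ has degree $d$ \emph{as a subvariety of} $\R^{n+1}$, which by definition is a stable intersection number with a generic tropical linear space of dimension $n+1-m$. You replace this with the claim that $X_K$ has degree $d$ if and only if its image under a generic coordinate projection $\Pi\colon L\times\R\to\R^{m+1}$ is a tropical hypersurface of degree $d$ in $\R^{m+1}$. This equivalence is asserted, never proved, and it is not a formality: tropical degree is not preserved by linear projections in general --- the paper's own Proposition~\ref{prop:finite cells} records only that the degree of a projected variety is bounded by a constant $D(d,W)$ depending on the projection, not that it equals $d$. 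Making your route rigorous would require a projection-formula type argument together with control of the pushforward (weights, overlaps, contracted faces) over $\R^m\setminus U_\pi$, where $\pi^{-1}$ is not single-valued; neither Lemma~\ref{lem:deg div pol} nor Definition~\ref{Def:function_degree} can supply this, since both concern degrees of rational functions on $L$, not degrees of subvarieties of $\R^{n+1}$.

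There is also a concrete misreading of Definition~\ref{Def:function_degree}: you write that $h_i$ having degree $i$ means that \emph{every} coordinate projection yields a polynomial of degree exactly $i$. The definition only requires every projection to give degree at most $i$, with at least one projection attaining $i$; different $h_i$ may attain their degrees on different projections, so even over $U_\pi$ your projected picture need not be a degree-$d$ floor decomposition for the particular $\pi$ chosen. By contrast, the paper's proof never leaves $\R^{n+1}$ and computes a single stable intersection directly: choose a tropical linear space $\Pi\subset\R^n$ of dimension $n-m$ meeting $L$ in one point lying outside $\bigcup_{i}\div_L(h_i)$; then $\Pi\times\R$ has complementary dimension to $X_K$, meets it in exactly $d$ points of multiplicity $1$ (one on each graph $\Gamma_{f_i}(L)$), and the hypothesis that the degrees of $h_{i-1}$ and $h_i$ differ by exactly $1$ guarantees that the closures of $X_K$ and $\Pi\times\R$ in $\TP^{n+1}$ do not meet at infinity, so this count is the degree. (Your remark that the ordering hypothesis $f_i>f_{i+1}$ on $\div_L(h_i)$ makes the pieces glue into a balanced cycle is correct, but that is part of the definition of a construction pattern and is not something this proposition needs to re-verify.)
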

\begin{proof}
  The only thing we have to prove is that $X_K$ is of degree $d$. We
  denote by 
  $m$ the dimension of $L$.
  Let $\Pi$ be a tropical linear space of dimension $n-m$ in $\R^{n}$
  which intersects $L$ in a single point and away from
  $\displaystyle \bigcup_{i=1}^d\mbox{div}_L(h_i)$.
  Hence $\Pi\times \R$ is a tropical linear space in $\R^{n+1}$ which
  intersects $X_K$ in exactly $d$ points, all of them of tropical
  multiplicity 1. 
  The condition that $h_i$ and $h_{i+1}$ have degree
  differing by 1
  ensures that the closures of $X_K$ and $\Pi\times \R$ in
  $\TP^{n+1}$ do not intersect in $\TP^n\setminus \R^n$, and the
  proposition is proved. 
\end{proof}

An $\xdim-$dimensional tropical variety $X$ is called a \emph{homology
bouquet of spheres} if
$$b_0(X)=1\qquad\mbox{and}\qquad b_j(X)=0 \quad \forall j\in
\{1,\cdots,\xdim-1\}.$$
Note that any connected tropical curve is a bouquet of sphere. 
\begin{prop}\label{Thm:Betti}
  Let $K=\{X_1,\cdots, X_d,D_0, \cdots, D_d, f_1,\cdots, f_d\}$ be a
  construction pattern where the tropical varieties  $X_1,\cdots,X_d$
  are homology bouquets of spheres of dimension $m$. Suppose that
  $f_i(p)>f_j(p)$ for any $i<j$ and $p\in X_i\cap X_j$.

If $m=1$, then we have
   $$b_1(X_K)=\sum_{i=1}^db_1(X_i)+\sum_{i=1}^{d-1}\left(b_0(D_i) -1 \right).$$

 If $m\ge 2$ and if  the tropical varieties  $D_0,\cdots,D_d$ are 
  homology bouquets of spheres, then 
 the floor composed tropical variety $X_{K}$ is also a bouquet of
spheres and
$$b_{\xdim}(X_K) = \sum_{i=1}^{d}b_{\xdim}(X_i)+\sum_{i=1}^{d-1}
b_{\xdim-1}(D_i).$$
   \end{prop}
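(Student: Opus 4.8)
The plan is to compute the Betti numbers of $X_K$ by analysing its decomposition into the pieces $\Gamma_{f_i}(X_i)$ and the connecting walls $\mathcal{W}_i$, using a Mayer--Vietoris argument adapted to the ``floor'' structure. The key structural feature I would exploit is that the function $f_i > f_{i+1}$ on the overlap $D_i$ means the graphs $\Gamma_{f_i}(X_i)$ sit in decreasing ``floors,'' and consecutive floors are glued precisely along the wall $\mathcal{W}_i$, which is a vertical cylinder $D_i \times [\text{interval}]$ built over $D_i$. Thus $\mathcal{W}_i$ deformation retracts onto $D_i$, and $\Gamma_{f_i}(X_i)$ is homeomorphic to $X_i$ via the projection forgetting the last coordinate. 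This gives a CW-like filtration of $X_K$ by floors.

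\textbf{First}, I would set up the filtration $Y_j = \mathcal{W}_0 \cup \bigcup_{i=1}^{j}(\Gamma_{f_i}(X_i)\cup \mathcal{W}_i)$ and compute homology by gluing one floor at a time. At each stage one attaches $\Gamma_{f_{j}}(X_{j})\cong X_{j}$ along $\mathcal{W}_{j-1}\cong D_{j-1}$, which is the intersection with the already-built part $Y_{j-1}$; more precisely $\Gamma_{f_j}(X_j)\cap Y_{j-1}$ retracts onto $D_{j-1}$ (the divisor along which floor $j$ meets floor $j-1$). I would then run Mayer--Vietoris for the pair $(Y_{j-1}, \Gamma_{f_j}(X_j)\cup\mathcal{W}_j)$ with intersection homotopy equivalent to $D_{j-1}$:
\begin{equation*}
\cdots \to H_q(D_{j-1}) \to H_q(Y_{j-1})\oplus H_q(X_j) \to H_q(Y_j) \to H_{q-1}(D_{j-1}) \to \cdots
\end{equation*}
Since each $X_i$ is a homology bouquet of spheres (and, for $m\ge 2$, each $D_i$ is too), most of these groups vanish except in the top degrees, so the long exact sequences collapse.

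\textbf{The case $m=1$} is handled separately because then $D_i$ is a finite set of points and only $H_0$ and $H_1$ are relevant. Here I would argue directly on the level of graphs: $X_K$ is a connected graph, and each $b_0(D_i)-1$ counts the extra independent cycles created when the $b_0(D_i)$ points of $D_i$ glue floor $i$ to floor $i{+}1$ (one gluing point is needed for connectivity, each additional one creates a loop). The total first Betti number is then $\sum b_1(X_i)$ (the cycles already present in each floor) plus $\sum_{i=1}^{d-1}(b_0(D_i)-1)$ (the cycles from multiple gluings), which is exactly the claimed formula; a Euler-characteristic bookkeeping makes this precise.

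\textbf{The main obstacle} I anticipate is in the case $m\ge 2$: I must verify that $\Gamma_{f_j}(X_j)\cap Y_{j-1}$ really is homotopy equivalent to $D_{j-1}$ (not something larger), i.e. that the lower graph $\Gamma_{f_j}(X_j)^-$-type extension along the wall does not introduce spurious intersections with earlier floors, and that the hypothesis $f_i(p)>f_j(p)$ for $i<j$ on $X_i\cap X_j$ guarantees the floors are genuinely stacked so that floor $j$ meets the union of previous floors only along $\mathcal{W}_{j-1}\simeq D_{j-1}$. Granting this, the bouquet hypotheses force $H_{q}(D_{j-1})=0$ for $0<q<m-1$ and $H_q(X_j)=0$ for $0<q<m$, so the Mayer--Vietoris connecting maps in the relevant range vanish; inductively $Y_j$ is a homology bouquet of spheres and the top Betti number adds $b_m(X_j)$ (from the new floor) together with $b_{m-1}(D_{j-1})$ (the new top-dimensional cycle created by capping off the sphere-classes of $D_{j-1}$ between the two floors). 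Summing over $j$ from $1$ to $d$, and noting the boundary walls $\mathcal{W}_0, \mathcal{W}_d$ are contractible cones over $D_0, D_d$ and contribute nothing, yields the stated formula $b_m(X_K)=\sum_{i=1}^{d}b_m(X_i)+\sum_{i=1}^{d-1}b_{m-1}(D_i)$.
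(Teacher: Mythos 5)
Your proposal is correct and follows essentially the same route as the paper: an induction on the number of floors, attaching one floor at a time via Mayer--Vietoris along an intersection that deformation retracts onto $D_{j-1}$ (the paper takes the pair $F_d=\mathcal W_d\cup\Gamma_{f_d}(X_d)\cup\mathcal W_{d-1}$ and $X^o_{K'}=X_{K'}\setminus\Gamma_{f_d}(D_{d-1})^-$, with intersection $\mathcal W_{d-1}\simeq D_{d-1}$, which is your filtration step in a slightly different packaging). The only cosmetic difference is the $m=1$ case, which you settle by graph/Euler-characteristic counting while the paper reads it off from the same Mayer--Vietoris sequence via the rank-one map $H_0(D_{d-1})\to H_0(X_d)\oplus H_0(X_{K'})$; both arguments are equivalent.
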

\begin{proof}
  This is an elementary application of the Mayer-Vietoris long exact
  sequence.
We prove the proposition by induction on $d$. The case $d=1$ is clear
since in this case $X_1$ is a deformation retract of $X_K$. 
Let
$K'=\{X_1,\cdots, X_{d-1},D_0, \cdots, D_{d-1}, f_1,\cdots,f_{d-1}\}$ and let us assume that the proposition holds for $X_{K'}$.
  
Defining
$$F_d=\mathcal W_d\cup \Gamma_{f_d}(X_d)\cup\mathcal
   W_{d-1} \qquad\mbox{and}\qquad X^o_{K'}=X_{K'}\setminus
   \Gamma_{f_d}(D_{d-1})^-,$$
   we have
  $$X_K=F_d\cup X^o_{K'} \qquad\mbox{and}\qquad
  \mathcal W_{d-1}=F_d\cap X^o_{K'}. $$
  Figure~\ref{fig:MV} illustrates the above sets on an example.
   Since $D_{d-1}$ (resp. $X_d$, $X^o_{K'}$) is a deformation retract of
   $W_{d-1}$ (resp. $F_d$, $X_{K'}$),
   the Mayer-Vietoris long exact sequence applied to the decomposition $X_{K}=F_d\cup X^o_{K'}$ gives
   \begin{equation}\label{equ:MV betti}
    \cdots \longrightarrow H_{j}(D_{d-1})
   \longrightarrow H_j(X_d) \oplus H_j(X_{K'})
   \longrightarrow H_j(X_{K})\longrightarrow
   H_{j-1}(D_{d-1})\longrightarrow \cdots
   \end{equation}

Since $X_d$ and $X_{K'}$ are
   connected, and  $D_{d-1}$ is non-empty, we deduce that
   the map
   $H_{0}(D_{d-1})\rightarrow H_0(X_d)\oplus H_0(X_{K'})$
   has rank one.
   This proves the result if $m=1$.

   If $m\ge 2$, since 
    $D_{d-1}$ and  $X_d$
   are homology bouquet of spheres, as well as   $X_{K'}$ by induction hypothesis, 
   the long exact sequences $(\ref{equ:MV betti})$ gives
   $$H_0(X_K)\simeq\Z,\qquad H_1(X_K)=\cdots=H_{m-1}(X_K)  =0, $$
   and 
   $$   0 \longrightarrow H_m(X_d) \oplus H_m(X_{K'})\longrightarrow H_m(X_{K}) \longrightarrow H_{m-1}(D_{d-1})\longrightarrow 0.$$
   So the proposition follows by induction on $d$. 
   \end{proof}
   
   \begin{figure}[h!]
\begin{center}
\begin{tabular}{cccc}
\includegraphics[height=2.5cm, angle=0]{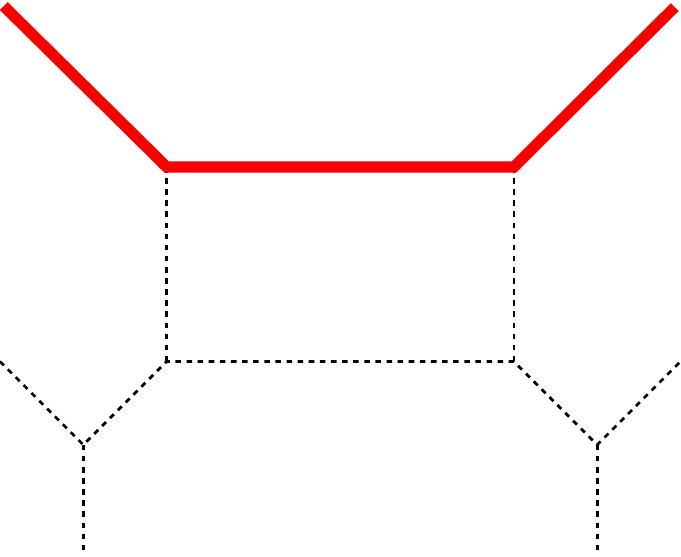}
&\includegraphics[height=2.5cm, angle=0]{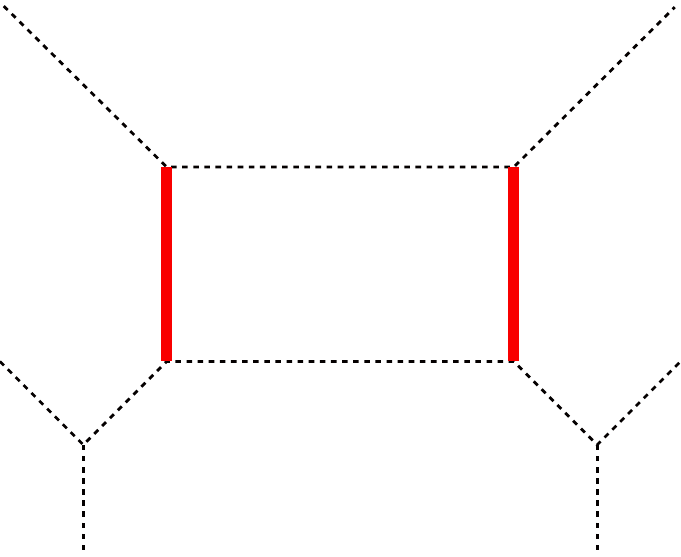}
&\includegraphics[height=2.5cm, angle=0]{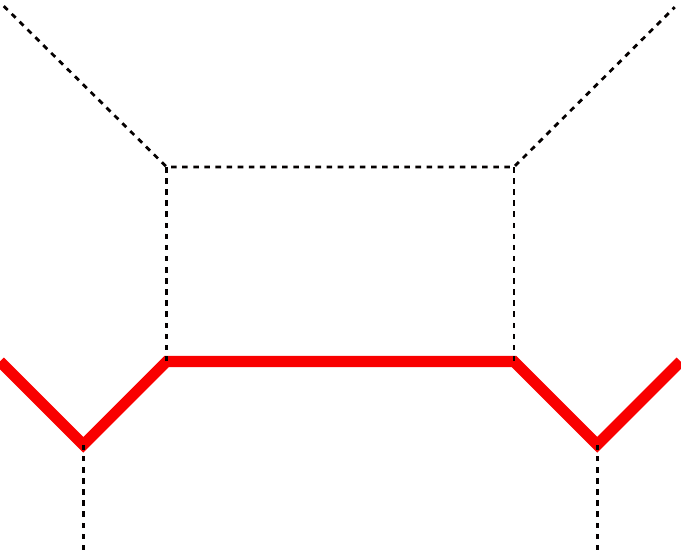}
&\includegraphics[height=2.5cm, angle=0]{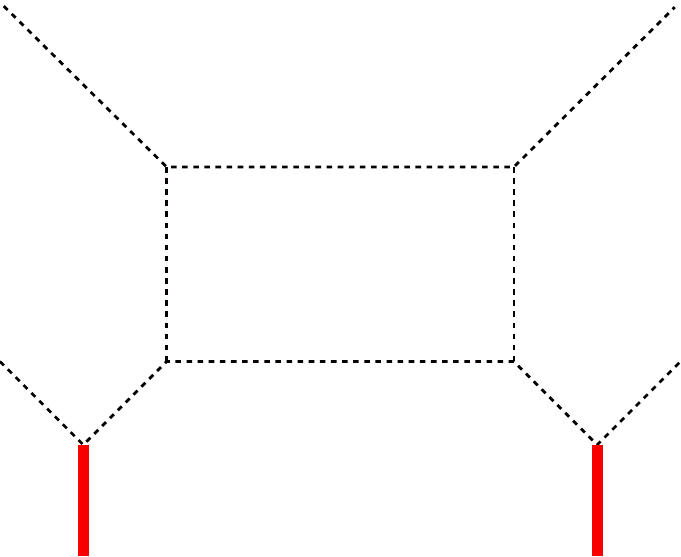}

\\
\\ a) $\Gamma_{f_1}(\R)$ & b) $\mathcal W_1$ & c) $\Gamma_{f_2}(\R)$ & d) $\mathcal W_2$
\\
\includegraphics[height=2.5cm, angle=0]{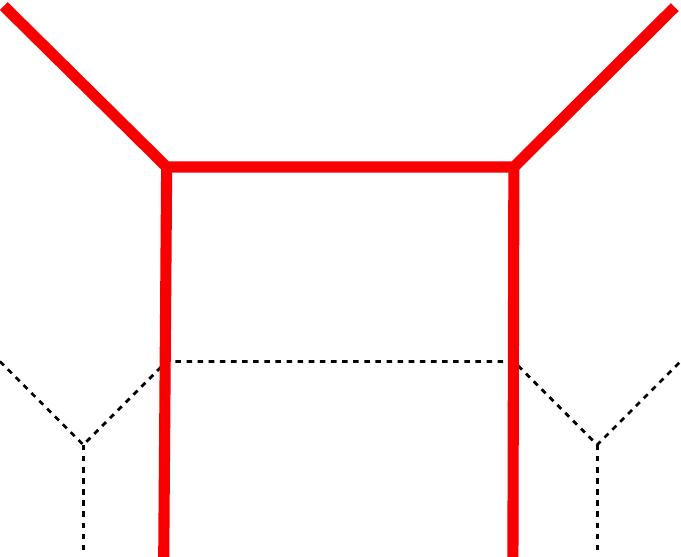}
&\includegraphics[height=2.5cm, angle=0]{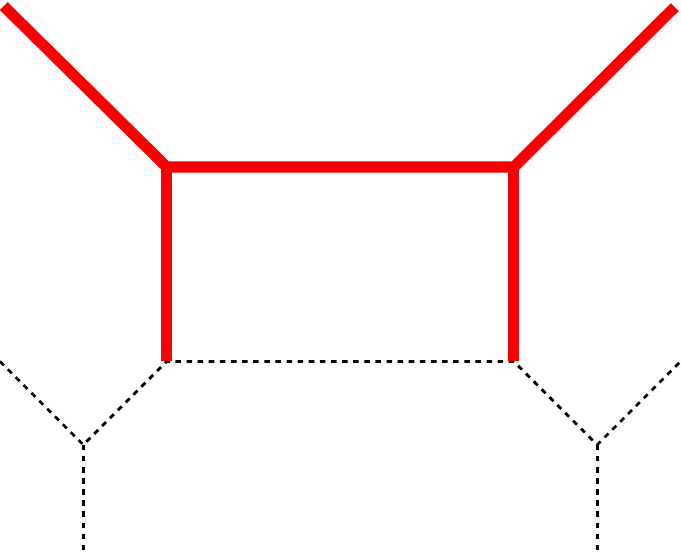}
&\includegraphics[height=2.5cm, angle=0]{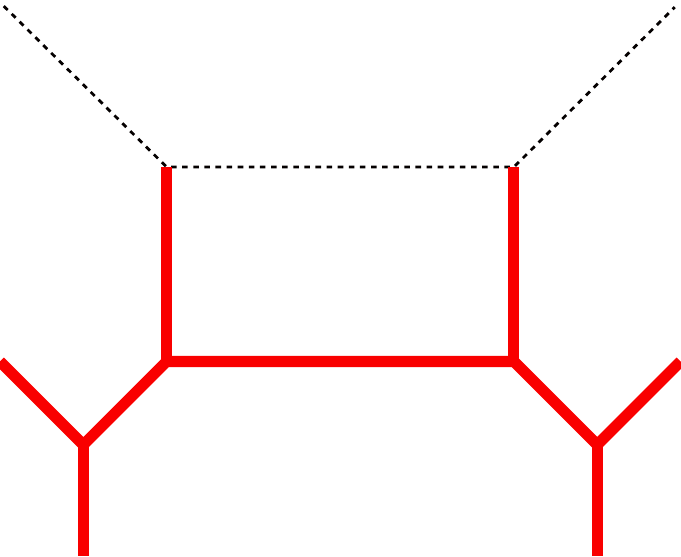}
&\includegraphics[height=2.5cm, angle=0]{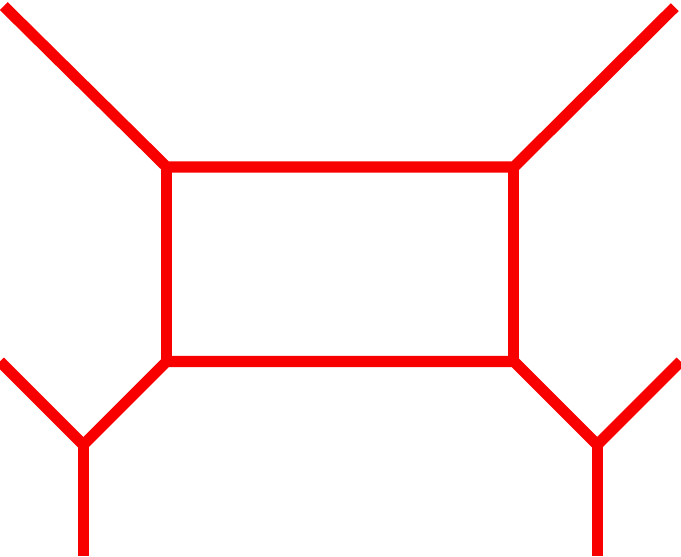}

\\
\\ e) $X_{K'}$ & f) $X_{K'}^o$ & g) $F_2$ & h) $X_K$
\end{tabular}
\end{center}
\caption{Examples of sets defined in the proof of
  Theorem~\ref{Thm:Betti} with $K=\{\R,\R, \emptyset,
  \{-1,1\},\{-2,2\}, \tg1+x+x^{-1}\td,
  \tg\frac{-2x-2x^{-1}}{-1+x+x^{-1}}\td\}$ and $K'=\{\R, \emptyset,
  \{-1,1\}, \tg1+x+x^{-1}\td\}$. 
}
\label{fig:MV}
\end{figure}

\section{Lower estimates}\label{Sec:lower}
The main goal of this section is to prove Theorem
\ref{thm:intro 2}. We first study subvarieties in $\R^n$, the case of curves in Section
\ref{sec:low curves}, from which we deduce
a construction of higher dimensional tropical varieties  by floor
composition in Section \ref{sec:low any dim}. Then we prove Theorem
\ref{thm:intro 2} in Section \ref{sec:proof intro 2}.

Recall that the \emph{recession cone} $R(X)$
of a tropical cycle $X$ in
$\R^n$ is the tropical fan defined by
$$R(X)=\lim_{t\to 0}t \cdot X. $$

\subsection{Curves in $\R^n$}\label{sec:low curves}
Theorem~\ref{thm:curve} 
is contained in
Theorem~\ref{thm:main curve} below. 
In the proof of this latter, we will need the auxiliary families of
curves constructed
in the next two lemmas. 
The conditions regarding intersections in Lemmas
\ref{lem:constr curve 1} and \ref{lem:constr curve 2} 
will be used
in Section~\ref{sec:trop hom} in the proof of Theorem~\ref{thm:main surface}.

The \emph{multiplicity} of a vertex of a tropical curve
in $\R^2$ is  twice the Euclidean area of the  polygon dual to this
vertex. Such a vertex is said to be 
\emph{non-singular} if it has multiplicity 1.
An intersection point $p$ of two tropical curves $C_1$ and $C_2$
in $\R^2$ is said to be
tropically transverse if $p$ is 
a vertex of multiplicity $2$ of $C_1\cup C_2$.
Here we denote by  $L_0$ the tropical line in $\R^2$ 
defined by the tropical polynomial $\tg x+y+0\td$.

\begin{lemma}\label{lem:constr curve 1}
There exists a family of tropical curves $(\widetilde C_d)_{d\ge 1}$
in $\R^2$ satisfying the following properties (see Figure~\ref{fig:constr curve1} for $d=2,3$):
\begin{itemize}
  \item $\widetilde C_1=L_0$;
  \item $\widetilde C_d$ is of degree $d$ and genus $\frac{(d-1)\cdot(d-2)}2$;
  \item $\widetilde C_d$ has an infinite edge $e_\infty$ of weight $d$
    in the direction $(-1,0)$, 
   which is contained in the line  $\{y=0\}$;
  \item   each vertex of $\widetilde C_d$ not adjacent to
  $e_\infty$ is non-singular;
  \item    $\widetilde C_d$ and $\widetilde C_{d-1}$ intersect in exactly $1$ unbounded segment and
    $(d-1)^2$ points, all of them being tropically transverse intersection points;
  \item   $\widetilde C_d$ and $L_0$ intersect in exactly $1$ unbounded segment and $d-1$ points, all of them being tropically transverse intersection points. 
  \item $\displaystyle\bigcap_{d\ge 1} \widetilde C_d$ contains one
    unbounded segments in the direction $(-1,0)$;
  \item $R(\widetilde C_d)=d\cdot L_0$.
\end{itemize}

\begin{figure}[h!]
\begin{center}
\begin{tabular}{ccc}
  \includegraphics[height=4.5cm, angle=0]{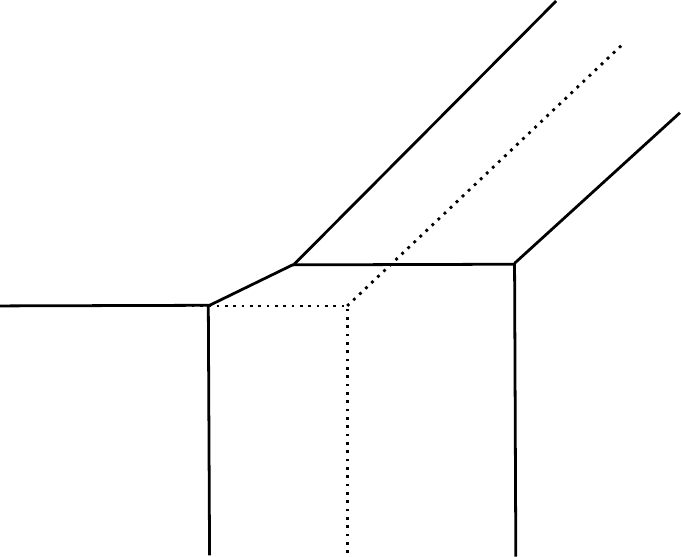}
  \put(-10,120){$L_0$}
   \put(-140,63){$2$}
 \put(-80,100){$\widetilde C_2$}
&\includegraphics[height=4.5cm, angle=0]{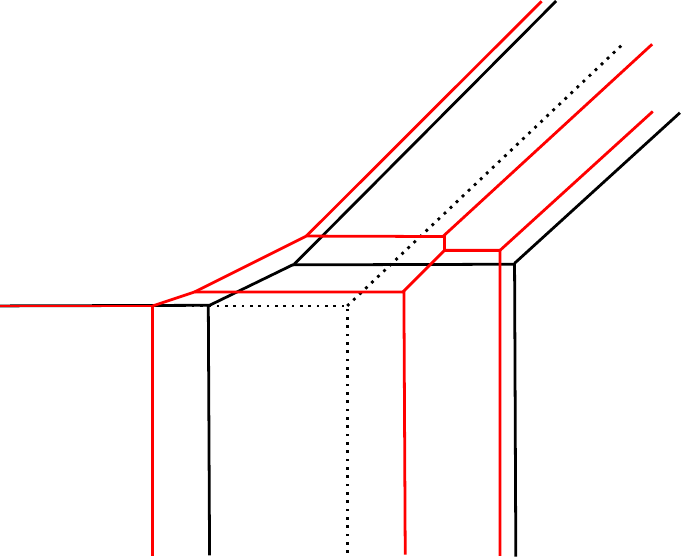}
   \put(-140,63){\textcolor{red}{$3$}}
 \put(-80,100){\textcolor{red}{$\widetilde C_3$}}
&\includegraphics[height=3cm, angle=0]{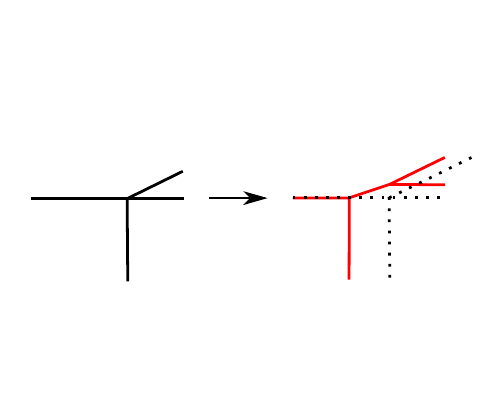}
 \put(-100,8){$\widetilde C_{d-1}\cup L_0$}
    \put(-93,48){$d$}
  \put(-43,48){\textcolor{red}{$d$}}
 \put(-35,8){\textcolor{red}{$\widetilde C_d$}}
\\
\\  a) & b) &c)
\end{tabular}
\end{center}
\caption{} 
\label{fig:constr curve1}
\end{figure}
\end{lemma}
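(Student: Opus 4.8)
The plan is to construct the family $(\widetilde C_d)_{d\ge 1}$ recursively via the floor composition of Example~\ref{Exa:degree}, taking each $X_i=\R^2$ and building $\widetilde C_d$ from $\widetilde C_{d-1}$ by adding one floor. Concretely, I would exhibit $\widetilde C_d$ directly as a non-singular tropical curve of degree $d$ whose unique non-transverse feature is the weight-$d$ unbounded edge $e_\infty$ in the direction $(-1,0)$ along $\{y=0\}$. The natural candidate is a curve dual to a triangulation of the size-$d$ triangle $\Delta_d$ with vertices $(0,0)$, $(d,0)$, $(0,d)$ in which all lattice triangles are unimodular \emph{except} for the single triangle $T_d$ with vertices $(0,0)$, $(0,1)$, $(d,0)$; this large triangle (of lattice width $d$ along the $x$-direction) is dual precisely to the weight-$d$ horizontal ray $e_\infty$, while every other vertex, being dual to a unimodular triangle, is non-singular. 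Such a triangulation is obtained by coning the bottom edge $[(0,0),(d,0)]$ to the vertex $(0,1)$ and triangulating the remaining trapezoid (the region of $\Delta_d$ above the segment $[(0,1),(d,0)]$) unimodularly; the latter region is lattice-equivalent to $\Delta_{d-1}$, which carries the triangulation dual to $\widetilde C_{d-1}$, giving the recursion.

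The genus computation is immediate from duality: by \cite[Proposition 2.10]{Mik12} the first Betti number of a tropical curve equals the number of interior lattice points of its Newton polygon that are vertices of the dual subdivision, and for a triangulation by triangles of area exactly covering $\Delta_d$ this count is $\binom{d-1}{2}=\frac{(d-1)(d-2)}{2}$ once one checks that the enlarged triangle $T_d$ swallows no interior lattice point that would otherwise contribute — indeed $T_d$ has its only interior-lattice obstruction along its hypotenuse, and since I place the subdivision so that all of $(1,\dots,1),\dots$ along the bottom row $y=0$ lie on the boundary of $T_d$, no interior point of $\Delta_d$ is lost. Thus $g(\widetilde C_d)=\frac{(d-1)(d-2)}{2}$. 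The base case $\widetilde C_1=L_0$ is the tropical line dual to $\Delta_1$, and setting $\widetilde C_1=L_0$ is consistent with $e_\infty$ having weight $1$. The recession-cone condition $R(\widetilde C_d)=d\cdot L_0$ holds because the unbounded edges of $\widetilde C_d$ are dual to the three edges of $\Delta_d$, each of lattice length $d$, which are exactly the three rays of $L_0$ taken with weight $d$.

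The heart of the lemma — and the place I expect to spend the real work — is the package of \emph{intersection} statements, since these are what Section~\ref{sec:trop hom} will consume. I would arrange the recursion so that $\widetilde C_d$ and $\widetilde C_{d-1}$ share exactly the common unbounded segment along $\{y=0\}$ (the weight-$(d-1)$ part of $e_\infty$ sits inside the weight-$d$ part), and then count the remaining intersection points via the tropical Bézout / stable-intersection count: $\deg\widetilde C_d\cdot\deg\widetilde C_{d-1}=d(d-1)$ total intersection multiplicity, of which the shared ray accounts for the expected $d(d-1)-(d-1)^2=(d-1)$ of overlap, leaving $(d-1)^2$ transverse points. The genericity needed to make every isolated intersection a tropically transverse point (multiplicity-$2$ vertex of the union) is achieved by perturbing the coefficients defining the two curves while keeping the combinatorial types fixed; here the non-singularity of every non-$e_\infty$ vertex is exactly what guarantees the perturbed intersections are transverse. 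The analogous count for $\widetilde C_d$ against $L_0$ gives $d\cdot 1-1=d-1$ transverse points plus one shared unbounded segment. The remaining bullet, $\bigcap_{d}\widetilde C_d$ containing an unbounded segment in direction $(-1,0)$, follows since every $\widetilde C_d$ contains a sufficiently long initial segment of $\{y=0\}$ in the direction $(-1,0)$ by construction. The main obstacle is thus purely combinatorial bookkeeping: producing a single \emph{coherent} (convex) triangulation realizing simultaneously all the transversality and containment conditions, together with the nesting $\widetilde C_{d-1}\subset$ (combinatorial skeleton of) the bottom floor of $\widetilde C_d$, so that the intersection multiplicities distribute exactly as claimed; I would handle this by fixing explicit lattice heights (a height function on $\Delta_d$ that is strictly convex off $T_d$ and affine on $T_d$) and verifying the counts on the dual picture, as suggested by Figure~\ref{fig:constr curve1}c.
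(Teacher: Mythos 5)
Your plan for a single curve $\widetilde C_d$ (one big triangle in the dual subdivision plus unimodular triangles elsewhere) is sound in principle, but two things break, one of them fatally. A small one first: your triangle $T_d=\mathrm{conv}\{(0,0),(0,1),(d,0)\}$ contains the whole \emph{bottom} edge of $\Delta_d$, so it is dual to a weight-$d$ unbounded edge in direction $(0,-1)$, not $(-1,0)$; to get $e_\infty$ horizontal of weight $d$ the big triangle must contain the whole left edge $[(0,0),(0,d)]$ (fixable by swapping coordinates). The fatal one concerns your recursion: the region of $\Delta_d$ above the segment $[(0,1),(d,0)]$ is the triangle with vertices $(0,1)$, $(d,0)$, $(0,d)$, whose normalized area is $d(d-1)$, while $\Delta_{d-1}$ has normalized area $(d-1)^2$. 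Lattice-affine maps preserve normalized area, so for every $d\ge 2$ this region is \emph{not} lattice-equivalent to $\Delta_{d-1}$ and cannot carry the triangulation dual to $\widetilde C_{d-1}$. This destroys the mechanism by which you intended to relate $\widetilde C_d$ to $\widetilde C_{d-1}$.

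This matters because, as you say yourself, the heart of the lemma is the package of intersection statements, and without the recursion your argument for them is circular: tropical B\'ezout gives the total $d(d-1)$, but the split ``$d-1$ along the shared segment plus $(d-1)^2$ transverse points'' is exactly what has to be proved, and it cannot be read off from the total; moreover a generic perturbation of coefficients is not freely available, since both curves are constrained to keep their weight-$d$ (resp.\ weight-$(d-1)$) rays on the same line $\{y=0\}$. The paper resolves precisely this point by running the induction on the curves rather than on the Newton polygons: $\widetilde C_d$ is obtained by perturbing the reducible degree-$d$ curve $\widetilde C_{d-1}\cup L_0$ while keeping one edge of weight $d$; dually, this refines the subdivision of $\Delta_d$ induced by the Minkowski sum $\Delta_{d-1}+\Delta_1$, not a copy of $\Delta_{d-1}$ sitting inside $\Delta_d$. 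With that construction all counts become local and automatic: each of the $(d-1)(d-2)$ non-singular vertices of $\widetilde C_{d-1}$, each of the $d-2$ transverse points of $\widetilde C_{d-1}\cap L_0$, and the smoothing near $e_\infty$ (Figure \ref{fig:constr curve1}c) contribute one transverse point of $\widetilde C_d\cap\widetilde C_{d-1}$, totalling $(d-1)^2$; and $\widetilde C_d\cap L_0$ gets one point from each transverse point of $\widetilde C_{d-1}\cap L_0$ plus one from the vertex of $L_0$, totalling $d-1$. Finally, for $\bigcap_{d\ge 1}\widetilde C_d$ your phrase ``a sufficiently long initial segment by construction'' does not ensure that the left endpoints of the rays $e_\infty$ stay uniformly bounded over all $d$; the paper imposes exactly this uniform bound as an explicit extra condition on the choice of $\widetilde C_d$.
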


\begin{proof}
The proof is by induction on $d$. For $\widetilde C_2$, we choose the
tropical conic depicted in Figure~\ref{fig:constr curve1}a. To construct the
curve $\widetilde C_d$, we perturb the union of $\widetilde C_{d-1}$
with $L_0$, keeping an edge of multiplicity $d$.
Each non-singular vertex  of $\widetilde C_{d-1}$ gives rise
to a transverse intersection point of $\widetilde C_d$ and
$\widetilde C_{d-1}$. This gives $(d-1)\cdot(d-2)$ such points. Similarly,
each tropically transverse intersection point of $\widetilde C_{d-1}$ and
$L_0$  gives rise to a non-singular vertex of $\widetilde C_d$,  a transverse intersection point of $\widetilde C_d$ and $L_0$, and a transverse intersection point of $\widetilde C_{d}$ and $\widetilde C_{d-1}$. In each case this gives  $d-2$ such intersection points. The vertex of $L_0$ gives rise to a transverse intersection point of $\widetilde C_d$ and $L_0$, hence we have $d-1$ tropically transverse intersection points of $\widetilde C_d$ and $L_0$ as stated.
The vertex adjacent to $e_\infty$ is perturbed as depicted in
Figure~\ref{fig:constr curve1}c, which adds one additional transverse
intersection point of $\widetilde C_d$ and $\widetilde C_{d-1}$.
 The curve $\widetilde C_3$ is depicted on Figure~\ref{fig:constr curve1}b.

 To ensure the last condition, we choose $\widetilde C_d$
 such that the distance between
 the vertex of $L_0$ and every
 vertex of $\widetilde C_d$ is bounded uniformly with respect to $d$. 
\end{proof}

The proof of next lemma is similar to the proof of Lemma~\ref{lem:constr curve 1} and is left to the reader.

\begin{lemma}\label{lem:constr curve 2}
There exists a family of tropical curves $(\overline C_d)_{d\ge 1}$
in $\R^2$ satisfying the following properties (see Figure~\ref{fig:constr curve2} for $d=2,3$):
\begin{itemize}
\item $\overline C_1=L_0$;
\item $\overline C_d$ is of degree $d$ and genus $\frac{(d-1)\cdot(d-2)}2$;
\item $\overline C_d$ has an infinite edge $e_\infty$
  of weight $d$ in the direction $(-1,0)$, 
    which is contained in the line  $\{y=0\}$;
  \item $\overline C_d$ has an infinite edge $e'_\infty$ 
    of weight $d$ in the direction $(1,1)$, 
   which is contained in the line  $\{x=y\}$;
\item each vertex of $\overline C_d$ not adjacent to $e_\infty$
  or $e'_\infty$ is non-singular;
 \item   $\overline  C_d$ and $\overline  C_{d-1}$ intersect in $2$ segments and
  $(d-1)\cdot(d-2)$ points, all them
  being tropically transverse intersection points;
 \item   $\overline  C_d$ and $L_0$ intersect in exactly $2$ segments and $d-2$ points, all of them being tropically transverse intersection points.
 \item $\displaystyle \bigcap_{d\ge 1} \overline C_d$ contains $2$ unbounded segments in directions $-(1,0)$ and $(1,1)$;

\item $R(\overline C_d)=d\cdot L_0$.
\end{itemize}

\begin{figure}[h!]
\begin{center}
\begin{tabular}{cc}
\includegraphics[height=6cm, angle=0]{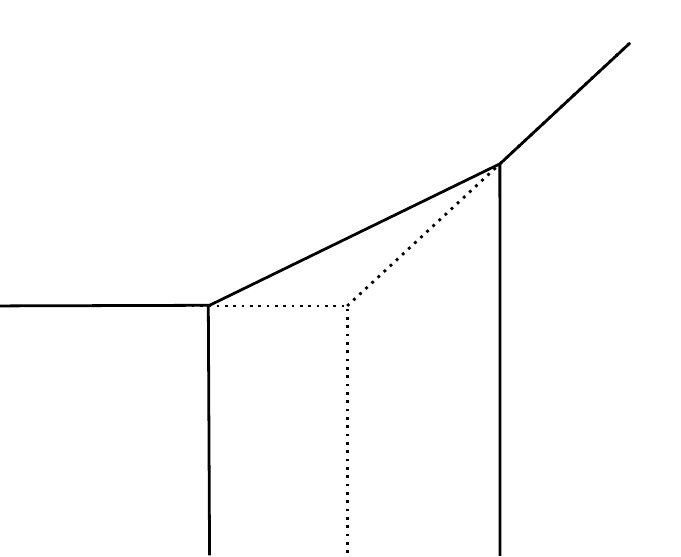}
  \put(-96,20){$L_0$}
   \put(-180,83){$2$}
    \put(-42,143){$2$}
\put(-110,108){$\overline C_2$}
&\includegraphics[height=6cm, angle=0]{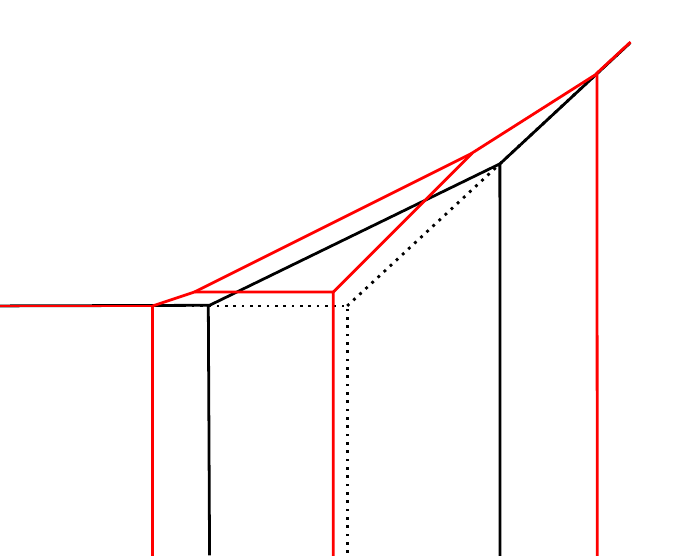}
   \put(-180,83){\textcolor{red}{$3$}}
    \put(-25,158){\textcolor{red}{$3$}}
\put(-120,113){\textcolor{red}{$\overline C_3$}}

 \\
\\  a) & b)
\end{tabular}
\end{center}
\caption{} 
\label{fig:constr curve2}
\end{figure}
\end{lemma}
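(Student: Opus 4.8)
The plan is to follow verbatim the inductive strategy of the proof of Lemma~\ref{lem:constr curve 1}, the only genuinely new feature being that \emph{two} weighted unbounded edges, rather than one, must be preserved throughout the construction. I would set $\overline C_1=L_0$ and take $\overline C_2$ to be the conic depicted in Figure~\ref{fig:constr curve2}a, and then build $\overline C_d$ by perturbing the reducible degree-$d$ curve $\overline C_{d-1}\cup L_0$. As in Lemma~\ref{lem:constr curve 1}, I would carry out this perturbation at the level of dual subdivisions: the Newton polygon of $\overline C_{d-1}\cup L_0$ is the degree-$d$ triangle, and I would choose a regular (convex) triangulation of it that uses every interior lattice point as a vertex and is unimodular away from the two sides dual to the directions $(-1,0)$ and $(1,1)$, while keeping each of these two sides as a single unsubdivided edge of lattice length $d$. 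Dually this produces a tropical curve $\overline C_d$ of degree $d$ whose only vertices of multiplicity greater than $1$ are the two adjacent to the weight-$d$ edges $e_\infty$ (direction $(-1,0)$) and $e'_\infty$ (direction $(1,1)$), which I would place on the fixed lines $\{y=0\}$ and $\{x=y\}$ respectively.

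The genus is then computed exactly as before. By the inductive hypothesis $\overline C_{d-1}$ and $L_0$ meet transversally along $2$ shared unbounded segments and in $d-3$ points, so that $\overline C_{d-1}\cap L_0$ has $d-1$ connected (contractible) components; hence, by Mayer--Vietoris,
\[
b_1(\overline C_{d-1}\cup L_0)=b_1(\overline C_{d-1})+(d-1)-1=\frac{(d-2)(d-3)}{2}+(d-2)=\frac{(d-1)(d-2)}{2},
\]
and the perturbation is chosen so as to realise this value for $\overline C_d$ (equivalently, the triangulation uses all $\frac{(d-1)(d-2)}{2}$ interior lattice points as vertices). The intersection numbers follow from the same bookkeeping as in Lemma~\ref{lem:constr curve 1}, now run simultaneously at both weighted edges: each multiplicity-one vertex of $\overline C_{d-1}$ yields a transverse point of $\overline C_d\cap\overline C_{d-1}$; each transverse point of $\overline C_{d-1}\cap L_0$ yields a new multiplicity-one vertex of $\overline C_d$ together with one transverse point of $\overline C_d\cap\overline C_{d-1}$ and one of $\overline C_d\cap L_0$; and the two corner vertices are perturbed as in Figure~\ref{fig:constr curve2}. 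Summing these contributions gives the stated $(d-1)(d-2)$ points for $\overline C_d\cap\overline C_{d-1}$ and $d-2$ points for $\overline C_d\cap L_0$, the remaining tropical intersection predicted by Bézout's theorem being absorbed in each case by the two shared unbounded segments. The identity $R(\overline C_d)=d\cdot L_0$ is immediate from the list of unbounded directions and their total weights. As in the last paragraph of Lemma~\ref{lem:constr curve 1}, I would finally arrange the perturbation so that all vertices stay within a region of diameter bounded independently of $d$; this forces the weight-$d$ edges along $\{y=0\}$ and $\{x=y\}$ to stabilise, so that $\bigcap_{d\ge 1}\overline C_d$ contains the two promised unbounded segments in directions $-(1,0)$ and $(1,1)$.

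I expect the main obstacle to lie precisely at the corner of the Newton triangle where the two preserved sides meet, i.e.\ at the vertex of $\overline C_d$ adjacent to both $e_\infty$ and $e'_\infty$. Keeping two long boundary edges unsubdivided at once, while maintaining a regular triangulation that is unimodular everywhere else and that keeps all intersections with $\overline C_{d-1}$ and with $L_0$ tropically transverse, is more delicate than in Lemma~\ref{lem:constr curve 1}, where only one weighted edge had to be controlled. Concretely, one must exhibit a convex lifting that separates $\overline C_{d-1}$ from $L_0$ and is compatible with both weighted edges simultaneously near the shared corner. Once this local model is fixed, the global genus and intersection counts follow mechanically from the scheme above, which is why the detailed verification can reasonably be left to the reader.
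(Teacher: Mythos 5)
Your proposal is correct and takes essentially the same approach as the paper, whose entire proof of Lemma~\ref{lem:constr curve 2} is the remark that it is ``similar to the proof of Lemma~\ref{lem:constr curve 1} and left to the reader'': the same induction perturbing $\overline C_{d-1}\cup L_0$ while keeping the (now two) weight-$d$ edges, the same intersection bookkeeping, and the same uniform bound on vertex positions to force the stabilisation of the two shared unbounded segments. Your dual-subdivision formulation and Mayer--Vietoris genus count are equivalent packaging of that scheme; just note that, exactly as in the paper's proof of Lemma~\ref{lem:constr curve 1}, the vertex of $L_0$ itself contributes one further transverse point of $\overline C_d\cap L_0$, which is what brings your enumerated total from $d-3$ up to the stated $d-2$.
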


\begin{thm}\label{thm:main curve}
    For any positive integer $\xcod$,  
  there exists a tropical plane $L_k$ in $\R^{\xcod+1}$ and  a
  family of tropical curves $(C_{k,d})_{d\ge 1}$ in $L_k$ such that 
  (see Figure~\ref{fig:cubicgenus2} for $k=2$ and $d=3$):
  \begin{itemize}
 \item $C_{\xcod,d}$ is tropical curve of degree $d$ and genus
   $\xcod\cdot \frac{(d-1)\cdot (d-2)}2$;
 \item the intersection  $C_{k,d}\cap C_{k,d-1}$ consists of exactly 
   $(d-1)\cdot \big[2(d-1)+ (\xcod-2)\cdot (d-2)\big]$ transverse intersection
   points and $\xcod-1$ segments;
   \item $R(C_{k,d})$ is $d$ times the fan tropical line with one
     unbounded ray in each of the directions
     $$(-1,0,0,\cdots,0), (0,-1,0,\cdots,0), \cdots, (0,\cdots,0,-1), (1,\ldots,1) .$$
    \end{itemize}
\end{thm}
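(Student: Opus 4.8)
The plan is to construct $L_k$ together with the family $(C_{k,d})_{d\ge 1}$ by induction on $k$, taking the planar curves produced in Lemmas \ref{lem:constr curve 1} and \ref{lem:constr curve 2} as the elementary building blocks and using tropical modifications (as in the Examples of Section \ref{Sec:composition}) to assemble them inside a single tropical plane. For the plane itself I would take $L_k$ to be the standard tropical plane of degree $1$ in $\R^{k+1}$, which by the Examples of Section \ref{Sec:composition} is obtained from $\R^2$ by a chain of $k-1$ tropical modifications along tropical lines; its $1$-skeleton is a tree carrying $k+2$ unbounded rays, one in each of the directions $(-1,0,\dots,0),\dots,(0,\dots,0,-1),(1,\dots,1)$, so that the recession datum demanded in the third bullet is exactly $d$ times the tropical line on these same rays. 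The induction on $d$ needed for each fixed $k$ is then inherited from the two lemmas.

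For the curve, I would exhibit $C_{k,d}$ as a union of $k$ planar pieces, one on each of $k$ distinguished two-dimensional charts of $L_k$ (the sheets created by the successive modifications), glued along common unbounded edges lying in the $1$-skeleton. On the two outer sheets I place a copy of $\widetilde C_d$ from Lemma \ref{lem:constr curve 1}, whose single weight-$d$ unbounded edge $e_\infty$ serves as a gluing edge, and on each of the $k-2$ inner sheets a copy of $\overline C_d$ from Lemma \ref{lem:constr curve 2}, whose two weight-$d$ edges $e_\infty$ and $e'_\infty$ serve as gluing edges on both sides. The cases $k=1$ (where $L_1=\R^2$ and $C_{1,d}$ is simply a generic smooth planar curve of degree $d$) and $k=2$ (the genus $(d-1)\cdot(d-2)$ spatial curve of Figure \ref{fig:cubicgenus2}) serve as the base; the step from $k$ to $k+1$ is a single modification that splices in one further $\overline C_d$-piece along two shared rays, which is what raises the number of rays of the recession cone by exactly one.

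It then remains to read off the three asserted properties from the corresponding properties of $\widetilde C_d$ and $\overline C_d$. Since the $1$-skeleton of $L_k$ is a tree and the pieces are glued along contractible unbounded segments, $b_1$ is additive and $g(C_{k,d})=k\cdot\frac{(d-1)\cdot(d-2)}2$. The intersection $C_{k,d}\cap C_{k,d-1}$ splits over the sheets: the two outer sheets contribute $(d-1)^2$ transverse points each and the $k-2$ inner sheets $(d-1)\cdot(d-2)$ each, summing to $(d-1)\cdot\bigl[2(d-1)+(k-2)(d-2)\bigr]$ points, while the shared unbounded edges, identified pairwise across the $k-1$ gluings, account for exactly $k-1$ segments. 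Finally the recession cones $d\cdot L_0$ of the pieces fit together, along the shared rays, into $d$ times the fan tropical line with $k+2$ rays.

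The main obstacle, and the heart of the argument, is the compatibility of the gluing: one must realise all $k$ planar pieces simultaneously inside one honest tropical plane $L_k$, so that along each shared ray of the $1$-skeleton the directions and weights of the pieces match, the balancing condition is preserved, and the recession cones assemble into the fan tropical line rather than producing spurious rays or weights. Concretely this amounts to checking, one modification at a time, that lifting $\widetilde C_d$ and $\overline C_d$ through the maps building $L_k$ keeps the weight-$d$ gluing edges aligned; this is exactly where the normalisations imposed in Lemmas \ref{lem:constr curve 1} and \ref{lem:constr curve 2} (the prescribed positions of $e_\infty,e'_\infty$ and the stability of $\bigcap_{d}\widetilde C_d$ and $\bigcap_{d}\overline C_d$) are used, and it is also what makes the construction sharp for $k=2$, where the resulting genus $(d-1)\cdot(d-2)$ meets the upper bound of Theorem \ref{thm:genus curve}.
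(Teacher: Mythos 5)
Your proposal follows essentially the same route as the paper: the paper's proof also glues two copies of $\widetilde C_d$ (Lemma \ref{lem:constr curve 1}) and $k-2$ copies of $\overline C_d$ (Lemma \ref{lem:constr curve 2}) along their weight-$d$ edges via an iterated \texttt{Gluing} routine, realising the result inside a tropical plane in $\R^{k+1}$ obtained from $\R^2$ by successive tropical modifications, and then reads off genus additivity, the count $2(d-1)^2+(k-2)(d-1)(d-2)$ of transverse points plus $k-1$ segments, and the recession fan exactly as you do. The only difference is presentational: the paper makes the gluing compatibility you flag explicit through concrete linear embeddings $\gamma_n(x_1,\cdots,x_n)=(x_1,\cdots,x_n,0)$ and $\gamma(x,y)=(x,\cdots,x,y)$ together with translations fixed by the marked points $p,q$ on the common unbounded edges.
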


\begin{proof}
  The case $\xcod=1$ is well known, 
   and can be proved for example by perturbing the curves constructed
   in Lemma~\ref{lem:constr curve 1}.
   For $k\ge 2$, we use the following \verb+Gluing+ 
   construction routine.
   We say that a tropical curve $C$ of degree $d$ in $\R^n$ is 
   right-degenerate (resp. left-degenerate)
   if $C$ has an unbounded
edge of weight $d$ in the direction $(1,1,\cdots,1)$ 
(resp. $(-1,0,0,\cdots,0)$) and passing through the origin.
Finally, we denote by  $H_{n-1}$  the tropical hyperplane in
$\R^{n}$  defined by the tropical polynomial
$\tg x_1+\cdots+x_n+0 \td .$

\medskip
\verb+Gluing+ routine

   {\it INPUT }
     \begin{itemize}
     \item a tropical linear plane
       $L$ in $\R^n$;
     \item a right-degenerate tropical curve $C_1$ of degree
         $d$ in $L$;
     \item a left-degenerate tropical curve $C_2$ of degree
         $d$ in $\R^2$.
     \end{itemize}
     
    {\it OUTPUT }  
          \begin{itemize}
          \item a tropical linear plane
            $\widetilde L$ in $\R^{n+1}$;
       \item a  tropical curve $C$ of degree
         $d$ in $\widetilde L$.
     \end{itemize}

         {\it DO}
        \begin{itemize}  
        \item[]
          Let $e_i$ be the edge of $C_i$ passing through the origin.
          Since the multiplicity of intersection at the origin of $H_{n-1}$ (resp $H_1$) and $C_1$ 
 (resp. $C_2$) is $d$, we deduce that $C_1\cap H_{n-1}\subset e_1$ (resp. $C_2\cap H_1\subset e_2$).
We denote by $\widehat C_1$ (resp. $\widehat C_2$) the topological closure of
$C_1\setminus H_{n-1}$ (resp. $C_2\setminus H_1$). 

We embed $\widehat C_1$ and $\widehat C_2$ in $\R^{n+1}$
in such a way that the union of the images is a tropical curve. The
embeddings are given by the two following linear maps: 
$$\gamma_n(x_1,\cdots,x_n)=(x_1,\cdots, x_n,0)\in \R^{n+1} $$
and
$$
\gamma(x,y)=(x,\cdots,x,y)\in \R^{n+1}.$$
We define  $C$ to be the union of the images of
$\widehat C_1$ and $\widehat C_2$ by
$\gamma_\xcod$ and $\gamma$ respectively, equipped with the weights
inherited from  $\widehat C_1$ and $\widehat C_2$. By
construction $C$ is a tropical curve of degree
$d$ contained in $\R^{n+1}$. 
Furthermore, the tropical curve $C$ is contained in
the tropical modification $\widetilde L$ of the tropical plane $L$
along the function  $"x_1+\dots+x_n+0"$.
        \end{itemize}
        
  {\it END}

\medskip
Note that if $C$ is the result of
\verb+Gluing+$(L,C_1,C_2)$, the genus of $C$ is clearly the sum of the  genera of $C_1$ and $C_2$.

 Let  $p=(x_p,0)\in\R^2$  (resp. $q=(x_q,x_q)\in\R^2$)
 be a point that is contained in $e_\infty$ (resp. $e'_\infty)$ of all tropical curves $\widetilde C_d $ 
 and $\overline C_d$ (resp. $\overline C_d$) 
 from Lemmas~\ref{lem:constr curve 1}~and~\ref{lem:constr curve 2}. For  $u \in \R^n$ 
we denote by $\tau_u$ the translation in $\R^n$ by the vector $u$.
Given $d\ge 1$, we define the families of  tropical linear spaces
$(L'_k)_{k\ge 1}$ and of tropical curves $(C'_{k,d} )_{k\ge 1}$ of degree $d$ recursively as
follows: 
\begin{itemize}
\item Let $\widetilde C'_d$ be the tropical curve which is the image of 
$\widetilde C_d$ under the map $r:(x,y)\mapsto (-x,y-x)$, and translated so
  that $r(p)$ is mapped to the origin; 
  set $L'_1=\R^2$ and $C'_{1,d}=\widetilde C'_d$;
  \item $(L'_{k+1},C'_{k+1,d})$ is the translation of
    \verb+Gluing+$(L'_k,C'_{k,d},\tau_{-p}(\overline C_d))$ by the vector
    $(x_p-x_q,\cdots,x_p-x_q,-x_q)$.
\end{itemize}

We define $(L_k,C_{k,d})$ as the output of  \verb+Gluing+$(L'_{k-1},C'_{k-1,d},\tau_{-p}\widetilde C_d)$.

Since the tropical 
curves $\widetilde{C_d}$ and $\overline{C_d}$ are of genus 
$\frac{(d-1)\cdot (d-2)}{2}$, the tropical curve
$C_{k,d}$ is of genus $k\cdot\frac{(d-1)\cdot(d-2)}{2}$.
Each call to \verb+Gluing+ yields one (bounded) segment in
$C_{k,d}\cap C_{k,d-1}$, which thus contains $k-1$ segments. All other
intersections are tropically transverse. 
By Lemmas~\ref{lem:constr curve 1} and \ref{lem:constr curve 2}, the number of tropically
transverse intersection points of $C_{k,d}$ and $C_{k,d-1}$ is equal to
$(k-2)\cdot (d-1)\cdot(d-2) + 2(d-1)^2$. 
By construction, the recession fan $R(C_{k,d})$ is as stated.
\end{proof}

\subsection{Higher dimensional tropical varieties in $\R^n$}\label{sec:low any dim}
We describe in this section an inductive construction of tropical
varieties in $\R^n$ with large Betti numbers, using the curves whose
existence is attested by Theorem \ref{thm:main curve} as the initial
step.
We first need the notion of recession cone of a rational tropical 
function  on a tropical linear space. 
Note that if $L$ is
 a tropical linear space in $\R^n$, then
 there is a canonical one to one correspondence $F \mapsto F^\infty$
between faces of $R(L)$
and unbounded faces of $L$.
 \begin{lemma}
  Let $L$ be a tropical linear space in $\R^n$, and 
  $f:L\to \R$ be a tropical rational function.
Let $u\in R(L)$, and denote by $S(u)$ the union of all faces
of $R(L)$ containing $u$, by $S^\infty(u)$ the union of the
corresponding unbounded faces of $L$, and by
$S^\infty_0(u)$ the set of 
points  $p$ in $S^\infty(u)$ such that
 the half-line
$p+\R_{\ge 0}u$ is contained in 
 $S^\infty(u)\setminus \div_L(f)$. Then the function
  $p\mapsto df_p(u)$ is constant on $S^\infty_0(u)$.
 \end{lemma}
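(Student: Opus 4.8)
\textbf{The plan is to} reduce the statement to a local computation near the infinite faces of $L$ and to exploit the piecewise-linearity of $f$ together with the fact that $S^\infty_0(u)$ avoids the divisor $\div_L(f)$. Recall that $f$ is a tropical rational function on $L$, hence piecewise linear: there is a polyhedral subdivision of $L$ on whose top-dimensional cells $f$ is affine, and $\div_L(f)$ is supported on the locus where $f$ fails to be linear (the codimension-one skeleton of this subdivision carrying nontrivial balancing defect). On any open region where $f$ is smooth, the differential $df_p$ is a well-defined constant linear form, and in particular $df_p(u)$ does not vary with $p$ as long as $p$ stays in a single linearity region.

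\textbf{First I would} set up the geometry of $S^\infty(u)$. By the stated correspondence $F\mapsto F^\infty$ between faces of the recession fan $R(L)$ and unbounded faces of $L$, the set $S(u)=\bigcup_{u\in F} F$ is a subfan of $R(L)$, and $S^\infty(u)$ is the corresponding union of unbounded faces of $L$; these are the faces of $L$ that recede in directions contained in $S(u)$, so every such face contains translates of the ray $\R_{\ge 0}u$. The key structural point is that the half-line $p+\R_{\ge 0}u$ for $p\in S^\infty_0(u)$ stays inside $S^\infty(u)$ and misses $\div_L(f)$; therefore $f$ is affine along each such half-line, and so the function $t\mapsto f(p+tu)$ is affine with slope exactly $df_p(u)$ (interpreting $df_p(u)$ as this directional derivative, which is well-defined precisely because the ray avoids the non-smooth locus). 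Thus $df_p(u)$ equals the asymptotic slope of $f$ in the direction $u$ measured from $p$, and the whole problem becomes: show this asymptotic slope is the same for all basepoints $p\in S^\infty_0(u)$.

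\textbf{Next I would} argue that $S^\infty_0(u)$ is connected (or at least that its relevant components share the same slope) and that moving $p$ within it changes $df_p(u)$ only across walls of the linearity subdivision — walls which, by definition of $S^\infty_0(u)$, must be crossed transversally to $u$ rather than along $u$. The cleanest route is to observe that $S^\infty(u)\setminus\div_L(f)$ is where $f$ is locally affine, and to use the balancing condition: since $\div_L(f)$ has been removed, any wall of the linearity subdivision of $f$ meeting $S^\infty_0(u)$ carries zero divisorial weight, which forces the two adjacent linear pieces of $f$ to agree to first order \emph{in the direction $u$}. Concretely, the jump in the differential of a tropical rational function across a codimension-one wall is governed by the weight of that wall in $\div_L(f)$; where this weight vanishes (i.e. off the divisor), the directional derivative $df_p(u)$ is continuous as $p$ crosses the wall, provided $u$ is parallel to the wall's recession — which holds here because $u\in R(L)$ lies in the common recession of all faces making up $S(u)$.

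\textbf{The hard part will be} making precise the claim that crossing a wall off the divisor does not change $df_p(u)$, and in particular handling the bookkeeping of which walls are genuinely ``transverse'' to the ray direction $u$ versus those the ray would have to cross. The subtlety is that $df_p$ as a full linear form certainly \emph{can} jump across a wall even off the support of $\div_L(f)$ — what must be shown is that its pairing with the specific vector $u$ does not jump, using that $u$ is a recession direction of $S(u)$ and that the wall, being off $\div_L(f)$, contributes trivially to the balancing in the $u$-direction. I expect the argument to come down to writing $S^\infty(u)$ as a union of unbounded polyhedra all invariant under translation by $\R u$ (up to the divisor), so that $f$ restricted to any such polyhedron, away from $\div_L(f)$, has a globally constant $u$-slope; the constancy across the finitely many such polyhedra then follows from connectedness of $S^\infty_0(u)$ together with continuity of the $u$-slope at the shared walls. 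Once this local-to-global continuity is established, $p\mapsto df_p(u)$ is a locally constant function on the connected set $S^\infty_0(u)$, hence constant, which is the desired conclusion.
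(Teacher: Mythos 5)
There is a genuine gap: your local-to-global step rests on the connectedness of $S^\infty_0(u)$, and this fails in general. Take $L=\R^2$ (a tropical linear space, used as such in the paper), $f=\tg y+0\td$, so that $\div_L(f)$ is the line $\{y=0\}$ with weight $1$, and $u=(1,0)\in R(L)$. Then $S(u)=S^\infty(u)=\R^2$, while $S^\infty_0(u)=\{(x,y)\ |\ y\neq 0\}$ has two connected components, separated by a facet of $\div_L(f)$ that contains the direction $u$. Your concluding argument (``$p\mapsto df_p(u)$ is a locally constant function on the connected set $S^\infty_0(u)$, hence constant'') therefore cannot be run, and the fallback you mention — ``or at least that its relevant components share the same slope'' — is exactly the statement to be proved. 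Moreover, the only wall-crossings your argument justifies are across walls of the linearity subdivision carrying zero divisorial weight; no chain of such crossings can ever pass from $\{y>0\}$ to $\{y<0\}$, since every path between these two components must meet $\div_L(f)$ itself.

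The missing idea is the one the paper's proof is built on: connect $p_1,p_2\in S^\infty_0(u)$ by a path that is allowed to cross $\div_L(f)$, but only along facets of $\div_L(f)$ containing the direction $u$ (such a path exists: push $p_1$ and $p_2$ far in the direction $u$, where the only parts of the divisor still present are those unbounded in the direction $u$). Across such a facet the quantity $df_p(u)$ does not jump: the linear functions on the adjacent cells agree on the facet because $f$ is continuous, and $u$ is tangent to the facet, so their differentials agree on $u$ — this is what the paper summarises as ``by definition of $\div_L(f)$''. Note also that inside $S^\infty(u)$ the tangency condition is automatic: if two faces of $S^\infty(u)$ meet along a wall, the recession cone of the wall is the intersection of their recession cones, hence contains $u$; so the slope $df_p(u)$ is continuous across every wall in $S^\infty(u)$, divisorial or not. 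Your observation about zero-weight walls is a correct special case of this, but by never crossing the divisor you lose precisely the crossings needed to compare the values on different components of $S^\infty_0(u)$.
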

 \begin{proof}
   Let $p_1$ and $p_2$ be two points in  $S^\infty_0(u)$.
   Hence there exists a path from $p_1$
   to $p_2$ in $S^\infty_0(u)$ which crosses $\div_L(f)$ only along its
   facets  containing the direction $u$. By definition of
   $\div_L(f)$, the value of $df_p(u)$ does not change when crossing
   such a facet.
 \end{proof}

  As a consequence, there is a well defined map 
 $$\begin{array}{cccc}
   R(f) : & R(L)&\longrightarrow& \R
   \\& u&\longmapsto & df_p(u)
 \end{array}
 $$
  where
  $p$ is any point in $S^\infty_0(u)$.
  The map $R(f)$ is called the
  \emph{recession map} of $f$.

\begin{thm}\label{Thm:HigherConst}
  For any positive integers
  $\xdim$ and $\xcod$,
  there exist a tropical linear space  $L_{m,k}$
of dimension $m+1$ in $\R^{m+k}$, a tropical linear space $L'_{m,k}$
of dimension $m$  in  $\R^{m+k}$  and  a
  family of tropical hypersurfaces $(X_{m,k,d})_{d\ge 1}$ in $L_{m,k}$
  such that for any $d\ge 1$,
   \begin{itemize}
  \item  $X_{m,k,d}$ is  of degree $d$;
  \item  $X_{m,k,d}$ is a homology bouquet of spheres and
    $$b_\xdim(X_{m,k,d}) =\xcod\cdot B_m(m,1,d) ;$$
  \item $R(X_{m,k,d})=d\cdot L'_{m,k}$.    
  \end{itemize}
\end{thm}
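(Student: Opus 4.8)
The plan is to prove Theorem~\ref{Thm:HigherConst} by induction on the dimension $m$, using the floor composition machinery developed in Section~\ref{Sec:composition} as the inductive step and Theorem~\ref{thm:main curve} as the base case $m=1$. For the base case, the curves $C_{k,d}$ produced by Theorem~\ref{thm:main curve} live in a tropical plane $L_k\subset\R^{k+1}$, are connected (hence homology bouquets of spheres in dimension $1$), have degree $d$, genus $k\cdot\frac{(d-1)(d-2)}{2}=k\cdot B_1(1,1,d)$, and recession fan $d$ times a fixed tropical line; so setting $L_{1,k}=L_k$, $X_{1,k,d}=C_{k,d}$, and $L'_{1,k}$ the relevant $1$-dimensional linear space gives exactly the three required properties.

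For the inductive step I would assume the statement in dimension $m-1$ and construct the $m$-dimensional hypersurfaces by floor composing the $(m-1)$-dimensional ones. Concretely, suppose we have the family $(X_{m-1,k,d})$ inside $L_{m-1,k}$ with $R(X_{m-1,k,d})=d\cdot L'_{m-1,k}$. The idea is to take $L$ to be the tropical linear space $L_{m-1,k}$ itself (of dimension $m$ in $\R^{m+k-1}$), and to build a construction pattern $K=\{L,\dots,L,\div_L(h_0),\dots,\div_L(h_d),f_1,\dots,f_d\}$ as in Proposition~\ref{prop:floor deg d}, where the intermediate divisors $D_i=\div_L(h_i)$ are chosen to be (translates/scalings of) the codimension-one subvarieties $X_{m-1,k,i}$ of degree $i$. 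Lemma~\ref{lem:deg div pol} guarantees that each such divisor is the divisor of a tropical rational function $h_i$ of degree $i$ on $L$; choosing the $h_i$ with suitably decreasing slopes ensures the inequality $f_i(p)>f_{i+1}(p)$ for $p\in\div_L(h_i)$ required by Proposition~\ref{prop:floor deg d}, so that the floor composed variety $X_K=X_{m,k,d}$ is a degree-$d$ hypersurface contained in the tropical linear space $L_{m,k}=L\times\R$ of dimension $m+1$ in $\R^{m+k}$.

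The Betti-number count then comes from Proposition~\ref{Thm:Betti}: since each $X_i=L$ is a tropical linear space (so $b_m(X_i)=0$ and $X_i$ is a homology bouquet of spheres), and each divisor $D_i=X_{m-1,k,i}$ is by induction a homology bouquet of spheres, the proposition tells us $X_K$ is again a homology bouquet of spheres with
\[
b_m(X_{m,k,d})=\sum_{i=1}^{d}b_m(L)+\sum_{i=1}^{d-1}b_{m-1}(D_i)
=\sum_{i=1}^{d-1}b_{m-1}(X_{m-1,k,i})
=k\sum_{i=1}^{d-1}B_{m-1}(m-1,1,i).
\]
The final arithmetic step is to check that this telescoping sum equals $k\cdot B_m(m,1,d)=k\binom{d-1}{m+1}$; this should follow from the explicit value $B_{m-1}(m-1,1,i)=\binom{i-1}{m}$ recorded in the introduction together with the hockey-stick identity $\sum_{i=1}^{d-1}\binom{i-1}{m}=\binom{d-1}{m+1}$. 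The recession-cone statement $R(X_{m,k,d})=d\cdot L'_{m,k}$ is obtained by applying the recession map of Proposition~\ref{prop:floor deg d} / the recession construction to $L\times\R$, using $R(X_{m-1,k,i})=i\cdot L'_{m-1,k}$, and setting $L'_{m,k}$ to be the cone swept out under floor composition.

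The main obstacle I expect is verifying the hypotheses of the floor composition propositions for the specific divisors $X_{m-1,k,i}$ rather than for generic degree-$i$ divisors: one must arrange the nested family $D_0\subset\cdots$ so that the rational functions $f_i=\mbox{``}h_i/h_{i-1}\mbox{''}$ satisfy $\div_L(f_i)=D_i-D_{i-1}$ with the correct effective parts \emph{and} the monotonicity $f_i>f_{i+1}$ on $D_i$, which is precisely where the carefully engineered intersection conditions in Theorem~\ref{thm:main curve} (the prescribed transverse intersections of $C_{k,d}$ with $C_{k,d-1}$ and the shared unbounded segments) enter. Controlling these intersections so that consecutive divisors meet transversally, and simultaneously tracking the recession data to identify $L'_{m,k}$ precisely, is the delicate part; the homological count itself is then a purely formal consequence of the Mayer--Vietoris argument already packaged in Proposition~\ref{Thm:Betti}.
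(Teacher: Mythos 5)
Your overall strategy is exactly the paper's: induction on $m$ with Theorem~\ref{thm:main curve} as base case, Lemma~\ref{lem:deg div pol} to realise the lower-dimensional varieties $X_{m-1,k,i}$ as divisors $\div_L(h_i)$ on $L=L_{m-1,k}$, Proposition~\ref{prop:floor deg d} for the degree and the containment in $L_{m,k}=L\times\R$, and Proposition~\ref{Thm:Betti} together with the hockey-stick identity for the Betti count. However, there is one genuine gap, precisely at the step you flag as the ``main obstacle'': the monotonicity of the functions $f_i$. Neither of the two mechanisms you propose for it works. ``Choosing the $h_i$ with suitably decreasing slopes'' is not available, since the slopes of $h_i$ are dictated by its divisor; altering them would destroy the identity $\div_L(h_i)=X_{m-1,k,i}$. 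And the transverse-intersection conditions engineered in Theorem~\ref{thm:main curve} are irrelevant to this point: in the paper they are used only in Section~\ref{sec:trop hom}, for the Hodge-number computation of Theorem~\ref{thm:main surface}, not in the proof of Theorem~\ref{Thm:HigherConst}. Note also that you only arrange $f_i>f_{i+1}$ on $\div_L(h_i)$, which suffices for Proposition~\ref{prop:floor deg d} but not for Proposition~\ref{Thm:Betti}: since all floors of the pattern are the same space $L$, that proposition requires $f_i(p)>f_j(p)$ for all $i<j$ and \emph{all} $p\in L$.

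The missing idea is that the third bullet of the induction hypothesis is exactly what closes this gap --- this is why the recession-cone condition is carried along in the statement at all. Since $R(X_{m-1,k,d})=d\cdot L'_{m-1,k}$, the recession cone of $X_{m-1,k,d}-X_{m-1,k,d-1}$ is $L'_{m-1,k}$ independently of $d$, hence the recession map of $\tg h_d/h_{d-1}\td$ is of degree $1$ and independent of $d$. Consequently any two of these quotients differ by a function bounded on $L$, and one can replace $h_d$ by $\tg a_d\cdot h_d\td$ --- tropical multiplication by a constant, which leaves every divisor unchanged --- with constants satisfying $a_{d+1}<a_d-\alpha_d$ for suitable $\alpha_d$. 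With $f_d=\tg a_d\cdot h_d/h_{d-1}\td$ one then gets $f_d(p)>f_{d+1}(p)$ for \emph{every} $p\in L$, which simultaneously validates Proposition~\ref{prop:floor deg d} and Proposition~\ref{Thm:Betti}; the same $d$-independence of the recession maps gives $R(X_{m,k,d})=d\cdot L'_{m,k}$ for a fixed fan $L'_{m,k}$, completing the induction. No transversality, and no adjustment of the divisors themselves, is needed.
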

\begin{proof}
  We fix $k$ and we proceed by induction on $m$. The case $m=1$ holds
  by Theorem \ref{thm:main curve}.

  Suppose now that  $L_{m,k}$,  $L'_{m,k}$, and the family $(X_{m,k,d})_{d\ge 0}$
  have been constructed.
  By  Lemma~\ref{lem:deg div pol}, for any $d\ge 0$,
  there exists a tropical rational
  function $h_d:L_{m,k}\to\R$ of degree $d$ such that 
  $\div_{L_{m,k}}(h_d)=X_{m,k,d}$.  
   The recession cone $R(X_{m,k,d}-X_{m,k,d-1})=L'_{m,k}$ does not
  depend on $d$, hence the recession map of $\tg h_d/h_{d-1}\td $ is of degree
  $1$ and does not depend on $d$. In particular, there exists a sequence
  $(\alpha_d)_{d\ge 0}$ of real numbers such that for any sequence
  $(a_d)_{d\ge 0}$ of real numbers satisfying
  $a_{d+1}<a_{d}-\alpha_d$, we have
$$\tg a_{d+1}\cdot h_{d+1}/h_{d} (p)\td <  \tg a_d\cdot h_d/h_{d-1}(p)\td \qquad \forall p\in L_{m,k}.$$ 
  Hence we obtain that the  set
  $$K_{{m,k,d}}=\{L_{k,m},\cdots,L_{k,m}, X_{m,k,0}, X_{m,k,1},\cdots,
  X_{m,k,d},f_1,\cdots,f_d\} $$
  is a construction pattern, where $f_d=\tg a_d\cdot h_d/h_{d-1}\td$
  with $(a_d)_{d\ge 0}$ as above.
  We denote by $X_{m+1,k,d}$ the floor composed tropical variety of
  dimension $m+1$ in
  $\R^{m+k+1}$ with pattern $K_{{m,k,d}}$.
    Recall that $B_m(\xdim,1,d)=\left(\begin{array}{c}d-1\\ m+1\end{array} \right)$. 
    By Proposition \ref{Thm:Betti},  the tropical variety $X_{m+1,k,d}$ is 
  a homology bouquet of spheres and we have
  \begin{eqnarray*}
    b_{\xdim+1}(X_{\xdim+1,k,d})& = &\sum_{i=1}^{d-1} b_{m}(X_{m,k,i})\\
                        & = &\sum_{i=1}^{d-1} \xcod \cdot \left(\begin{array}{c}i-1\\ m+1\end{array} \right)\\
                        & = & \xcod\cdot \left(\begin{array}{c}d-1\\ m+2\end{array} \right)\\
                        & = & \xcod\cdot B_{\xdim+1}(\xdim +1,1,d). 
  \end{eqnarray*}
  Furthermore, by
  Proposition \ref{prop:floor deg d}, $X_{m+1,k,d}$  has degree $d$ and is contained
in the tropical linear space $L_{m+1,k}=L_{m,k}\times \R$.
Since the recession map of $f_d$ is of degree one and does not depend
of $d$, the recession 
fan $R(X_{m+1,k,d})$ is $d$ times a fan tropical linear space
$L'_{m+1,k}$ in $\R^{m+1}$ which does not depend on $d$. Hence the
tropical linear spaces $L_{m+1,k}$ and  $L'_{m+1,k}$, and the family
$(X_{m+1,k,d})_{d\ge 0}$ have been constructed, and the Theorem is proved.
\end{proof}

\subsection{Proof of Theorem \ref{thm:intro 2}}\label{sec:proof intro 2}
 Let $d,m$ and $k$ be three positive integers. We choose $L$ (resp.
 $X$) to be
 the closure in $\TP^n$ of the tropical linear space $L_{m,k}$ (resp. the
 tropical variety $X_{m,k,d}$) from Theorem
 \ref{Thm:HigherConst}.
Since $X\setminus X_{m,k,d}$ is a polyhedral complex of dimension at
most $m-1$, 
we  have
 $$b_m(X)\ge b_m(X_{m,k,d}), $$
 and the Theorem is proved. 
  In the case $m=1$, we furthermore have
 $b_1(X)=b_1(X_{1,k,d})$ since the recession fan $R(X)$ is $d$
 times the fan tropical line with unbounded edges in standard
 directions.
\hfill\BasicTree[1.3]{orange!97!black}{green!90!white}{green!50!white}{leaf}

Theorem~\ref{thm:intro 2} together with Proposition~\ref{prop:finite cells} prove Theorem~\ref{thm:intro} from the introduction.

 \section{Tropical homology of floor composed surfaces}\label{sec:trop hom}
In this section we explicitly compute tropical homology of the
floor composed surfaces constructed in the proof of Theorem~\ref{thm:intro 2}.
We refer to \cite{MikZha14,BIMS15,KSW16} for the definition of  tropical
homology for locally finite polyhedral complexes in the standard
projective space $\TP^n$. 
All tropical homology groups are considered with coefficients in $\R$.
This section partially generalises  results from \cite{Sha13}.

We first start by computing tropical homology of simple tropical
bundles, and apply these results to floor composed surfaces.
Recall that the Mayer-Vietoris Theorem holds for tropical
homology \cite[Proposition 4.2]{Sha13}, and that
an irreducible  compact trivalent\footnote{An irreducible tropical curve $C$ in $\TP^n$
  is said to be trivalent
  if $\val_p(C)\le 3$ for every point $p\in C$.} tropical curve of genus $g$ has the following
tropical Hodge diamond:
$$\begin{array}{ccc}
   & 1 & 
\\   g & &  g
\\   &1 & 
\end{array} $$

\subsection{Tropical homology of tropical ruled varieties}\label{sec:bundle}

We denote by $\Delta_n$ the standard  unimodular simplex in $\R^n$, and by 
$\widetilde \Delta_{n}^i$  the convex  polytope in $\R^{n}$ which is the
convex hull of the union of $i\cdot \Delta_{n-1}\times\{0\}$ and
$\Delta_{n-1}\times\{1\}$. The corresponding algebraic toric variety
is
$$\mathbb P\left(\mathcal O_{\CP^{n-1}}(i)\oplus \mathcal
O_{\CP^{n-1}} \right)=
\mathbb P\left(\mathcal O_{\CP^{n-1}}(-i)\oplus \mathcal
O_{\CP^{n-1}} \right).$$
We denote by $\mathbb T \widetilde \Delta_{n}^i$ the corresponding 
tropical toric variety. The faces $i\cdot \Delta_{n-1}\times\{0\}$ and
$\Delta_{n-1}\times\{1\}$ of $\widetilde \Delta_{n}^i$ correspond to
two divisors of $\mathbb T \widetilde \Delta_{n}^i$, respectively
 denoted by $E_-$ and $E_+$, that are contained in the
 boundary of $\mathbb T \widetilde \Delta_{n}^i$. Note that both 
 $E_-$ and $E_+$ are equal to  $\TP^{n-1}$. Furthermore, 
  there are two  natural projections
 $\pi_\pm:\mathbb T \widetilde \Delta_{n}^i\to E_\pm$, 
 which are tropical morphisms, and whose fibre over any point is  $\TP^1$.
 
\begin{exa}
  The standard tropical Hirzebruch surface $\mathbb T\mathbb F_i$ of degree $i$ is
  defined as $\mathbb T \widetilde \Delta_{2}^i$.
  Note that the divisor $E_+$ is tropically linearly equivalent
  (see for example \cite[Section 4.3]{Mik06} or \cite[Section 6.3]{MikRau19})
  to the divisor
$E_-+iF$, where $F$ is any fibre of $\pi_\pm$.
\end{exa}

\begin{defi}
  Let $X$ be a tropical variety in  $\TP^n$ identified with 
  $E_-\subset  \mathbb T\widetilde \Delta_{n}^i$.
\begin{itemize}
\item  The cylinder $\Sigma=\pi_{-}^{-1}(X)$  over $X$ in
$\mathbb T \widetilde \Delta_{n}^i$ is called a \emph{$\TP^1$-bundle
    over $X$}.
 The intersection of  $\Sigma$ with $E_{\pm}$  is denoted by $X_{\pm}$.

\item The tropical varieties $\Sigma_{-}=\Sigma\setminus X_{+}$ and
 $\Sigma_{+}=\Sigma\setminus X_{-}$ are called \emph{tropical line bundles
  over $X$}.
  
\item The tropical variety
 $\Sigma^{oo}=\Sigma_-\cap \Sigma_+$
  is called a \emph{$\mathbb T^\times$-bundle over $X$}.
  \end{itemize}
\end{defi}
This is a rather restrictive notion of $\TP^1$/line/$\mathbb
T^\times$ bundles, however it will be sufficient for our purposes. We
refer for example to \cite{Mik6,All09} for a more general definition of
tropical line bundles.

A $\TP^1$-bundle $\Sigma$ over a projective tropical variety $X$ comes naturally equipped with two natural
 tropical projections $\pi_\pm:\Sigma \to X_{\pm}$ with a section 
 $\iota_{\pm}:X\to X_{\pm}\subset \Sigma$. 

We compute, in the following lemmas, tropical homology groups of $\TP^1$, line and $\mathbb T^\times$ bundles. 

\begin{lemma}\label{lem:line bd}
  Let $\Sigma_\pm$ be a tropical line bundle over a tropical variety $X$.
Then for any pair $(p,q)$,   the inclusion
  $\iota_{ \pm}$ induces an isomorphism 
   $$\iota_{\pm *}:H_{p,q}(X)\simeq H_{p,q}(\Sigma_\pm).$$
\end{lemma}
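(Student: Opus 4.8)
The plan is to exploit that $\Sigma_\pm$ is a bundle over $X$ whose fibre is contractible and, more to the point, \emph{tropically acyclic}, and that the section sits along the boundary divisor where the fibre direction becomes sedentary. I will treat $\Sigma_-$; the case of $\Sigma_+$ is symmetric upon exchanging the roles of $E_-$ and $E_+$. Write $t$ for the fibre coordinate, so that each fibre of $\pi_-\colon\Sigma_-\to X_-$ is a copy of $\mathbb{T}=\TP^1\setminus\{\mathrm{pt}\}$, with $X_-$ sitting at the closed boundary point $t=-\infty$ and the open end $t\to+\infty$ produced by the removed divisor $X_+$. Since $\pi_-\circ\iota_-=\mathrm{id}$, we already have $\pi_{-*}\circ\iota_{-*}=\mathrm{id}$ on every $H_{p,q}(X)$, so the whole content is to produce a two-sided homotopy inverse.

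First I would record the local model for the multi-tangent sheaves. Over the relative interior of a fibre the available tangent directions are those of $X$ together with the extra fibre direction $\partial_t$, so there $\F_p(\Sigma_-)=\pi_-^{*}\F_p(X)\oplus\big(\pi_-^{*}\F_{p-1}(X)\big)\wedge\partial_t$; along the section $X_-$ the coordinate $t=-\infty$ is at infinity, the sedentarity jumps, the direction $\partial_t$ is killed, and $\F_p$ restricts to $\F_p(X)$. The conceptual core is the vanishing of the reduced tropical homology of the fibre: the only nonzero group is $H_{0,0}(\mathbb{T})=\R$. Indeed, any class carrying the coefficient $\partial_t$ would have to be supported near the non-compact end $t\to+\infty$ and hence cannot be represented by a finite singular cycle, while at the closed end $t=-\infty$ the coefficient $\partial_t$ disappears; this is exactly what distinguishes the ordinary tropical homology used here from a Borel--Moore version.

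Next I would globalise this fibre computation via the fibrewise deformation retraction $H\colon\Sigma_-\times[0,1]\to\Sigma_-$ that slides each point toward the $t=-\infty$ end of its fibre, with $H_0=\mathrm{id}$ and $H_1=\iota_-\circ\pi_-$. The decisive point is that $H$ is compatible with the sheaves $\F_p$: along the retraction the interior multi-tangent spaces only collapse onto $\F_p(X)$, the direction of the homotopy is everywhere $\partial_t$, and at the final time every coefficient lands in $\F_p(X_-)=\F_p(X)$. Consequently the usual prism operator $P$ is well defined on tropical singular chains—one wedges the coefficient with the homotopy direction $\partial_t$—and satisfies $\partial P+P\partial=(\iota_-\pi_-)_*-\mathrm{id}$. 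Because wedging a $\partial_t$-coefficient with $\partial_t$ vanishes and because $(\iota_-\pi_-)_*$ annihilates all $\partial_t$-components, this identity exhibits $\iota_{-*}$ as a two-sided inverse of $\pi_{-*}$ on homology, giving the desired isomorphism for every pair $(p,q)$.

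An alternative, if one prefers to avoid the explicit prism, is to run Mayer--Vietoris (available by \cite[Proposition 4.2]{Sha13}) over a cover of $X$ by trivialising opens: over each such $U$ one has $\Sigma_-|_U\cong U\times\mathbb{T}$, a Künneth-type argument combined with the vanishing of the reduced tropical homology of $\mathbb{T}$ yields $H_{p,q}(U\times\mathbb{T})\cong H_{p,q}(U)$ compatibly with the section, and these isomorphisms glue. In either route the main obstacle is the same, and it is sheaf-theoretic rather than topological: one must check carefully that sliding the fibre onto its $t=-\infty$ boundary is a morphism of tropical (stratified) spaces under which $\F_p$ behaves as claimed—in particular that the jump of sedentarity at $t=-\infty$ kills precisely the fibre coefficient $\partial_t$—so that a map which is only a topological homotopy equivalence a priori does induce isomorphisms on the position-dependent coefficients computing $H_{p,q}$.
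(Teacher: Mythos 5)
Your overall strategy is viable and in fact close in spirit to the paper's argument: the injectivity half (that $\iota_{\pm *}$ is a section of $\pi_{\pm *}$) is exactly the paper's first step, and your fibrewise retraction onto $X_-$ is a singular-chain incarnation of what the paper does cellularly, namely that every $(p,q)$-chain is homologous to a chain supported in $X_\pm$ plus a \emph{vertical} chain (cells whose image under $\pi_\pm$ drops dimension), and that no vertical chain can be closed. Your identification of the key mechanism is also correct: the sedentarity jump at $X_-$ kills precisely the fibre coefficient $\partial_t$, while the open end toward the removed divisor prevents vertical chains from closing up with finite chains.

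There is, however, a genuine gap in the central device. You define the prism operator $P$ by ``wedging the coefficient with the homotopy direction $\partial_t$''. Wedging with $\partial_t$ raises the coefficient degree, so this $P$ sends $(p,q)$-chains to $(p+1,q+1)$-chains, and the claimed identity $\partial P+P\partial=(\iota_-\pi_-)_*-\mathrm{id}$ then equates maps with values in different bidegrees: the left-hand side produces $(p+1,q)$-chains, the right-hand side $(p,q)$-chains. The identity as written is ill-typed and the subsequent reasoning (``wedging a $\partial_t$-coefficient with $\partial_t$ vanishes\dots'') is built on it. The correct prism operator must preserve $p$: the prism cells are the simplices subdividing the image of $\sigma\times[0,1]$ under the homotopy, and each carries the \emph{transported} coefficient $\beta\in\F_p$ (this transport is canonical since $\F_p$ is locally constant along the fibre direction before time $1$), with the quotient $\F_p\to\F_p(X_-)$ killing $\partial_t$ applied only to the faces landing in $X_-$ at time $1$. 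With that $P$ the chain-homotopy identity holds and your conclusion follows, no wedging needed. What you wrote instead is the Gysin-type map: wedging with the primitive fibre vector is exactly how the paper defines $\kappa:H_{p-1,q-1}(X)\to H_{p,q}(\Sigma)$ in Lemma \ref{lem:proj bd}, and it shifts bidegree by $(1,1)$ by design; you have conflated the coefficient of a swept cell with the sweeping of the cell itself. Your fallback via Mayer--Vietoris plus ``a K\"unneth-type argument'' cannot substitute for this as stated, since no K\"unneth theorem for tropical homology is available in the cited literature; it would require the same coefficient bookkeeping you are trying to bypass.
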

\begin{proof}
The morphism $\iota_{\pm *}$ is injective since it is clearly a section  
  of the morphism $H_{p,q}(\Sigma_\pm)\to H_{p,q}(X)$ induced by 
the projection $\pi_\pm$.

Equip $\Sigma_\pm$ with any locally finite
polyhedral subdivision compatible with its tropical structure. Recall that the
cellular tropical homology of $\Sigma_\pm$  is isomorphic to the singular
tropical homology of $\Sigma_\pm$, and  is thus independent of the chosen 
 subdivision \cite[Proposition 2.2]{MikZha14}.
A $(p,q)$-cell $\sigma$ in $\Sigma_\pm$ is called vertical
if $\pi_\pm(\sigma)$ has dimension strictly less than $q$.
A $(p,q)$-chain  in $\Sigma_\pm$ is called  vertical
 if  every cell in its support is vertical. Any $(p,q)$-chain in $\Sigma_\pm$ is homologous to the sum of a
$(p,q)$-chain with support in  $X_\pm$ and a   vertical $(p,q)$-chain. Since no vertical chain in
$\Sigma_\pm$ can be closed, we obtain that  any $(p,q)$-cycle in $\Sigma_\pm$ can be represented
  by a $(p,q)$-cycle in $X_\pm$. In other words, the map $\iota_{\pm *}$ is
  surjective and is thus an isomorphism. 
\end{proof}

Let $\Sigma$ be a $\TP^1$-bundle over a tropical variety $X$, and let $u_-$ 
be the  primitive integer vector generating the kernel of 
$d\pi_-$ and pointing away from $X_-$ (there is a unique 
choice of such a vector in each tropical tangent space of $\Sigma$).
To a  $(p-1,q-1)$-cell $\sigma=\beta_Q\cdot Q$ in $X$, with $Q$ a
$(q-1)$-dimensional face of $X$ and $\beta_Q \in \mathcal F_{p-1}(Q)$, 
 we associate the $(p,q)$-cell $\kappa(\sigma)=(u_-\wedge \beta_Q)\cdot \pi_-^{-1}(\iota_-(Q))$
 in $\Sigma$, where the orientation of $\pi_-^{-1}(\iota_-(Q))$ is 
 induced by the orientation on $\iota_-(Q)$. 
 This induces a linear map
$$\kappa : H_{p-1,q-1}(X)\to H_{p,q}(\Sigma),$$
 that we call a tropical Gysin map. Note that the
tropical Gysin map is the same if one defines it
 using the section $\iota_+$ instead of $\iota_-$. Furthermore it maps
 straight classes (i.e. classes induced by tropical cycles) of $X$ to straight classes of $\Sigma$.
 The inclusion map $\iota_{\pm}:X\to \Sigma$ induces a linear map
 $H_{p,q}(X)\to H_{p,q}(\Sigma)$ that we still denote by
 $\iota_{\pm *}$ to avoid additional notations.
 This slight abuse of notation is justified in particular by next lemma.
 
\begin{lemma}\label{lem:proj bd}
 For any $\TP^1$-bundle $\Sigma$ over a tropical variety $X$, and for any pair $(p,q)$, the maps $\iota_{- *}$ and $\kappa$ induce an isomorphism
 $$ (\iota_{- *},\kappa):\,H_{p,q}(X)\times H_{p-1,q-1}(X)\simeq H_{p,q}(\Sigma).$$
\end{lemma}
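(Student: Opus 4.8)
The plan is to run the Mayer--Vietoris long exact sequence for tropical homology (\cite[Proposition 4.2]{Sha13}) for the decomposition $\Sigma=\Sigma_-\cup\Sigma_+$, whose intersection is the $\mathbb{T}^\times$-bundle $\Sigma^{oo}=\Sigma_-\cap\Sigma_+$. By Lemma~\ref{lem:line bd} the inclusions $\iota_\pm$ identify $H_{p,q}(\Sigma_\pm)$ with $H_{p,q}(X)$, so the only missing ingredient is the tropical homology of the $\mathbb{T}^\times$-bundle. First I would show, exactly as in the proof of Lemma~\ref{lem:line bd}, that every $(p,q)$-chain in $\Sigma^{oo}$ is homologous to a sum of a chain pulled back from $X$ and a \emph{vertical} chain, the vertical cells now carrying the fibre direction $u_-$. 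This yields a natural splitting $H_{p,q}(\Sigma^{oo})\simeq H_{p,q}(X)\oplus H_{p-1,q}(X)$ into a horizontal and a vertical summand, the vertical summand being the image of $\gamma\mapsto u_-\wedge\gamma$.

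With these identifications I would analyse the two Mayer--Vietoris maps. The map $\alpha\colon H_{p,q}(\Sigma^{oo})\to H_{p,q}(\Sigma_-)\oplus H_{p,q}(\Sigma_+)$ sends a horizontal class $\gamma$ to $(\gamma,-\gamma)$, since $\pi_-$ and $\pi_+$ restrict on $\Sigma^{oo}$ to the same homotopy equivalence onto $X$ (both collapse the fibre), and it kills the vertical summand because $\pi_{\pm *}$ annihilates any class carrying the fibre direction $u_-$. Hence $\alpha$ is injective on the horizontal part with image the antidiagonal, and $\ker\alpha=H_{p-1,q}(X)$. Feeding this into the exact sequence
$$\cdots\to H_{p,q}(\Sigma^{oo})\xrightarrow{\alpha}H_{p,q}(X)^{\oplus2}\xrightarrow{\beta} H_{p,q}(\Sigma)\xrightarrow{\partial}H_{p,q-1}(\Sigma^{oo})\xrightarrow{\alpha'}H_{p,q-1}(X)^{\oplus2}\to\cdots,$$
the image of $\alpha$ is the antidiagonal, so its cokernel is $H_{p,q}(X)$, realised inside $H_{p,q}(\Sigma)$ by $\iota_{-*}$; and $\ker\alpha'$ is exactly the vertical summand $H_{p-1,q-1}(X)$, so $\operatorname{im}\partial=H_{p-1,q-1}(X)$. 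The sequence therefore collapses to the short exact sequence
$$0\to H_{p,q}(X)\xrightarrow{\iota_{-*}}H_{p,q}(\Sigma)\xrightarrow{\partial}H_{p-1,q-1}(X)\to0.$$

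It remains to split this sequence via $\kappa$, i.e. to check that $\partial\circ\kappa=\pm\operatorname{id}$. For a cell $\sigma=\beta_Q\cdot Q$ the Gysin class $\kappa(\sigma)=(u_-\wedge\beta_Q)\cdot\pi_-^{-1}(\iota_-(Q))$ is supported on the whole fibre cylinder over $Q$; cutting this cylinder by the cover $\Sigma_-\cup\Sigma_+$ and taking the boundary of the $\Sigma_-$-part produces, in $\Sigma^{oo}$, the mid-fibre cross-section carrying the coefficient $u_-\wedge\beta_Q$, which is precisely the vertical class representing $\sigma$ in the summand $H_{p-1,q-1}(X)$. Hence $\partial\kappa=\pm\operatorname{id}$ and $(\iota_{-*},\kappa)$ is the desired isomorphism. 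I expect the two chain-level inputs to be the main obstacle: establishing the horizontal/vertical splitting of $H_{p,q}(\Sigma^{oo})$ together with the vanishing $\pi_{\pm *}(u_-\wedge\cdot)=0$, and verifying that $\partial$ sends $\kappa(\sigma)$ to the vertical generator. Both reduce to careful bookkeeping of the fibrewise decomposition $\TP^1=\mathbb{T}\cup\mathbb{T}$ glued along $\mathbb{T}^\times$ and of the coefficient systems $\mathcal{F}_p$.
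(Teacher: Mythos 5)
Your overall strategy runs Mayer--Vietoris for $\Sigma=\Sigma_-\cup\Sigma_+$ and feeds in a computation of $H_{p,q}(\Sigma^{oo})$; the fatal step is the claimed ``natural splitting'' $H_{p,q}(\Sigma^{oo})\simeq H_{p,q}(X)\oplus H_{p-1,q}(X)$. This is false whenever the bundle is twisted, and the twisted case is exactly the one the paper needs: by Example~\ref{ex:chern surface} the first Chern class of $\Sigma_-$ over a degree-$d$ curve is $i\cdot d$ times the class of a point, hence non-zero. Concretely, for a $\TP^1$-bundle with non-vanishing Chern class over a compact trivalent curve of genus $g$, the correct values (Corollary~\ref{cor:torus bd}, Example~\ref{ex:torus bd curve}) are $h_{1,0}(\Sigma^{oo})=h_{1,1}(\Sigma^{oo})=g$, whereas your splitting predicts $g+1$ for both; in general the right answer is $\mathrm{Ker}\,\nu_{p,q}\times\bigl(H_{p-1,q}(X)/\mathrm{Im}\,\nu_{p,q+1}\bigr)$, which is strictly smaller than your prediction as soon as $\nu\ne 0$. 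The reason your ``exactly as in Lemma~\ref{lem:line bd}'' argument breaks is that the proof of Lemma~\ref{lem:line bd} pushes chains onto the copy $X_\pm$ of $X$ sitting \emph{inside} $\Sigma_\pm$, and $\Sigma^{oo}$ contains no such copy. A piecewise-linear section of $\Sigma^{oo}\to X$ does exist topologically, but the lift of a tropical cycle along it is \emph{not} a tropical cycle: on the creases of the section the framings of adjacent lifted cells differ by multiples of $u_-$, so the lifted chain has a vertical boundary, and the obstruction to correcting it inside $\Sigma^{oo}$ is precisely the twisting measured by $\nu_{p,q}$ (this is the tropical avatar of the fact that the Gysin sequence of a $\C^*$-bundle only splits when the Euler class vanishes). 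Consequently your assertions that $\mathrm{im}\,\alpha$ is the antidiagonal and that $\ker\alpha'$ is all of $H_{p-1,q-1}(X)$ fail, and the long exact sequence does not collapse as you claim.

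There is also a structural circularity: in the paper the logical order is the reverse of yours. Lemma~\ref{lem:proj bd} is proved first, by a direct chain-level argument in $\Sigma$ itself --- any $(p,q)$-cycle is homologous to a chain supported on $X_-$ plus a vertical chain; since vertical cells have boundary meeting $X_+$, closedness forces the vertical part to have framing divisible by $u_-$, i.e.\ to be $\kappa$ of a chain in $X$, and then both parts are separately closed; injectivity is proved by the same decomposition applied to a bounding chain. Only afterwards is $H_{p,q}(\Sigma^{oo})$ deduced from the lemma via Mayer--Vietoris (Corollary~\ref{cor:torus bd}), and it is there that the map $\nu_{p,q}$ unavoidably enters. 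To salvage your route you would need an independent computation of $H_{p,q}(\Sigma^{oo})$ that keeps track of $\nu_{p,q}$, which is essentially as hard as (and in the paper is a consequence of) the lemma you are trying to prove; the direct argument in $\Sigma$, where $X_-$ is available as a retraction target, is the efficient way around this.
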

\begin{proof}
 The map $\iota_{- *}$ is injective since it is a section of $\pi_{- *}$.
As $X$ and $\Sigma$ are both compact, we choose their polyhedral
subdivision induced by the tropical structure on $X$.
 As in the proof of Lemma \ref{lem:line bd}, any $(p,q)$-chain
$\sigma$ 
in $\Sigma$
is homologous to the sum of a $(p,q)$-chain $\sigma_-$ 
in $X_-$ and a vertical $(p,q)$-chain $\sigma_v$.

Suppose that $\sigma$ is a $(p,q)$-cycle in $\Sigma$. The cellular
boundary of any vertical cell of $\Sigma$ intersects $X_+$ which is disjoint from
$X_-$. Hence the vector $u_-$ divides the framing of each cell
contained in the support of $\sigma_v$, that is to say
$\sigma_v=\kappa(\sigma_0)$ with  $\sigma_0$ a $(p-1,q-1)$-chain in
$X$.
In turn, this implies that the support of $\partial \sigma_v$ is
disjoint from $X_-$, from which we deduce that
$$\partial \sigma_-=\partial \sigma_v=0. $$
This proves that the map
$\iota_{- *}\times \kappa$ is surjective.

Conversely, suppose that $\sigma'$ and $\sigma''$ are respectively 
$(p,q)$ and $(p-1,q-1)$-cycles in $X$ such that
$$\iota_{- *}(\sigma') + \kappa(\sigma'')=\partial \gamma. $$
As above, we have 
 $\iota_{- *}(\sigma')=\partial \gamma_-$ and
$\kappa(\sigma'')=\partial \gamma_v=\kappa(\partial \gamma_0)$, which 
further implies that both  $\sigma'$ and $\sigma''$ are null homologous.
Hence the map
$\iota_{- *}\times \kappa$ is injective, and the lemma is proved.
\end{proof}

The map $\kappa$ does not depend on which section $\iota_-$ or
$\iota_+$ we choose to define it, however the inclusion
$H_{p,q}(X)\to H_{p,q}(\Sigma)$ does.
Let 
$$\nu_{p,q}: H_{p,q}(X)\to H_{p-1,q-1}(X)$$
be the linear map obtained by the following compositions 
$$H_{p,q}(X)\xrightarrow[\phantom{blablabla}]{\iota_{+ *}}
H_{p,q}(\Sigma)\xrightarrow[\phantom{blablabla}]{{(\iota_{- *},\kappa)}^{-1}} H_{p,q}(X)\times
H_{p-1,q-1}(X) \xrightarrow[\phantom{blablabla}]{} H_{p-1,q-1}(X),$$
where the last map is the projection on the second factor.
Note that $\nu_{p,q}$ is the zero map if and only if
$\iota_{+ *}=\iota_{- *}$. The image of $\nu_{\dim X,\dim X}$ is called
\emph{the first Chern class} of the tropical line bundle $\Sigma_-$
(and so it is minus the first Chern class of the line bundle $\Sigma_+$).

\begin{exa}\label{ex:chern surface}
  Consider the tropical Hirzebruch surface $\mathbb T\mathbb F_i$
  of degree $i$.
Recall that the divisor $E_+$ is tropically linearly equivalent to the divisor
$E_-+iF$, where $F$ is the divisor of $\mathbb T\mathbb F_i$ corresponding to
the side $[(0,0);(0,1)]$ of $\widetilde \Delta_2^i$. Hence the
corresponding straight classes satisfy 
$$[E_+]= [E_-] +i[F] $$
in $H_{1,1}(\mathbb T\mathbb F_i)$. In particular, the first Chern class of
$\mathbb T\mathbb F_i\setminus E_+$ is $i$ times the class of a point.

More generally, let $\Sigma\subset \widetilde \Delta_n^i$ be a $\TP^1$-bundle over a
compact tropical curve of degree $d$
in $E_-$. It follows from the balancing condition that the first
 Chern class of $\Sigma_-$ is equal to $i\cdot d$ times the class of a point.
\end{exa}

Next we turn to tropical homology of $\mathbb T^\times$-bundles.
\begin{cor}\label{cor:torus bd}
 For any $\TP^1$-bundle $\Sigma$ over a tropical variety $X$, and for any pair 
 $(p,q)$, one has the isomorphism
  $$H_{p,q}(\Sigma^{oo})\simeq \mbox{Ker }\nu_{p,q} \times
  \left(H_{p-1,q}(X)/\mbox{Im }\nu_{p,q+1} \right).$$
\end{cor}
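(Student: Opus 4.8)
The plan is to run the tropical Mayer-Vietoris sequence for the covering of $\Sigma$ by its two line bundles $\Sigma_-$ and $\Sigma_+$, and to translate every term and every map through the isomorphisms of Lemmas~\ref{lem:line bd} and~\ref{lem:proj bd}. Since $X_-$ and $X_+$ are disjoint, one has $\Sigma_-\cup\Sigma_+=\Sigma$ and $\Sigma_-\cap\Sigma_+=\Sigma^{oo}$, so this is a genuine Mayer-Vietoris decomposition. Applying \cite[Proposition 4.2]{Sha13} with $p$ held fixed (the long exact sequence runs in the $q$-grading) yields
$$\cdots\to H_{p,q+1}(\Sigma)\xrightarrow{\ \partial\ } H_{p,q}(\Sigma^{oo})\xrightarrow{\ \psi\ } H_{p,q}(\Sigma_-)\oplus H_{p,q}(\Sigma_+)\xrightarrow{\ \phi\ } H_{p,q}(\Sigma)\xrightarrow{\ \partial\ }\cdots,$$
where $\phi$ is the difference of the two inclusion-induced maps.

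The heart of the argument is to compute $\phi$ explicitly. Using Lemma~\ref{lem:line bd} I identify $H_{p,q}(\Sigma_\pm)$ with $H_{p,q}(X)$ via $\iota_{\pm *}$, and using Lemma~\ref{lem:proj bd} I identify $H_{p,q}(\Sigma)$ with $H_{p,q}(X)\times H_{p-1,q-1}(X)$ via $(\iota_{-*},\kappa)$. Under these identifications the inclusion of $\Sigma_-$ becomes $x\mapsto(x,0)$. For the inclusion of $\Sigma_+$, the definition of $\nu_{p,q}$ gives $(\iota_{-*},\kappa)^{-1}(\iota_{+*}x)=(a,\nu_{p,q}(x))$ for some $a\in H_{p,q}(X)$, i.e. $\iota_{+*}(x)=\iota_{-*}(a)+\kappa(\nu_{p,q}(x))$. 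Applying $\pi_{-*}$ and using that both $\iota_-$ and $\iota_+$ are sections of $\pi_-$ (so $\pi_{-*}\iota_{+*}=\mathrm{Id}$ and $\pi_{-*}\iota_{-*}=\mathrm{Id}$), together with $\pi_{-*}\kappa=0$ (the Gysin image lies over cells collapsed by $\pi_-$), forces $a=x$. Hence the inclusion of $\Sigma_+$ becomes $x\mapsto(x,\nu_{p,q}(x))$, and
$$\phi(\alpha,\beta)=\big(\alpha-\beta,\,-\nu_{p,q}(\beta)\big).$$

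From this explicit form I read off $\mbox{Ker }\phi_{p,q}\simeq\mbox{Ker }\nu_{p,q}$ (the kernel forces $\alpha=\beta\in\mbox{Ker }\nu_{p,q}$), and, since the first coordinate of $\phi$ is already surjective, $\mbox{coker }\phi_{p,q}\simeq H_{p-1,q-1}(X)/\mbox{Im }\nu_{p,q}$. The long exact sequence then gives the short exact sequence
$$0\to\mbox{coker }\phi_{p,q+1}\to H_{p,q}(\Sigma^{oo})\to\mbox{Ker }\phi_{p,q}\to 0,$$
because $\mbox{Im }\partial\simeq\mbox{coker }\phi_{p,q+1}$ and $\mbox{Im }\psi=\mbox{Ker }\phi_{p,q}$; substituting the computed kernel and cokernel turns this into
$$0\to H_{p-1,q}(X)/\mbox{Im }\nu_{p,q+1}\to H_{p,q}(\Sigma^{oo})\to\mbox{Ker }\nu_{p,q}\to 0.$$
Since all groups are $\R$-vector spaces the sequence splits, yielding the claimed isomorphism. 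I expect the only genuine obstacle to be the identification of $\iota_{+*}$ in the coordinates of Lemma~\ref{lem:proj bd}: the vanishing $\pi_{-*}\kappa=0$ and the fact that each section projects to the identity are exactly what pin down the first coordinate and hence determine $\phi$, after which the computation is routine diagram chasing over a field.
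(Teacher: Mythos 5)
Your proof is correct and follows essentially the same route as the paper: Mayer--Vietoris for the cover $\Sigma=\Sigma_-\cup\Sigma_+$ with $\Sigma^{oo}=\Sigma_-\cap\Sigma_+$, translation of every term through the isomorphisms of Lemmas~\ref{lem:line bd} and~\ref{lem:proj bd}, identification of the image of the middle map as $H_{p,q}(X)\times \mbox{Im }\nu_{p,q}$, and splitting of the resulting short exact sequences of $\R$-vector spaces. Your explicit computation that $\iota_{+*}(x)=\iota_{-*}(x)+\kappa(\nu_{p,q}(x))$ (via $\pi_{-*}\kappa=0$) is a correct refinement that the paper does not spell out, but it is not actually needed: the image and the kernel of the middle map come out as $H_{p,q}(X)\times \mbox{Im }\nu_{p,q}$ and $\mbox{Ker }\nu_{p,q}$ whatever the first coordinate of $\iota_{+*}$ is.
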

\begin{proof}
   The Mayer-Vietoris Theorem applied to the triple $(\Sigma,\Sigma_-,\Sigma_+)$
      gives the long exact sequence
      \begin{equation}\label{eq:MV1}
        \ldots\longrightarrow H_{p,q}(\Sigma^{oo})\longrightarrow
H_{p,q}(\Sigma_-)\times H_{p,q}(\Sigma_+) \longrightarrow
H_{p,q}(\Sigma)\longrightarrow
H_{p,q-1}(\Sigma^{oo})\longrightarrow\ldots
\end{equation}
      By Lemma \ref{lem:line bd}, we have canonical isomorphisms
      $\iota_{\pm *}:H_{p,q}(X)\to H_{p,q}(\Sigma_\pm)$.
      By Lemma \ref{lem:proj bd}, we have an isomorphism 
  $\iota_{- *}\times\kappa  :H_{p,q}(X)\times H_{p-1,q-1}(X)\to
      H_{p,q}(\Sigma)$. With these identifications,  the
image of the map
$$H_{p,q}(\Sigma_-)\times H_{p,q}(\Sigma_+) \longrightarrow
H_{p,q}(\Sigma)$$
is precisely  $H_{p,q}(X)\times \mbox{Im }\nu_{p,q}$. Hence the long exact sequence
$(\ref{eq:MV1})$ splits into the short exact sequences
$$0\longrightarrow  H_{p-1,q}(X)/\mbox{Im }\nu_{p,q+1} \longrightarrow
H_{p,q}(\Sigma^{oo})\longrightarrow   \mbox{Ker }\nu_{p,q} \longrightarrow 0,$$
and the result follows.
\end{proof}

\begin{exa}
  In the extremal  cases  when $p=\dim X+1$, or $p=0$, or $q=\dim X+1$, Corollary
  \ref{cor:torus bd} gives
$$H_{\dim \Sigma,q}(\Sigma^{oo})=H_{\dim X,q}(X), \quad
  H_{p,\dim \Sigma}(\Sigma^{oo})=0,\quad
  \mbox{and}\quad H_{0,q}(\Sigma^{oo})=H_{0,q}(X).$$
\end{exa}

\begin{exa}\label{ex:torus bd curve}
  Suppose that $X$ is a compact trivalent tropical curve of genus $g$. Then
  Corollary~\ref{cor:torus bd} gives the
  following tropical Hodge diamond for $\Sigma^{oo}$ 
  (as in the introduction, by convention, $h_{0,0}$ is the topmost number and $h_{2,0}$ the lefmost):
$$\begin{array}{ccccc}
  & & 1 & &
\\  & g+\varepsilon & & \phantom{+} g\phantom{\varepsilon} &
\\g  & & g +\varepsilon & & 0
\\  & 1 & & 0 &
\\  & &0 & &
\end{array} $$
where $\varepsilon=0$ if  the  
first Chern class of $\Sigma_-$ does not vanish, and  $\varepsilon=1$ if
it does. Note that this example corrects a small mistake in \cite[Lemma
  4.3ii)]{Sha13}.
\end{exa}

\subsection{Tropical homology of birational tropical modifications} \label{sec:hom bir modif}
The method we used in Section~\ref{sec:bundle} also  allows the 
 computation of tropical homology of a birational tropical
 modification of a  tropical variety.
 Recall that $ \pi_-:\mathbb T\widetilde \Delta_{n}^i\to E_-$ is a
 $\TP^1$-bundle over $E_-=\TP^{n-1}$.
As in Section \ref{sec:bundle}, we denote 
 by $u_-$ the  primitive integer vector generating the kernel of
$d\pi_-$ and pointing away from $E_-$. 
If $Y$ is  a tropical variety in $\mathbb T\widetilde \Delta_{n}^i$,
we denote by $Y_\pm$ its intersection with the divisor $E_\pm$,  and by
$Y_+^p$ the tropical variety $\pi_-(Y_+)$.
\begin{defi}
A  tropical variety $Y$ in $\mathbb T\widetilde \Delta_{n}^i$ is
called a \emph{birational tropical
modification} of $X\subset E_-$ along the divisor $Y_--Y_+^p$ if
$Y\cap \R^n$
 is a birational tropical
 modification of $X\cap \R^{n-1}$ along the divisor
 $(Y_--Y_+^p)\cap \R^{n-1}$, and if $Y$ is the topological closure of $Y\cap\R^n$ in
 $\mathbb T\widetilde \Delta_{n}^i$.

 If $Y_+=\emptyset$, then $Y$ is called a tropical modification of $X$
 along the divisor $Y_-$.
\end{defi}
Given such a birational tropical
modification $Y$ of $X$, we still denote by $\pi_-$ the restriction
of $\pi_-$ to $Y$. 
We emphasise 
that in the next proposition, it is not assumed that the tropical
prevariety $Y_-\cap Y_+^p$ is a tropical variety
(recall that a
 tropical variety is defined as the set-theoretic intersection of some
 tropical varieties, see \cite[Section 3]{RGJS05}).
\begin{lemma}\label{lem:hom bir modif}
  Let $Y\subset \mathbb T\widetilde \Delta_{n}^i$ be a birational tropical
  modification of $X\subset E_-$  along the divisor $Y_--Y_+^p$.
  Then for any pair $(p,q)$, one has
  $$H_{p,q}(Y)\simeq H_{p,q}(X) \times H_{p-1,q-1}(Y_-\cap Y_+^p). $$
\end{lemma}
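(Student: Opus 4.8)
The plan is to imitate the Mayer--Vietoris / vertical-chain strategy already used in Lemmas~\ref{lem:line bd} and \ref{lem:proj bd}, adapting it to the non-complete $\TP^1$-bundle underlying a birational modification. Recall that $Y$ is built from three pieces: the graph $\Gamma_f(X)$ sitting over $X\subset E_-$, together with the two half-infinite ``wings'' $\Gamma_f(Y_-)^-$ and $\Gamma_f(Y_+^p)^+$ coming from the positive and negative parts of the divisor. The key geometric observation is that $\pi_-$ realises $Y$ as a family over $X$ whose generic fibre is a single point (where $f$ is linear), but whose fibre over a point of $Y_-$ (resp.\ $Y_+^p$) contains an extra downward (resp.\ upward) ray. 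First I would set up the decomposition $Y = Y^{\mathrm{aff}} \cup Y^{-} \cup Y^{+}$, where $Y^{-}$ and $Y^{+}$ are neighbourhoods of the two wings and $Y^{\mathrm{aff}}$ retracts onto $\Gamma_f(X)\cong X$.

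The core of the argument would be a vertical-chain analysis exactly parallel to Lemma~\ref{lem:proj bd}. I would declare a $(p,q)$-cell of $Y$ to be \emph{vertical} if $\pi_-$ drops its dimension, and argue that any $(p,q)$-chain is homologous to the sum of a chain supported on $\iota_-(X)$ and a vertical chain, using that $\Gamma_f(X)$ deformation-retracts onto $X$. The crucial point, distinguishing this from the closed bundle case, is that a vertical cell can only be closed when its vertical ray is \emph{two-sided}, i.e.\ when it lies over a point belonging to \emph{both} $Y_-$ and $Y_+^p$ --- that is, over $Y_-\cap Y_+^p$. Over points of $Y_-\setminus Y_+^p$ the vertical ray is a half-line going off to $-\infty$ in the modification chart and hence carries no homology (its boundary escapes to the non-compact end), and symmetrically for $Y_+^p\setminus Y_-$. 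Thus a vertical cycle must have framing divisible by $u_-$ and must project to a cycle in $Y_-\cap Y_+^p$; this produces, via the Gysin-type map $\kappa$, the factor $H_{p-1,q-1}(Y_-\cap Y_+^p)$, while the remaining chains account for the factor $H_{p,q}(X)$.

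Concretely I would establish surjectivity of $(\iota_{-*},\kappa)\colon H_{p,q}(X)\times H_{p-1,q-1}(Y_-\cap Y_+^p)\to H_{p,q}(Y)$ by the chain-splitting above, and injectivity by the same bookkeeping applied to a bounding chain $\gamma$: if $\iota_{-*}(\sigma')+\kappa(\sigma'')=\partial\gamma$, decompose $\gamma=\gamma_{\mathrm{aff}}+\gamma_v$ into affine and vertical parts, observe $\kappa(\sigma'')=\partial\gamma_v=\kappa(\partial\gamma_0)$ forces $\sigma''$ null-homologous in $Y_-\cap Y_+^p$, and then $\iota_{-*}(\sigma')=\partial\gamma_{\mathrm{aff}}$ forces $\sigma'$ null-homologous in $X$. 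Throughout I would use that cellular and singular tropical homology agree (\cite[Proposition 2.2]{MikZha14}) so that I may choose the polyhedral subdivision of $Y$ compatible with a fixed subdivision of $X$.

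The main obstacle I anticipate is the careful treatment of the two asymmetric ends. In the closed $\TP^1$-bundle of Lemma~\ref{lem:proj bd} both ends $X_-,X_+$ are genuine compact divisors and the argument that ``no vertical chain is closed'' is clean; here the modification has genuinely non-compact vertical directions, and I must argue precisely why a vertical ray contributes to homology exactly when it is two-sided over $Y_-\cap Y_+^p$, and why the one-sided rays over $Y_-\setminus Y_+^p$ and $Y_+^p\setminus Y_-$ do not. This requires understanding how the coefficient sheaves $\mathcal F_p$ behave at the ``bend'' where the graph meets a wing, i.e.\ that $u_-$ genuinely divides the framing of a closed vertical cell precisely over the overlap locus. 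The fact that $Y_-\cap Y_+^p$ need not itself be a tropical variety (only a tropical prevariety, as the statement warns) means I cannot invoke balancing on it, so the divisibility-by-$u_-$ argument must be purely chain-level and local, carried out cell by cell rather than by a global structural result.
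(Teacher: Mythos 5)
Your geometric picture is right---the new classes come from full vertical fibres over $Y_-\cap Y_+^p$, and the Gysin map $\kappa$ accounts for the factor $H_{p-1,q-1}(Y_-\cap Y_+^p)$---but there is a genuine gap at the other factor: your map $\iota_{-*}\colon H_{p,q}(X)\to H_{p,q}(Y)$ is not defined. Unlike the situation of Lemma~\ref{lem:proj bd}, where $X$ sits inside $\Sigma$ as the honest tropical subvariety $X_-=\Sigma\cap E_-$, in a birational modification there is no copy of $X$ inside $Y$: the intersection $Y\cap E_-$ is the divisor $Y_-$, not $X$, and the graph $\Gamma_f(X)$ is only a \emph{homeomorphic} copy. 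The graph embedding fails to be integer-affine precisely along the divisor, where $df$ jumps by multiples of $u_-$; consequently, when a $(p,q)$-cycle $\sigma$ of $X$ crosses $Y_-\cup Y_+^p$, its cell-by-cell lift to the graph is \emph{not} a cycle---its boundary is a nonzero chain supported over $Y_-\cup Y_+^p$ with framing divisible by $u_-$. For the same reason, chains supported on $\Gamma_f(X)$ do not compute $H_{p,q}(X)$ (the sheaves $\F_p$ of $Y$ restricted to the graph are strictly larger than those of $X$ along the divisor), and the topological deformation retraction of $Y$ onto the graph cannot be invoked, since tropical homology is not invariant under such retractions. The paper's proof spends essentially all of its effort repairing exactly this point: it constructs a chain-level section $s$ of the pushforward $\pi_{-*}\colon H_{p,q}(Y)\to H_{p,q}(X)$ by lifting each cell via the inverse $\tau$ of $\pi_-$ on the locus where $\pi_-$ is a bijection, observing that the boundary defect $\partial\tau_*(\sigma)$ is vertical (framing divisible by $u_-$, supported over $\pi_-^{-1}(Y_-\cup Y_+^p)$), hence bounds a vertical chain $\sigma_v$, and setting $s(\sigma)=\tau_*(\sigma)-\sigma_v$. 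This corrected lift is what plays the role of your $\iota_{-*}$; being a section of $\pi_{-*}$, it is automatically injective, and only then does the rest of the argument proceed as in Lemma~\ref{lem:proj bd}, as you propose.

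A secondary imprecision: you argue that one-sided vertical rays (say over $Y_-\setminus Y_+^p$) carry no homology because their boundary ``escapes to the non-compact end''. But the lemma concerns the compactification $Y\subset\mathbb T\widetilde\Delta_{n}^i$, in which those rays close up on the boundary divisor $E_-$ and are compact segments. The correct mechanism is sheaf-theoretic: the framing $u_-$ dies in $\F_p$ at points of the boundary strata $E_\pm$ but survives at the endpoint on the graph, so a one-sided vertical segment has nonzero boundary there; only the two-sided fibres, with both endpoints on $E_\pm$, close up. This is the conclusion you wanted, but it must be reached by the local framing analysis you yourself flagged as the delicate step, not by a non-compactness argument.
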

\begin{proof}
  Since all tropical varieties involved are compact,
  we choose their polyhedral
subdivision induced by their tropical structure.
  The map $\pi_-:Y\to X$ induces a map on the  chain groups
  $$\pi_{- *} : C_q(Y, \F_p) \to   C_q(X, \F_p)$$
  that commutes with the boundary map.
  We denote by
 $\widetilde Y$ the union of all
faces of $Y$ on which  $d\pi_-$ is
injective, i.e. $\widetilde Y$ is the union of faces of $Y$ 
 on which the restriction of $\pi_-$ is a bijection.
We denote by $\tau$ the inverse map of $\pi_{-|\widetilde Y}$.

\medskip
  We start by constructing a  section $s$ of the map
  $\pi_{- *} : H_{p,q}(Y) \to   H_{p,q}(X)$.
  Given a $(p,q)$-cell $\sigma$  in $X$, choose a facet $F_\sigma$ of
  $X$ containing the support of $\sigma$. Then $\sigma$
  induces via $\tau|_{F_\sigma}$ a
  $(p,q)$-cell $\tau_*(\sigma)$  in $Y$.
  Note that a different choice (if any) of $F_\sigma$ gives rise to a different
  $(p,q)$-chain,  differing from $\tau_*(\sigma)$ by a framing divisible
  by $u_-$; this will not be important in what follows.
  By linearity,
  we
  obtain a linear map
    $$\tau_{ *} : C_q(X, \F_p) \to   C_q(Y, \F_p).$$
  If $\sigma$ is a $(p,q)$-cycle in $X$, then by construction
$\partial \tau_*(\sigma)$ 
  has support contained in
$\pi_-^{-1}(Y_-\cup Y_+^p)$
  and has a framing divisible by $u_-$.
Hence $\partial \tau_*(\sigma)$ is the boundary in $Y$ of a vertical $(p,q)$-chain $ \sigma_v$, and
  we define
  $$ s(\sigma)=\tau_*(\sigma) - \sigma_v. $$
  The map $s$ is a section of the map $\pi_*$, in particular it is
  injective. In the rest of the proof we identify $H_{p,q}(X)$ and its
  image by $s$ in $H_{p,q}(Y)$.

  \medskip
  Next,  the same construction than the
  construction of the tropical Gysin map in Section \ref{sec:bundle}
  provides a linear map
  $$ \kappa : H_{p-1,q-1}(Y_- \cap Y_+^p) \to   H_{p,q}(Y).$$
  With a proof analogous to the proof in
  Lemma \ref{lem:proj bd} that the map $\iota_+\times \kappa$ is an
  isomorphism, we obtain that
   the linear map
  $s\times \kappa: H_{p,q}(X) \times H_{p-1,q-1}(Y_-\cap Y_+^p)\to
  H_{p,q}(Y)$ is also an isomorphism. 
\end{proof}

Applying Lemma \ref{lem:hom bir modif} in the particular case when
$Y_+$ is empty, we recover the result by Shaw that  tropical homology
groups are invariant 
under tropical modifications.
\begin{cor}[Shaw, {\cite[Theorem 4.13]{Sha15}}]
  Let $Y\subset \mathbb T\widetilde \Delta_{n+1}^i$ be a tropical
  modification of $X\subset E_-$.
  Then for any pair $(p,q)$, the linear map
  $$\pi_{- *}:H_{p,q}(Y)\longrightarrow H_{p,q}(X) $$
  is an isomorphism.
\end{cor}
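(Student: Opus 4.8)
The plan is to deduce this statement directly from Lemma~\ref{lem:hom bir modif}, by treating it as the special case of that lemma in which the divisor $Y_+$ is empty. First I would observe that when $Y \subset \mathbb T\widetilde\Delta_{n+1}^i$ is a tropical modification of $X$ (rather than a genuinely birational one), the defining condition is precisely $Y_+ = \emptyset$. Consequently $Y_+^p = \pi_-(Y_+) = \emptyset$, and therefore the tropical prevariety $Y_-\cap Y_+^p$ along which the modification is performed is empty. Since the tropical homology of the empty set vanishes, we get $H_{p-1,q-1}(Y_-\cap Y_+^p) = 0$ for every pair $(p,q)$.

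Feeding this into the conclusion of Lemma~\ref{lem:hom bir modif} yields an abstract isomorphism $H_{p,q}(Y) \simeq H_{p,q}(X)$ for every $(p,q)$. The one point deserving a word of justification is that this isomorphism is realised by the map $\pi_{-*}$ named in the statement, and not by some other identification. For this I would recall the structure of the isomorphism built in the proof of Lemma~\ref{lem:hom bir modif}: it is the map $s \times \kappa$, where $s \colon H_{p,q}(X) \to H_{p,q}(Y)$ is a section of $\pi_{-*}$ and $\kappa$ is the tropical Gysin map defined on $H_{p-1,q-1}(Y_-\cap Y_+^p)$. Since here the domain of $\kappa$ is zero, the isomorphism collapses to $s$ alone.

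It then remains only to read off the conclusion. The map $s \colon H_{p,q}(X) \to H_{p,q}(Y)$ is an isomorphism and, by its very construction in Lemma~\ref{lem:hom bir modif}, satisfies $\pi_{-*} \circ s = \mathrm{id}_{H_{p,q}(X)}$; hence $\pi_{-*}$ is a two-sided inverse of $s$ and is itself an isomorphism, which is exactly the claim. I expect no real obstacle here: all the substance resides in Lemma~\ref{lem:hom bir modif}, and the corollary is merely its specialisation to $Y_+ = \emptyset$, the only bookkeeping being that the second factor disappears and that the surviving half of the isomorphism is precisely the section of $\pi_{-*}$.
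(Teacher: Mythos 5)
Your proposal is correct and takes essentially the same route as the paper, which likewise obtains this corollary by specialising Lemma~\ref{lem:hom bir modif} to the case $Y_+=\emptyset$, so that $Y_-\cap Y_+^p=\emptyset$ and the Gysin factor vanishes. Your additional bookkeeping remark --- that the surviving isomorphism is the section $s$ of $\pi_{-*}$, whence $\pi_{-*}=s^{-1}$ is itself an isomorphism --- is exactly the point left implicit in the paper's one-line deduction.
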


Any tropical linear space of dimension $m$ in $\TP^n$
is obtained from  $\TP^m$ by a finite sequence of
tropical modifications along linear tropical divisors, hence they have
the same tropical Hodge diamond. There are many ways to compute tropical
homology groups of $\TP^m$ (see for example
\cite[Example 7.27]{BIMS15} and \cite[Corollary 2]{IKMZ}),  with which 
we obtain the following well-known statement. 
\begin{cor}\label{cor:hom linear}
  Let $L$ be a tropical linear space of dimension $m$ in 
  $\TP^n$. Then one has
  $$h_{p,p}(L)=1 \ \ \forall  p=0,1, \ldots, n,
  \qquad\mbox{and}\qquad h_{p,q}(L)=0 \mbox{  otherwise}. $$
\end{cor}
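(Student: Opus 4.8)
The plan is to reduce the computation to the case $L=\TP^m$ via the invariance of tropical homology under tropical modifications, and then to invoke a known description of the tropical Hodge numbers of $\TP^m$. First I would recall the fact highlighted in the Examples of Section~\ref{sec:hom bir modif}: any tropical linear space $L$ of dimension $m$ in $\TP^n$ is obtained from $\TP^m$ by a finite sequence of tropical modifications along tropical linear divisors. This is precisely the projective closure of the assertion, used in the proof of Proposition~\ref{Thm:function}, that fan tropical linear spaces arise from $\R^m$ by successive modifications along linear spaces of one dimension less, together with its extension to non-fan linear spaces.

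Next I would propagate tropical homology along this sequence. Applying the preceding corollary (Shaw, \cite[Theorem 4.13]{Sha15}) at each step, every such modification induces isomorphisms $\pi_{-*}:H_{p,q}(\,\cdot\,)\xrightarrow{\ \sim\ }H_{p,q}(\,\cdot\,)$ for all pairs $(p,q)$. Composing these isomorphisms along the chain of modifications relating $L$ to $\TP^m$, I conclude that $L$ and $\TP^m$ carry the same tropical Hodge diamond, so that it suffices to compute $h_{p,q}(\TP^m)$.

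Finally I would record the tropical Hodge numbers of $\TP^m$. By \cite[Corollary 2]{IKMZ} these coincide with the Hodge numbers of the non-singular complex variety $\CP^m$, or one may compute them directly as in \cite[Example 7.27]{BIMS15}; in either case one obtains $h_{p,p}(\TP^m)=1$ for every $p\in\{0,1,\ldots,m\}$ and $h_{p,q}(\TP^m)=0$ otherwise. Transporting this back through the isomorphisms of the previous paragraph yields the stated diamond for $L$.

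I do not expect a genuine obstacle here, since the statement is assembled entirely from results already established in the excerpt. The only points requiring care are bookkeeping ones: making sure that every intermediate modification in the chain connecting $L$ to $\TP^m$ falls within the class covered by Shaw's invariance corollary (modifications along \emph{linear} tropical divisors suffice), and correctly identifying the range of non-vanishing as $0\le p\le m=\dim L$ rather than $0\le p\le n$, since $H_{p,q}(L)$ vanishes for $p>\dim L$ by dimension reasons.
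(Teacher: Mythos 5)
Your proposal is correct and follows essentially the same route as the paper: reduce to $\TP^m$ via a chain of tropical modifications along linear tropical divisors, apply Shaw's invariance result (the corollary of Lemma~\ref{lem:hom bir modif}) at each step, and compute $h_{p,q}(\TP^m)$ via \cite[Example 7.27]{BIMS15} or \cite[Corollary 2]{IKMZ}. Your closing remark on bookkeeping is also apt: the non-vanishing range is indeed $0\le p\le m=\dim L$, so the ``$p=0,1,\ldots,n$'' in the statement should be read as $p=0,1,\ldots,m$.
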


\subsection{Back to tropical surfaces}

Now we specialise results from Sections \ref{sec:bundle} and
\ref{sec:hom bir modif} to the case of floor
composed tropical surfaces. 
Throughout the whole section, we
consider the family of tropical curves $(C_{k,d})_{d\ge 1}$ in
$\TP^{k+1}$ we constructed in
Theorem~\ref{thm:main curve}, and the tropical plane $L_k$ which
contains them. 
We denote by $\Sigma_{k,d}$ the $\TP^1$-bundle over $C_{k,d}$ in 
$\mathbb T\widetilde \Delta_{k+2}^1$, and by $L_{k,d,d-1}$ the
birational tropical modification of $L_k$ along $C_{k,d} -
C_{k,d-1}$. 
\begin{lemma}\label{lem:remove boundary modif}
For any integer $k\ge 1$ and $d\ge 2$,  the tropical Hodge diamond of
  $L_{k,d,d-1}^{o}= L_{k,d,d-1}\setminus E_+$  is the following 
  $$\begin{array}{ccccc}
  & & 1 & &
\\  & 0 & & 0 &
\\ k\cdot g(C_{1,d-1}) & & k\cdot\big[d\cdot(d-1)+
g(C_{1,d-1})\big]+(k-1)\cdot (2d-3)  &\phantom{h_{2,1}(X)} & 0
\\  & k-1 & & 0 &
\\  & &0 & &
  \end{array} $$ 
Furthermore, both natural maps
$H_{2,0}(\Sigma_{k,d-1}^{oo})\to H_{2,0}(L_{k,d,d-1}^{o})$ and
$H_{1,1}(\Sigma_{k,d-1}^{oo})\to H_{1,1}(L_{k,d,d-1}^{o})$
are  injective.
\end{lemma}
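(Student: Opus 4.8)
The plan is to recover $L^o_{k,d,d-1}$ from a Mayer--Vietoris decomposition tailored to the boundary divisor $E_+$, and to feed into it the tropical homology of the three building blocks computed in Sections~\ref{sec:bundle} and~\ref{sec:hom bir modif}. Write $L=L_{k,d,d-1}$ for the compact birational modification, and let $N_+$ be a small tropical neighbourhood of $E_+\cap L$ in $L$, so that $L=L^o_{k,d,d-1}\cup N_+$. Near $E_+$ the only sheets of $L$ are the upward walls $\Gamma_f(C_{k,d-1})^+$ together with the poles of $\Gamma_f(L_k)$ over $C_{k,d-1}$; these assemble into the top part of the $\TP^1$-bundle $\Sigma_{k,d-1}$, so that $N_+$ is a tropical line bundle over $C_{k,d-1}$ and $L^o_{k,d,d-1}\cap N_+=\Sigma^{oo}_{k,d-1}$. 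This inclusion is exactly what provides the natural maps appearing in the final assertion, and the tropical Mayer--Vietoris sequence of \cite{Sha13} applies to $(L;L^o_{k,d,d-1},N_+)$ with intersection $\Sigma^{oo}_{k,d-1}$.

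First I would assemble the inputs. By Lemma~\ref{lem:line bd} one has $H_{p,q}(N_+)\simeq H_{p,q}(C_{k,d-1})$, where $C_{k,d-1}$ is a compact connected tropical curve of genus $g:=k\cdot g(C_{1,d-1})=g(C_{k,d-1})$. By Example~\ref{ex:chern surface} the first Chern class of the line bundle over $C_{k,d-1}$ equals $(d-1)$ times the class of a point, hence is nonzero, so Corollary~\ref{cor:torus bd} computes $H_{p,q}(\Sigma^{oo}_{k,d-1})$ exactly as in Example~\ref{ex:torus bd curve} with $\varepsilon=0$. Finally $L$ is a birational tropical modification of the tropical plane $L_k$ along $C_{k,d}-C_{k,d-1}$, so Lemma~\ref{lem:hom bir modif} gives $H_{p,q}(L)\simeq H_{p,q}(L_k)\oplus H_{p-1,q-1}(C_{k,d}\cap C_{k,d-1})$; here $H_{p,q}(L_k)$ is read off Corollary~\ref{cor:hom linear}, and the tropical homology of $C_{k,d}\cap C_{k,d-1}$ is extracted from its description in Theorem~\ref{thm:main curve} as a configuration of transverse points and $k-1$ segments.

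With the three diamonds in hand, I would run the Mayer--Vietoris long exact sequence in each bidegree $(p,q)$ and solve for $H_{p,q}(L^o_{k,d,d-1})$. The bidegrees $p=0$ and $p=2$ are essentially forced and should yield $h_{0,0}=1$, $h_{2,0}=g$, $h_{2,1}=k-1$ and the vanishing entries. For the two injectivity statements, note that $H_{2,0}(N_+)=0$ and that the classes of $\Sigma^{oo}_{k,d-1}$ in bidegrees $(2,0)$ and $(1,1)$ are tropical Gysin (vertical) classes, which push forward to $0$ under the projection $\Sigma^{oo}_{k,d-1}\to N_+\simeq C_{k,d-1}$. Hence, once one checks that the connecting homomorphisms $H_{2,1}(L)\to H_{2,0}(\Sigma^{oo}_{k,d-1})$ and $H_{1,2}(L)\to H_{1,1}(\Sigma^{oo}_{k,d-1})$ vanish, the corresponding Mayer--Vietoris maps into the direct sums are injective, and the vanishing of the $N_+$-component upgrades this to injectivity of $\iota^o_*$ into $H_{2,0}(L^o_{k,d,d-1})$ and $H_{1,1}(L^o_{k,d,d-1})$ respectively.

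The hard part will be bidegree $(1,1)$, and more precisely the local analysis near $E_+$ along the $k-1$ segments shared by $C_{k,d}$ and $C_{k,d-1}$. Over such a segment the fibre of the modification is a full $\TP^1$ reaching both $E_-$ and $E_+$, so neither the picture of $N_+$ as a line bundle over the smooth locus of $C_{k,d-1}$ nor the naive treatment of $C_{k,d}\cap C_{k,d-1}$ (feeding Lemma~\ref{lem:hom bir modif}) is adequate there: the multitangent sheaf $\F_\bullet$ along these one-dimensional strata must be handled with care. I expect precisely this correction to produce the extra summand $(k-1)\cdot(2d-3)$ in $h_{1,1}$ — a useful consistency check being that it vanishes for $k=1$, where a straightforward count already closes. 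Carrying out this segment-by-segment bookkeeping, and then matching it through the Mayer--Vietoris sequence, is the delicate step; the remaining entries of the diamond then follow by exactness.
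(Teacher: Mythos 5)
Your decomposition is exactly the one the paper uses: $L_{k,d,d-1}=L^o_{k,d,d-1}\cup N_+$ with $N_+$ a neighbourhood of the copy of $C_{k,d-1}$ at $E_+$, intersection $\Sigma^{oo}_{k,d-1}$, and inputs from Lemma~\ref{lem:line bd}, Lemma~\ref{lem:hom bir modif}, Corollary~\ref{cor:torus bd} (with $\varepsilon=0$ via Example~\ref{ex:chern surface}), Corollary~\ref{cor:hom linear} and Theorem~\ref{thm:main curve}; your treatment of the $H_{1,1}$-injectivity (vanishing connecting map plus vanishing of the $N_+$-component on Gysin classes) is also the paper's. The first genuine gap is the step you defer: the vanishing of the connecting homomorphism $H_{2,1}(L_{k,d,d-1})\to H_{2,0}(\Sigma^{oo}_{k,d-1})\simeq H_{1,0}(C_{k,d-1})$. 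This is not formal: by Lemma~\ref{lem:hom bir modif} one has $H_{2,1}(L_{k,d,d-1})\simeq\R^{k-1}\neq 0$, generated by the $k-1$ segments of $C_{k,d}\cap C_{k,d-1}$, so the bidegree $p=2$ entries are \emph{not} ``essentially forced''. Without this vanishing you obtain neither $h_{2,1}(L^o_{k,d,d-1})=k-1$ nor $h_{2,0}(L^o_{k,d,d-1})=k\cdot g(C_{1,d-1})$ (only inequalities), nor the injectivity of $H_{2,0}(\Sigma^{oo}_{k,d-1})\to H_{2,0}(L^o_{k,d,d-1})$. The paper's argument is geometric and relies on the gluing construction of Theorem~\ref{thm:main curve}: the image of any such cycle is supported on disconnecting (bridge) edges of $C_{k,d-1}$, hence vanishes in $H_{1,0}(C_{k,d-1})$.

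Second, the ``hard part'' you anticipate in bidegree $(1,1)$ does not exist, and pursuing it would derail the computation. Lemma~\ref{lem:hom bir modif} needs no correction along the $k-1$ segments: the paper states it precisely so that $Y_-\cap Y_+^p$ may be a prevariety, and the segments enter exactly through $H_{0,0}$ and $H_{1,0}$ of $C_{k,d}\cap C_{k,d-1}$. Once this is accepted, everything is arithmetic: $h_{1,1}(L_{k,d,d-1})=1+b_0\big(C_{k,d}\cap C_{k,d-1}\big)=k\big[1+d(d-1)\big]-2(k-1)(d-1)$, and your own exact sequence in bidegree $(1,1)$ (using $h_{1,0}(L^o_{k,d,d-1})=0$ and $h_{1,2}(L_{k,d,d-1})=0$) forces $h_{1,1}(L^o_{k,d,d-1})=h_{1,1}(L_{k,d,d-1})+g(C_{k,d-1})-1=k\big[d(d-1)+g(C_{1,d-1})\big]-(k-1)(2d-3)$, with a \emph{minus} sign. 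The $+(k-1)(2d-3)$ in the statement is a typo in the paper: the proof of Proposition~\ref{prop:rec hom fd} and the final formula of Theorem~\ref{thm:main surface} both substitute the value with $-(k-1)(2d-3)$. A segment-by-segment ``correction'' engineered to reach the plus sign would overshoot the Mayer--Vietoris answer by $2(k-1)(2d-3)$ and be inconsistent with the rest of the paper; note also that your consistency check at $k=1$ cannot detect this, since both signs agree there.
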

\begin{proof}
The case $p=0$ is clear since
$H_{0,q}(L_{k,d,d-1}^{o})=H_q(L_{k,d,d-1}^{o};\R)$ and that $L_{k,d,d-1}^{o}$
is contractible.
The non-vanishing tropical Hodge numbers of a segment in $\R^n$ are
precisely $h_{0,0}=h_{1,0}=1$.
Hence $L_{k,d,d-1}$ has the following tropical Hodge diamond by Lemma
\ref{lem:hom bir modif} and Theorem \ref{thm:main curve}
  $$\begin{array}{ccccc}
  & & 1 & &
\\  & 0 & & 0 &
\\ 0 & &k\cdot\big[1 + d\cdot(d-1)\big] -2(k-1)\cdot(d-1)  &\phantom{k-1} & 0
\\  & k-1 & & 0 &
\\  & &1 & &
  \end{array} $$ 
The first Chern class of $\Sigma_{k,d}$ is non-null
by Example \ref{ex:chern surface}.
We consider the decomposition of $L_{k,d,d-1}$ into the union of
$L_{k,d,d-1}^o$ and of a connected and simply connected neighbourhood of
$C_{k,d-1}$ in $L_{k,d,d-1}$.
The Mayer-Vietoris sequence together with Lemma
\ref{lem:line bd} and Example \ref{ex:torus bd curve} give that
$H_{1,2}(L_{k,d,d-1}^{o}) =0$, and 
the
 following long exact sequences
\begin{equation}\label{eq:MV q=2}
  0\longrightarrow H_{2,2}(L_{k,d,d-1}^{o}) \longrightarrow H_{2,2}(L_{k,d,d-1})
  \longrightarrow H_{1,1}(C_{k,d-1})\longrightarrow H_{2,1}(L_{k,d,d-1}^{o}) \longrightarrow
\end{equation}
$$\longrightarrow
H_{2,1}(L_{k,d,d-1})\longrightarrow H_{1,0}(C_{k,d-1})
\longrightarrow  H_{2,0}(L_{k,d,d-1}^{o}) \longrightarrow 0
   $$
and
\begin{equation}\label{eq:MV q=1}
  0\longrightarrow  H_{0,1}(C_{k,d-1}) \longrightarrow
  H_{1,1}(C_{k,d-1})\times H_{1,1}(L_{k,d,d-1}^{o}) 
  \longrightarrow H_{1,1}(L_{k,d,d-1})
\longrightarrow 
\end{equation}
$$   \longrightarrow H_{1,0}(C_{k,d-1})\longrightarrow H_{1,0}(C_{k,d-1})\times H_{1,0}(L_{k,d,d-1}^{o}) \longrightarrow
  0  $$
The map $H_{2,2}(L_{k,d,d-1})\to H_{1,1}(C_{k,d-1})$ is an isomorphism,
so we obtain $H_{2,2}(L_{k,d,d-1}^{o}) =0$ from
$(\ref{eq:MV q=2})$.
Next, the map $H_{2,1}(L_{k,d,d-1}) \to H_{1,0}(C_{k,d-1})$ is the zero map, since the
support of the image of any cycle is contained in disconnecting edges
of $C_{k,d-1}$. Hence we obtain
statement concerning
$H_{2,1}(L_{k,d,d-1}^{o})$ and $H_{2,0}(L_{k,d,d-1}^{o})$ 
 from
$(\ref{eq:MV q=2})$.

 The map $H_{1,0}(C_{k,d-1})\to H_{1,0}(C_{k,d-1})\times H_{1,0}(L_{k,d,d-1}^{o})$ is the identity
on the first factor, so we obtain from $(\ref{eq:MV q=1})$ the
statements about $h_{1,0}(L_{k,d,d-1}^{o})$ and $h_{1,1}(L_{k,d,d-1}^{o})$.
Since the map $H_{0,1}(C_{k,d-1}) \to H_{1,1}(C_{k,d-1})$ is the zero map, we obtain
the injectivity of the map
$H_{1,1}(\Sigma_{k,d}^{oo})\to H_{1,1}(L_{k,d,d-1})$
from $(\ref{eq:MV q=1})$.
\end{proof}

For simplicity, we denote by $(X_{k,d})_{d\ge 1}$ rather than $(X_{2,k,d})_{d\ge 1}$ 
the family of
floor composed tropical surfaces constructed in the proof of Theorem
\ref{Thm:HigherConst}
out of the family $(C^{k}_d)_{d\ge  1}$ of tropical curves contained
in the tropical plane $L_k$.
Since the  tropical surface $X_{k,1}$ is a tropical plane, it
has the following tropical Hodge diamond by Corollary~\ref{cor:hom linear}:
$$\begin{array}{ccccc}
  & & 1 & &
\\  & 0 & & 0 &
\\ 0 &
&1  & & 0
\\  & 0 & & 0 &
\\  & &1 & &
\end{array} $$

\begin{prop}\label{prop:rec hom fd}
For any integers $k\ge 1$ and $d\ge 2$, the tropical surface $X_{k,d}$ has the following
tropical Hodge diamond
$$\begin{array}{ccccc}
  & & 1 & &
\\  & 0 & &\qquad 0  \qquad&
\\ h_{2,0}(X_{k,d-1})+k\cdot g(C_{1,d-1}) &
&h_{1,1}(X_{k,d}) & & h_{0,2}(X_{k,d-1}) +k\cdot g(C_{1,d-1})
\\  & h_{2,1}(X_{k,d-1}) +k-1 & & 0 &
\\  & &1 & &
\end{array} $$
where
$$h_{1,1}(X_{k,d})=h_{1,1}(X_{k,d-1}) + k\cdot\big[d\cdot (d-1)
  +2g(C_{1,d-1})-1\big] -2(k-1)\cdot(d-2).$$
Furthermore for any $d\ge 1$, the natural map $H_{2,1}(X_{k,d}) \to H_{1,0}(C_{k,d}) $ is the zero map.
\end{prop}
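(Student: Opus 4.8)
The plan is to argue by induction on $d$, carrying the ``furthermore'' assertion along as part of the inductive statement so that it is available as a hypothesis at each stage. The base case $d=1$ is the tropical plane $X_{k,1}$, whose Hodge diamond is recorded just above and follows from Corollary~\ref{cor:hom linear}. For the inductive step I would exploit the floor structure: $X_{k,d}$ is obtained from $X_{k,d-1}$ by attaching one extra floor, whose closure is exactly the birational modification $L_{k,d,d-1}$ of $L_k$ along $C_{k,d}-C_{k,d-1}$, glued to the lower part along a neighbourhood of the shared divisor $C_{k,d-1}$. Concretely, I would cover $X_{k,d}$ by an open set retracting to $L^o_{k,d,d-1}$ and an open set retracting to (an open version of) $X_{k,d-1}$, whose intersection retracts to the $\mathbb T^\times$-bundle $\Sigma_{k,d-1}^{oo}$ over $C_{k,d-1}$.

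I would then feed this cover into the tropical Mayer--Vietoris long exact sequence \cite[Proposition 4.2]{Sha13}, applied for each fixed first index $p$. All three sets of input data are already available: the Hodge diamond of $L^o_{k,d,d-1}$ from Lemma~\ref{lem:remove boundary modif}, the Hodge diamond of $\Sigma_{k,d-1}^{oo}$ from Example~\ref{ex:torus bd curve} (with $\varepsilon=0$, since by Example~\ref{ex:chern surface} the relevant first Chern class equals $d-1\neq 0$), and the diamond of $X_{k,d-1}$ from the induction hypothesis. Reading the sequence degree by degree produces every Hodge number of $X_{k,d}$, provided one can pin down the ranks of the connecting homomorphisms.

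Determining those ranks is the crux, and this is exactly where the auxiliary statements enter. The injectivity of $H_{2,0}(\Sigma_{k,d-1}^{oo})\to H_{2,0}(L^o_{k,d,d-1})$ and of $H_{1,1}(\Sigma_{k,d-1}^{oo})\to H_{1,1}(L^o_{k,d,d-1})$ from Lemma~\ref{lem:remove boundary modif} forces the maps out of the overlap to be injective in the relevant bidegrees, so the sequence splits as needed; on the other side, the ``furthermore'' clause of the induction hypothesis, that $H_{2,1}(X_{k,d-1})\to H_{1,0}(C_{k,d-1})$ vanishes, kills the competing connecting map coming from the lower piece. A subtlety I would have to handle with care is that the open lower piece retracts to $X_{k,d-1}$ with its top boundary divisor removed rather than to the compact surface itself; to phrase the recursion in terms of the compact Hodge numbers I would run the analogous ``capping'' Mayer--Vietoris decomposition along $C_{k,d-1}$, precisely as in the proof of Lemma~\ref{lem:remove boundary modif}, and combine the two coupled sequences. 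I expect this reconciliation of the open and compact versions, together with the bookkeeping across the two long exact sequences, to be the main obstacle; once it is settled, the individual Hodge numbers and the recursion for $h_{1,1}(X_{k,d})$ should fall out of routine dimension counts.

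Finally, the ``furthermore'' assertion for $X_{k,d}$ is proved by the same geometric mechanism as its counterpart in Lemma~\ref{lem:remove boundary modif}: the image in $H_{1,0}(C_{k,d})$ of any $(2,1)$-cycle is supported on the disconnecting edges of $C_{k,d}$, and such a class is null because disconnecting edges lie on no cycle of the underlying graph. This both closes the diamond and supplies the vanishing needed to launch the next induction step.
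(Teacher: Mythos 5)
Your plan is correct and follows essentially the same route as the paper's proof: the same induction on $d$ carrying the vanishing of $H_{2,1}(X_{k,d})\to H_{1,0}(C_{k,d})$ as part of the inductive statement, the same Mayer--Vietoris decomposition into the new floor (retracting to $L^o_{k,d,d-1}$) and the lower open piece, with overlap the $\mathbb{T}^\times$-bundle over $C_{k,d-1}$, and the same use of Lemma~\ref{lem:remove boundary modif}, Example~\ref{ex:torus bd curve}, and the disconnecting-edges argument. The ``reconciliation'' step you flag is resolved exactly as you propose: the paper recomputes the diamond of $X^o_{k,d-1}$ by rerunning the capping computation of Lemma~\ref{lem:remove boundary modif}, using the inductive vanishing hypothesis.
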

\begin{proof}
  Since $H_{2,1}(X_{k,1})=0$, the map
  $H_{2,1}(X_{k,1}) \to  H_{1,0}(C_{k,1}) $ is clearly the zero map.
  We do not compute the tropical Hodge numbers with $q=0$ here, since they
  correspond to Betti numbers and have already been  computed  in Theorem~\ref{Thm:Betti}. 
We denote by $X_{k,d}^{o}$ the tropical surface $X_{k,d}$ from which we remove
the copy of the curve $C_{k,d}$ located on the boundary.
Let $d\ge 2$, and suppose that the proposition is true for $d-1$.
Since the map $H_{2,1}(X_{k,d-1}) \to H_{1,0}(C_{k,d-1}) $ is the zero map, 
by the same computation performed in the proof of Lemma
\ref{lem:remove boundary modif} we obtain that $X_{k,d-1}^o$ has the
following tropical Hodge diamond
$$\begin{array}{ccccc}
  & & 1 & &
\\  & 0 & & 0 &
\\ h_{2,0}(X_{k,d-1})+g(C_{k,d-1}) & &h_{1,1}(X_{k,d-1})-1+g(C_{k,d-1})  &\phantom{h_{2,1}(X)}  & h_{0,2}(X_{k,d-1})
\\  &  h_{2,1}(X_{k,d-1}) & & 0 &
\\  & &0 & &
\end{array} $$
We consider the same decomposition of $X_{k,d}$ as 
 in the
proof of Proposition \ref{Thm:Betti}.
By the Mayer-Vietoris theorem together with Lemmas
\ref{lem:line bd} and  \ref{lem:remove boundary modif} and Example \ref{ex:torus bd curve}, we obtain 
that $h_{1,0}(X_{k,d})=0$, and the
 following long exact sequences
\begin{equation}\label{eq:MVS q=2}
  0 \longrightarrow H_{2,2}(X_{k,d})
  \longrightarrow H_{1,1}(C_{k,d-1})\longrightarrow
  H_{2,1}(L_{k,d,d-1}^{o})\times H_{2,1}(X_{k,d-1}^{o}) \longrightarrow
  H_{2,1}(X_{k,d})\longrightarrow
\end{equation}
$$\longrightarrow H_{1,0}(C_{k,d-1}) \longrightarrow
  H_{2,0}(L_{k,d,d-1}^{o})\times H_{2,0}(X_{k,d-1}^{o})\longrightarrow H_{2,0}(X_{k,d}) \longrightarrow 0
   $$
and
\begin{equation}\label{eq:MVS q=1}
  0\longrightarrow   H_{1,2}(X_{k,d})
  \longrightarrow H_{0,1}(C_{k,d-1}) \longrightarrow
  H_{1,1}(L_{k,d,d-1}^{o})\times H_{1,1}(X_{k,d-1}^{o}) 
  \longrightarrow
\end{equation}
$$\longrightarrow   H_{1,1}(X_{k,d})
\longrightarrow H_{1,0}(C_{k,d-1})\longrightarrow 0 $$
The map $H_{2,2}(X_{k,d})\to H_{1,1}(C_{k,d})$ is clearly an
isomorphism. Furthermore the map
$H_{1,0}(C_{k,d-1}) \longrightarrow H_{2,0}(L_{k,d,d-1}^{o})\times H_{2,0}(X_{k,d-1}^{o})$ is
injective
by Lemma \ref{lem:remove boundary modif}, hence we obtain from
$(\ref{eq:MVS q=2})$ that
\begin{align*}
  h_{2,1}(X_{k,d}) &= h_{2,1}(L_{k,d,d-1}^{o})+ h_{2,1}(X_{k,d-1}^{o})
\\  &= h_{2,1}(X_{k,d-1}) +k-1
\end{align*}
and
\begin{align*}
  h_{2,0}(X_{k,d}) &=  h_{2,0}(X_{k,d-1}^{o})
\\  &= h_{2,0}(X_{k,d-1}) +g(C_{k,d-1}).
\end{align*}
The map
$H_{0,1}(C_{k,d-1}) \longrightarrow H_{1,1}(L_{k,d,d-1}^{o})\times H_{1,1}(X_{k,d-1}^{o})$ is
injective
by Lemma \ref{lem:remove boundary modif}, hence we obtain from
$(\ref{eq:MVS q=1})$ that $h_{1,2}(X_{k,d})=0$ and
\begin{align*}
  h_{1,1}(X_{k,d}) &= h_{1,1}(L_{k,d,d-1}^o)+h_{1,1}(X_{k,d-1}^o)
  \\&=  h_{1,1}(X_{k,d-1}) + k\cdot\big[d\cdot (d-1)+g(C_{1,d-1})\big]
  -(k-1)\cdot(2d-3) -1 +g(C_{k,d-1})
  \\  &= h_{1,1}(X_{k,d-1}) + k\cdot\big[d\cdot (d-1)
  +2g(C_{1,d-1})-1\big] -2(k-1)\cdot(d-2).
\end{align*}
With the exact same proof of Lemma \ref{lem:remove boundary modif}, we
obtain that  the natural map
$H_{2,1}(L_{k,d,d-1}) \to H_{1,0}(C_{k,d}) $ is the
zero map.  Hence
the map
$$H_{2,1}(X_{k,d})= H_{2,1}(L_{k,d,d-1}) \times H_{2,1}(X_{k,d-1}) \longrightarrow H_{1,0}(C_{k,d}) $$
is  the zero map, since the above Mayer-Vietoris sequence also
implies  that the map
$H_{2,1}(X_{k,d-1}) \to H_{1,0}(C_{k,d}) $ is the zero map.
\end{proof}

\subsection{Proof of Theorem \ref{thm:main surface}}
We prove the theorem by choosing $X=X_{k,d}$, and by computing its tropical
homology groups recursively on $d$ using Proposition
\ref{prop:rec hom fd}.  The theorem holds for $k=1$ by \cite{Sha13}, and so
 for all numbers $h_{p,q}(X)$ with $(p,q)\ne(1,1)$. Since we have
$$h_{1,1}(X_{k,1})=k\cdot h_{1,1}^\C(1,2) -(k-1),$$
we obtain
\begin{align*}
  h_{1,1}(X)&=k\cdot h_{1,1}^\C(d,2) - (k-1)\cdot(d-1)\cdot(d-2)-(k-1)
  \\&=h^{\C}_{1,1}(d,2) +\frac{(k-1)\cdot(d-1)\cdot(2d^2-7d+9)}{3}
  \end{align*}
as announced.
\hfill\BasicTree[1.3]{orange!97!black}{green!90!white}{green!50!white}{leaf}

\bibliographystyle{alpha}
\bibliography{biblio}

\begin{thebibliography}{BLdM12}

\bibitem[AH97]{Aharony}
O.~Aharony and A.~Hanany.
\newblock Branes, superpotentials and superconformal fixed points.
\newblock {\em Nuclear Phys. B}, 504(1-2):239--271, 1997.

\bibitem[All09]{All09}
L.~Allermann.
\newblock Chern classes of tropical vector bundles.
\newblock arXiv:0911.2909, 2009.

\bibitem[AR10]{AlRa1}
L.~Allermann and J.~Rau.
\newblock First steps in tropical intersection theory.
\newblock {\em Mathematische Zeitschrift}, 264:633--670, 2010.

\bibitem[Ber71]{Berg71}
G.~M. Bergman.
\newblock The logarithmic limit-set of an algebraic variety.
\newblock {\em Trans. Amer. Math. Soc.}, 157:459--469, 1971.

\bibitem[BG84]{BiGr}
R.~Bieri and J.~Groves.
\newblock The geometry of the set of characters induced by valuations.
\newblock {\em J. Reine Angew. Math.}, 347:168--195, 1984.

\bibitem[BIMS15]{BIMS15}
E.~Brugall{\'e}, I.~Itenberg, G.~Mikhalkin, and K.~Shaw.
\newblock Brief introduction to tropical geometry.
\newblock In {\em Proceedings of the {G}\"okova {G}eometry-{T}opology
  {C}onference 2014}, pages 1--75. G\"okova Geometry/Topology Conference (GGT),
  G\"okova, 2015.

\bibitem[BLdM12]{BrLop}
E.~Brugall\'e and L.~Lopez~de Medrano.
\newblock Inflection points of real and tropical plane curves.
\newblock {\em Journal of Singularities}, 3:74--103, 2012.

\bibitem[BM]{Br6}
E.~Brugall\'e and G.~Mikhalkin.
\newblock Floor decompositions of tropical curves : the general case.
\newblock http://erwan.brugalle.perso.math.cnrs.fr/articles/FDn/FDGeneral.pdf.

\bibitem[BM07]{BruMikh07}
Erwan Brugall{\'e} and Grigory Mikhalkin.
\newblock Enumeration of curves via floor diagrams.
\newblock {\em C. R. Math. Acad. Sci. Paris}, 345(6):329--334, 2007.

\bibitem[BM09]{BruMikh08}
Erwan Brugall{\'e} and Grigory Mikhalkin.
\newblock Floor decompositions of tropical curves: the planar case.
\newblock In {\em Proceedings of {G}\"okova {G}eometry-{T}opology {C}onference
  2008}, pages 64--90. G\"okova Geometry/Topology Conference (GGT), G\"okova,
  2009.

\bibitem[BM16]{Br18}
E.~Brugall\'e and H.~Markwig.
\newblock Deformation of tropical {H}irzebruch surfaces and enumerative
  geometry.
\newblock {\em {J}ournal of {A}lgebraic {G}eometry}, 25(4):633--702, 2016.

\bibitem[BS15]{Br17}
E.~Brugall{\'e} and K.~Shaw.
\newblock Obstructions to approximating tropical curves in surfaces via
  intersection theory.
\newblock {\em Canad. J. Math.}, 67(3):527--572, 2015.

\bibitem[CM16]{CuetoMarkwig}
A.~Cueto and H.~Markwig.
\newblock How to repair tropicalizations of plane curves using modifications.
\newblock {\em Experimental Mathematics}, 25(2):130--164, 2016.

\bibitem[DG17]{DavGri17}
A.~Davydow and D.~Grigoriev.
\newblock Bounds on the number of connected components for tropical
  prevarieties.
\newblock {\em Discrete Comput. Geom.}, 57(2):470--493, 2017.

\bibitem[EKL06]{EKL06}
M.~Einsiedler, M.~Kapranov, and D.~Lind.
\newblock Non-{A}rchimedean amoebas and tropical varieties.
\newblock {\em J. Reine Angew. Math.}, 601:139--157, 2006.

\bibitem[GKZ94]{GKZ}
I.~M. Gelfand, M.~M. Kapranov, and A.~V. Zelevinsky.
\newblock {\em Discriminants, resultants, and multidimensional determinants}.
\newblock Mathematics: Theory \& Applications. Birkh\"auser Boston Inc.,
  Boston, MA, 1994.

\bibitem[Har81]{Har81}
J.~Harris.
\newblock A bound on the geometric genus of projective varieties.
\newblock {\em Ann. Scuola Norm. Sup. Pisa Cl. Sci. (4)}, 8(1):35--68, 1981.

\bibitem[HK12]{HelKat12}
D.~Helm and E.~Katz.
\newblock Monodromy filtrations and the topology of tropical varieties.
\newblock {\em Canad. J. Math.}, 64(4):845--868, 2012.

\bibitem[IKMZ18]{IKMZ}
I.~Itenberg, L.~Katzarkov, G.~Mikhalkin, and I.~Zharkov.
\newblock Tropical homology.
\newblock {\em Math. Annalen}, 2018.

\bibitem[KS12]{Katz2}
E.~Katz and A.~Stapledon.
\newblock Tropical geometry and the motivic nearby fiber.
\newblock {\em Compos. Math.}, 148(1):269--294, 2012.

\bibitem[KSW17]{KSW16}
L.~Kastner, K.~Shaw, and A.-L. Winz.
\newblock Cellular sheaf cohomology in polymake.
\newblock In Sturmfels~B. Smith~G., editor, {\em Combinatorial Algebraic
  Geometry}, volume~80 of {\em Fields Institute Communications}. Springer,
  2017.

\bibitem[Mik04a]{Mik8}
G.~Mikhalkin.
\newblock Amoebas of algebraic varieties and tropical geometry.
\newblock In {\em Different faces of geometry}, volume~3 of {\em Int. Math.
  Ser. (N. Y.)}, pages 257--300. Kluwer/Plenum, New York, 2004.

\bibitem[Mik04b]{Mik12}
G.~Mikhalkin.
\newblock Decomposition into pairs-of-pants for complex algebraic
  hypersurfaces.
\newblock {\em Topology}, 43(6):1035--106, 2004.

\bibitem[Mik05]{Mik1}
G.~Mikhalkin.
\newblock {Enumerative tropical algebraic geometry in $\mathbb R^2$}.
\newblock {\em J. Amer. Math. Soc.}, 18(2):313--377, 2005.

\bibitem[Mik06]{Mik06}
G.~Mikhalkin.
\newblock Tropical geometry and its applications.
\newblock In {\em International {C}ongress of {M}athematicians. {V}ol. {II}},
  pages 827--852. Eur. Math. Soc., Z\"urich, 2006.

\bibitem[Mil64]{Mil64}
J.~Milnor.
\newblock On the {B}etti numbers of real varieties.
\newblock {\em Proc. Amer. Math. Soc.}, 15:275--280, 1964.

\bibitem[MR19]{MikRau19}
Grigory Mikhalkin and Johannes Rau.
\newblock {\em Tropical Geometry}.
\newblock https://www.math.uni-tuebingen.de/user/jora/downloads/main.pdf.
  draft, 2019.

\bibitem[MZ08]{Mik6}
G.~Mikhalkin and I.~Zharkov.
\newblock Tropical curves, their {J}acobians and theta functions.
\newblock In {\em Curves and abelian varieties}, volume 465 of {\em Contemp.
  Math.}, pages 203--230. Amer. Math. Soc., Providence, RI, 2008.

\bibitem[MZ14]{MikZha14}
G.~Mikhalkin and I.~Zharkov.
\newblock Tropical eigenwave and intermediate {J}acobians.
\newblock In {\em Homological mirror symmetry and tropical geometry}, volume~15
  of {\em Lect. Notes Unione Mat. Ital.}, pages 309--349. Springer, Cham, 2014.

\bibitem[RGST05]{RGJS05}
J.~Richter-Gebert, B.~Sturmfels, and T.~Theobald.
\newblock First steps in tropical geometry.
\newblock In {\em Idempotent mathematics and mathematical physics}, volume 377
  of {\em Contemp. Math.}, pages 289--317. Amer. Math. Soc., Providence, RI,
  2005.

\bibitem[Sha94]{Sha}
I.~R. Shafarevich.
\newblock {\em Basic algebraic geometry. 1}.
\newblock Springer-Verlag, Berlin, second edition, 1994.

\bibitem[Sha13a]{Sha13}
K.~Shaw.
\newblock Tropical {$(1,1)$}-homology for floor decomposed surfaces.
\newblock In {\em Algebraic and combinatorial aspects of tropical geometry},
  volume 589 of {\em Contemp. Math.}, pages 329--350. Amer. Math. Soc.,
  Providence, RI, 2013.

\bibitem[Sha13b]{Sha13-2}
K.~Shaw.
\newblock A tropical intersection product in matroidal fans.
\newblock {\em SIAM J. Discrete Math.}, 27(1):459--491, 2013.

\bibitem[Sha15]{Sha15}
K.~Shaw.
\newblock Tropical surfaces.
\newblock arXiv:1506.07407, 2015.

\bibitem[Vir01]{V9}
O.~Ya. Viro.
\newblock Dequantization of real algebraic geometry on logarithmic paper.
\newblock In {\em European Congress of Mathematics, Vol. I (Barcelona, 2000)},
  volume 201 of {\em Progr. Math.}, pages 135--146. Birkh\"auser, Basel, 2001.

\bibitem[Yu14]{Yu14}
Tony~Yue Yu.
\newblock The number of vertices of a tropical curve is bounded by its area.
\newblock {\em Enseign. Math.}, 60(3-4):257--271, 2014.

\end{thebibliography}
\end{document}